\newtheorem{thm}{Theorem}
\newtheorem{prop}{Proposition}
\newcommand{\ma}{\mathcal{M}}
\newcommand{\fl}{\mathcal{F}}
\newcommand{\gr}{\mathcal{G_M}}
\newcommand{\Mon}{\mathrm{Mon}}
\newcommand{\oo}{\mathcal{O}}
\newcommand{\Orb}{\mathrm{Orb}}
\newcommand{\Tr}{\mathrm{Tr}}
\newcommand{\Le}{\mathrm{Le}}
\newcommand{\A}{\mathcal{A}}
\begin{document}

%\baselineskip=17pt

%\titlerunning{Truncation symmetry type graphs}

\title{Truncation symmetry type graphs}

\author{Mar\'ia del R\'io Francos} %\\

\address{\small Institute of Mathematics Physics and Mechanics, 
University of Ljubljana, Slovenia, Jadranska 19, Ljubljana 1000, Slovenia\\
	    {\tt maria.delrio@fmf.uni-lj.si}
}

\date{}

\footnote{
{\bf Keywords.} Map, flag graph, symmetry type graph, truncation, leapfrog.\\
{\em Mathematics Subject Classification (2010): }52B15, 05C10, 57M15, 51M20, 52B10
}

\maketitle

\begin{abstract}
There are operations that transform a map $\ma$ (an embedding of a graph on a surface) into another map in the same surface, modifying its structure and consequently its set of flags $\fl(\ma)$. 
For instance, by truncating all the vertices of a map $\ma$, each flag in $\fl(\ma)$ is divided into three flags of the truncated map. Orbani\'c, Pellicer and Weiss studied the truncation of $k$-orbit maps for $k \leq 3$. 
They introduced the notion of $T$-compatible maps in order to give a necessary condition for a truncation of a $k$-orbit map to be either $k$-, $\frac{3k}{2}$- or $3k$-orbit map.
Using a similar notion, by introducing an appropriate partition on the set of flags of the maps, we extend the results on truncation of $k$-orbit maps for $k \leq 7$ and $k=9$.\\

\end{abstract}

\section{Introduction.}

From the topological point of view, we define a map by a cellular embedding of a connected graph on a surface. The most symmetric maps are those for which its group of automorphisms acts transitively on its elements, called flags. This type of maps are well known under the name of regular (or reflexible) maps. 
Highly symmetric maps, as the regular and chiral (2-orbit maps with maximum degree of symmetry by rotation) ones, have been extensively studied, \cite{McMullen,arp,Daniel-chiral}. 
%These are maps and polytopes with either maximum degree of symmetry or maximum degree of symmetry by rotation.
Little is known about maps that are neither regular nor chiral. 
Duarte and Hubard studied in \cite{rui} and \cite{2Polyh}, respectively, all seven types of 2-orbit maps, in different contexts. %; we follow \cite{2Polyh} for the notation of the types of the symmetry type maps with two flag orbits.
%In \cite{tesisisa} Hubard gives a complete characterisation of the automorphism groups of 2-orbit and fully-transitive polyhedra (i.e. polyhedra transitive on vertices, edges and faces) in terms of distinguished generators. %Moreover, she finds generators of the automorphism group of a 2-orbit polytope of any given rank.

Combinatorially, a map is completely described by a cubic edge-coloured graph, as Vince describes it in \cite{CombMaps}. Such graph is known as the {\em flag graph} of the map. In 1982, Lins proposed an equivalent description to a flag graph in \cite{Lins}. Other interesting characteristics regarding the flag graph are described in \cite{ColorfulPolytopes}.

Furthermore, each map has associated a graph obtained as the quotient of its flag graph, under the action of the group of automorphisms of the map, determining what we call the {\em  symmetry type graph} of the map. In \cite{MedSymTypeGph} are described some properties of these symmetry type graphs. A strategy of how to generate symmetry type graphs is shown in \cite{CompSymTypeGraph}. Dress and Huson refer to such graphs as the Delaney-Dress symbol, \cite{DressHuson87}. Dress and Brinkmann give an application to mathematical chemistry in \cite{DrBr96}.
Orbani\'c, Pellicer, Pisanski and Tucker (2011), show the 14 symmetry type graphs of edge-transitive maps, \cite{Edge-trans}.
Edge-transitive maps were studied in \cite{edge-trans} by Siran, Tucker and Watkins. 
Such maps have either 1, 2 or 4 orbits of flags under the action of the automorphism group.
%More recently Orbani\'c, Pellicer and Weiss extended this study and classified $k$-orbit maps (maps with $k$ orbits of flags under the automorphism group) up to $k\leq4$ in \cite{k-orbitM}.

A map can be transformed into another map by applying an operation on it. The truncation by the vertices of the map is an example of such operation. Even though a map and its truncated map are different to each other, the truncated map inherits the symmetries from the original map. If the vertices of a map $\ma$ are truncated up to the midpoint of its edges, then the resulting map is called the medial map of $\ma$. Moreover, when the truncation goes further than the midpoint on the edges of the map $\ma$, then the obtained map is known as the leapfrog map of $\ma$, \cite{Leapfrog}. Furthermore, if the vertices of $\ma$ become the faces of the truncated map, then the resulting map is the dual map of $\ma$. 
For instance, by truncating regular polyhedra, we can obtain some of the 13 Archimedean solids as is discussed in \cite{RegPoly} and \cite{ArchimSolids}. %, Su\'arez, Gancedo, \'Alvarez and Mor\'an show how to 

The truncation map $\Tr(\ma)$, of a map $\ma$, can be described by a division of the fundamental triangles on the surface that represent the flags of $\ma$, \cite{k-orbitM}. 
Consequently, we can define a transformation from the flag graph of $\ma$ to the flag graph of its truncation map. 
According to such transformation of the flags of $\fl(\ma)$ to define the set of flags $\fl(\Tr(\ma))$ of $\Tr(\ma)$, we follow a local arrangement of the flags in the truncated map, in addition to certain necessary conditions on $\ma$ and $\Tr(\ma)$, 
Orbani\'c, Pellicer and Weiss showed the results on truncation of regular, 2-orbit and 3-orbit maps, stated as Theorem \ref{orbitsTr(M)} and Propositions \ref{2-orbitTr}, \ref{3-orbitTr_2-orbitM} and \ref{3-orbitTr} in this paper. % \cite{k-orbitM}. 
The goal of this paper is to extend these results on truncation of $k$-orbit maps for $k \leq 7$ and $k=9$, due to space, we leave on aside the truncation of 8-orbit maps. For this extension we will use the same local arrangement of flags used in \cite{k-orbitM}.

The paper is divided into five sections. In Section \ref{sec:Maps}, a map and its flag graph are defined along with their properties. The monodromy group of a map and its action on the set of flags are defined as in \cite{MonGp_Self-Inv}. In Section \ref{sec:STG} the symmetry type graph of a map is defined as a quotient of its flag graph, \cite{MedSymTypeGph}. In Section \ref{sec:TruncLeap} are defined operations on maps such as the dual, truncation, and the composition of these two. The Section \ref{subsec:Trunc}, related to the truncation operation, is divided into 5 subsections where are analized the results in \cite{k-orbitM} (on truncation of $k$-orbit maps) in addition with the extension of these results, given in a series of Propositions (\ref{4-orbTr}--\ref{9-orbTr(9)}). Then, we obtain a classification of the possible symmetry type graphs for the truncation of $k$-orbit maps, with $k\leq 7$ and $k=9$ (Table \ref{Tr(M)}). Later, in Section \ref{subsec:Lp}, are defined the two-dimensional subdivision of a map and the leapfrog map, obtaining a classification of the possible symmetry type graphs for the leapfrog of $k$-orbit maps, with $k\leq 7$ and $k=9$ (Table \ref{Le(M)}).

\section{Maps.}
\label{sec:Maps}

A {\em map} $\ma$ is a 2-cell embedding of a connected graph on a surface. The vertices and edges of the map $\ma$ are those from the embedded graph, and the faces of $\ma$ are homeomorphic to an open disc, such that each edge of a face is incident with at most two faces of $\ma$. 

Consider the barycentric subdivision $\mathcal{B}$ of the faces of $\ma$. Let $\Phi$ be a triangle in the subdivision $\mathcal{B}$, we shall label the vertices of $\Phi$ by $\Phi_0$, $\Phi_1$ and $\Phi_2$ if these correspond to a vertex, edge and face mutually incident in $\ma$. 
%As it can be seen from Figure \ref{FlagMap}, e
Each triangle $\Phi$ is adjacent to exactly three other that differ on either a vertex, an edge or a face of the map.
There are exactly four triangles around each edge of $\ma$.  
Define a {\em flag} $\Phi$ of the map $\ma$ as an ordered triple $(\Phi_0,\Phi_1,\Phi_2)$ of a vertex, edge and face mutually incident  in $\ma$, and denote as $\fl(\ma)$ the set of all flags of $\ma$.
%\begin{figure}[htbp]
%\begin{center}
%%\vspace{-1.5cm}
%\includegraphics[width=4.5cm]{FlagMap.pdf}
%%\vspace{-2cm}
%\caption{Barycentric subdivision of $\ma$ and the flag $\Phi =(\Phi_0,\Phi_1,\Phi_2) \in \fl(\ma)$.}
%\label{FlagMap}
%\end{center}
%\end{figure}
Hence, every flag $\Phi\in\fl(\ma)$ is adjacent to three other flags, called {\em $i$-adjacent} flags of $\Phi$, which corresponding triples differ on the entry $\Phi_i$ from $\Phi$, and  are denoted by $\Phi^i$, with $i=0,1,2$. In other words, the flags of $\ma$ are related to the faces of $\mathcal{B}$. The set of all flags of $\ma$ is denoted by $\fl(\ma)$.

To each map $\ma$ corresponds a subgroup of the permutation group $Sym(\fl(\ma))$, generated by three permutations $s_0, s_1, s_2$ such that for each flag $\Phi \in \fl(\ma)$ the generator $s_i$ acts on the right as follows
                             $$\Phi^{s_i} = \Phi \cdot s_i = \Phi^i,$$
with $i=0,1,2$. 
Note that $(\Phi^i)^i=\Phi$, and that $\Phi^{0,2}=\Phi^{2,0}$ (since each edge in $\ma$ has exactly four triangles around it). Then, it follows that the generators $s_0, s_1, s_2$ are fixed-point free involutions in $Sym(\fl(\ma))$, such that $s_0s_2=s_2s_0$ is also a fixed-point free involution of the flags of $\ma$. 
%It is straightforward to see that 
Moreover, the involutions $s_0, s_1, s_2$ generate a subgroup, $\Mon(\ma)$, of $Sym(\fl(\ma))$ known as the {\em monodromy (or connection)} group of the map $\ma$, \cite{MonGp_Self-Inv}. Since maps are connected, the action of $\Mon(\ma)$ on $\fl(\ma)$ is transitive.

Given a map $\ma$ we can construct a graph $\gr$ in the following way. %, as in  Figure \ref{FlagGraphMap}. 
%\begin{figure}[htbp]
%\begin{center}
%%\vspace{-1.5cm}
%\includegraphics[width=6cm]{FlagGraphMap.pdf}
%%\vspace{-2cm}
%\caption{Local representation of the flag graph $\gr$ of a map $\ma$.}
%\label{FlagGraphMap}
%\end{center}
%\end{figure}
The vertex set of $\gr$ is $\fl(\ma)$ and its edges are labelled by the subscripts of the generators of $\Mon(\ma)$ in a natural way. In other words, two vertices $\Phi, \Psi \in \fl(\ma)$ are adjacent by an edge of colour $i=0,1,2$ in $\gr$, if and only if $\Phi^{s_i} = \Psi$ in $\ma$. The graph $\gr$ is known as the flag graph of the map $\ma$. 
Moreover, if $\Phi, \Psi \in \fl(\ma)$ are two vertices of $\gr$, then a walk along the edges $i_0, i_1, \dots, i_t$ that starts at $\Phi$ and ends at $\Psi$ is a word $w = s_{i_0}s_{i_1} \cdots s_{i_t} \in \Mon(\ma)$, defining inductively the action of $\Mon(\ma)$ on the set $\fl(\ma)$ as follows
$$\Phi^w =  (\Phi^{i_0})\cdot{s_{i_1} \cdots {s_{i_t}}} = \Phi^{i_0. i_1, \dots, i_t} =\Psi.$$

Observe that in $\gr$ the edges of a given colour form a perfect matching (an independent set of edges containing all the vertices of the graph), and the union of two sets of edges of different colour is a subgraph whose components are even cycles; such subgraph is known as a {\em 2-factor} of $\gr$. In particular, note that since $(s_0s_2)^2 = 1$ and $s_0s_2$ is fixed-point free, the cycles with edges of alternating colours 0 and 2 are all of length four and these 4-cycles define the set of edges of $\ma$. 
In other words, the edges of $\ma$ can be identified with the orbits of $\fl(\ma)$ under the action of the subgroup generated by the involutions $s_0$ and $s_2$; that is, $E(\ma)= \{ \Phi^{\langle s_0,s_2 \rangle} \mid \Phi \in \fl(\ma) \}$.

Similarly, we find that the vertices and faces of $\ma$ are identified with the respective orbits of the subgroups $\langle s_1,s_2 \rangle$ and $\langle s_0,s_1 \rangle$ on $\fl(\ma)$. That is, $V(\ma)= \{ \Phi^{\langle s_1,s_2 \rangle}  \mid \Phi \in \fl(\ma)  \}$ and $F(\ma)= \{ \Phi^{\langle s_0,s_1 \rangle}  \mid \Phi \in \fl(\ma) \}$.
Thus, the group $\langle s_0, s_1, s_2 \rangle$ induces a transitive action on the set of all vertices, edges and faces of $\ma$.

Let $\Gamma(\ma)$ denote the group of automorphisms of the map $\ma$. An {\em automorphism} $\gamma$ of $\ma$ is a bijection between the vertices, edges and faces of $\ma$ that preserves the map. In fact, every $\gamma\in\Gamma(\ma)$ ``commutes'' with the distinguished generators $s_0, s_1, s_2$ of $\Mon(\ma)$, i.e. $\Phi^{s_i}\gamma=(\Phi\gamma)^{s_i}$, \cite{MonGp_Self-Inv}. It is easy to see that the only automorphism that fixes any flag is the identity one. Therefore, the action of $\Gamma(\ma)$ on $\fl(\ma)$ is free (semi-regular) and partitions $\fl(\ma)$ into orbits of the same size. If there are $k$ orbits of flags under the action of $\Gamma(\ma)$, then $\ma$ is said to be a {\em $k$-orbit map}. Moreover, if $\Gamma(\ma)$ acts transitively on $\fl(\ma)$, then $\ma$ is called a {\em regular} map.

As the automorphism group $\Gamma(\ma)$ of $\ma$ acts on the vertices, edges and faces of $\ma$ preserving their incidence, every automorphism of $\ma$ induces a bijection between the flags of $\ma$ preserving their adjacencies. Consequently $\Gamma(\ma)$ is isomorphic to the edge-colour preserving automorphism group of $\gr$, which consists of all the automorphisms of $\gr$ that send two vertices adjacent by an edge of colour $i$ into other two vertices adjacent by an edge of the same colour $i$, for each $i=0,1,2$.

\section{Symmetry type graph.}
\label{sec:STG}

Recall that the automorphism group $\Gamma(\ma)$ of the map $\ma$ partitions the set of flags $\fl(\ma)$ of $\ma$ into orbits of the same size. 
Let $\Orb(\ma) = \{\oo_{\Phi}|\Phi\in\fl(\ma)\}$ denote the set of all the orbits of $\fl(\ma)$ under the action of $\Gamma(\ma)$. 

From the flag graph $\gr$ we obtain a quotient graph $T(\ma)$ given by the action of $\Gamma(\ma)$ on $\gr$ in the following way. The vertices of $T(\ma)$ are the orbits of $\fl(\ma)$ under $\Gamma(\ma)$ and two orbits $\oo_{\Phi}, \oo_{\Psi} \in \Orb(\ma)$ are adjacent by an edge of colour $i=0,1,2$ if and only if there are flags $\Phi' \in \oo_{\Phi}$ and $\Psi' \in \oo_{\Psi}$ such that they are $i$-adjacent in $\gr$; note that this adjacency is well-defined.
The graph $T(\ma)$ might have parallel edges (edges with the same end-points) or also semi-edges (edges with a single end-point), but $T(\ma)$ has no loops. We refer to the graph $T(\ma)$ as the {\em symmetry type graph} of $\ma$, and we also say that $\ma$ is a $k$-orbit map of type $T$. For instance, the symmetry type graph of a regular map is a graph with a single vertex and three semi-edges, one of each colour 0, 1 and 2. If $\ma$ is a $k$-orbit map, then $T(\ma)$ is a cubic graph with exactly $k$ vertices and the edges are properly coloured with three colours. 

For every $w\in\Mon(\ma)$ and $\Phi\in\fl(\ma)$ the action of $\Mon(\ma)$ on the set $\Orb(\ma)$ is defined as $\oo_{\Phi}\cdot w=\oo_{\Phi^w}$. This action is transitive, as is the action of $\Mon(\ma)$ on $\fl(\ma)$. Since $\gr$ is a connected graph, then its corresponding symmetry type graph $T(\ma)$ is connected as well. 
 The number of types of $k$-orbit maps is the number of connected cubic graphs with $k$ vertices, properly three edge-coloured, where the colours 0 and 2 are as in the Figure \ref{2-factors(0,2)}.
\begin{figure}[htbp]
\begin{center}
\vspace{0.5cm}
\includegraphics[width=11cm]{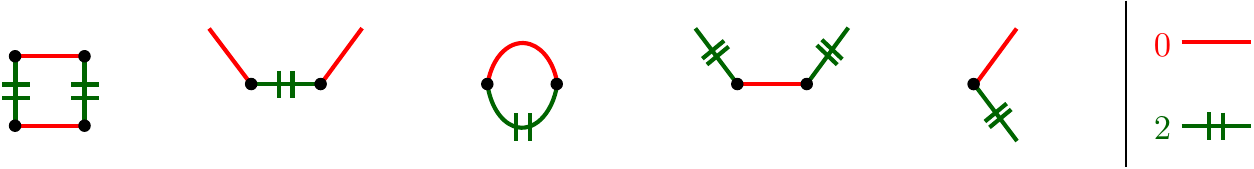}
\vspace{0.5cm}
\caption{Possible quotients of 0-2 coloured 4-cycles of $\gr$.}
\label{2-factors(0,2)}
\end{center}
\end{figure}

%\newpage
 In \cite{MedSymTypeGph} are shown all symmetry type graphs for $k\leq5$, along with more properties of these graphs, such as the following Theorem.

%\newpage
\begin{thm}
\label{face-orbT(M)}
Let $\ma$ be a map with symmetry type graph $T(\ma)$. Then, the number of connected components in the 2-factor of $T(\ma)$ of colours $i$ and $j$, with $i,j \in \{0,1,2\}$ and $i \neq j$, is the same as the number of orbits of the $l$-faces of $\ma$, where $l \in \{0,1,2\}$ and $l \neq i,j$. 
\end{thm}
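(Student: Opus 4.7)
The plan is to prove the equality by viewing both sides as counting the same thing: orbits of the product action of $\Gamma(\ma)$ and $\langle s_i, s_j\rangle$ on $\fl(\ma)$. The bridge between them is the ``commutation'' relation $\Phi^{s_i}\gamma=(\Phi\gamma)^{s_i}$ noted in Section \ref{sec:Maps}, which guarantees that $\Gamma(\ma)$ permutes the $\langle s_i,s_j\rangle$-orbits of $\fl(\ma)$ and, conversely, that the generators $s_i,s_j$ induce well-defined permutations on the set $\Orb(\ma)$ of $\Gamma(\ma)$-orbits.

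First I would unpack the left-hand side. By the definition of $T(\ma)$ in Section \ref{sec:STG}, its vertex set is $\Orb(\ma)$ and its edges of colour $i$ encode the $s_i$-adjacency of orbits. Hence the 2-factor of colours $i$ and $j$ in $T(\ma)$ is the subgraph whose connected components are precisely the orbits of the action of $\langle s_i,s_j\rangle$ (viewed as a permutation group on $\Orb(\ma)$ induced by the commutation relation) on $\Orb(\ma)$. So the number of components of this 2-factor equals the number of $\langle s_i,s_j\rangle$-orbits on $\Orb(\ma)$.

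Next I would unpack the right-hand side using the identifications recalled in Section \ref{sec:Maps}: the set of $l$-faces of $\ma$ (with $\{i,j,l\}=\{0,1,2\}$) is in bijection with the set of $\langle s_i,s_j\rangle$-orbits on $\fl(\ma)$, and $\Gamma(\ma)$ acts on $l$-faces by its induced action on these orbits. Hence the number of orbits of $l$-faces under $\Gamma(\ma)$ equals the number of $\Gamma(\ma)$-orbits on the set of $\langle s_i,s_j\rangle$-orbits of $\fl(\ma)$.

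The final step is to observe that these two counts coincide. Using the commutation relation, the relation on $\fl(\ma)$ given by ``$\Phi\sim\Psi$ iff $\Psi=\Phi^w\gamma$ for some $w\in\langle s_i,s_j\rangle$ and $\gamma\in\Gamma(\ma)$'' is a well-defined equivalence, and its equivalence classes may be computed by first quotienting by $\langle s_i,s_j\rangle$ and then by $\Gamma(\ma)$, or vice versa. Both orders yield the same partition and therefore the same number of classes. The routine but essential check here is that commutation makes both induced actions well defined, i.e.\ that $\Gamma(\ma)$ sends $\langle s_i,s_j\rangle$-orbits to $\langle s_i,s_j\rangle$-orbits and that $s_i,s_j$ descend to $\Orb(\ma)$; I do not expect any genuine obstacle beyond making this double-quotient argument rigorous.
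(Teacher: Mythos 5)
Your argument is correct: the commutation relation $\Phi^{s_i}\gamma=(\Phi\gamma)^{s_i}$ does make both induced actions well defined, the components of the $(i,j)$-coloured 2-factor of $T(\ma)$ are exactly the $\langle s_i,s_j\rangle$-orbits on $\Orb(\ma)$, the $l$-faces are exactly the $\langle s_i,s_j\rangle$-orbits on $\fl(\ma)$, and the double-quotient count closes the loop. Note that the paper itself gives no proof of this theorem --- it is imported from \cite{MedSymTypeGph} as a known property of symmetry type graphs --- so there is nothing internal to compare against, but your commuting-actions argument is the natural and expected one.
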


The notation for the symmetry type graphs with $k$ vertices is followed from \cite{MedSymTypeGph} as well.

%For the purpose of this paper we need to consider the following definitions. 
%Let $T'$ denote the type of a $k'$-orbit map $\ma'$. A $k$-orbit map $\ma$ is called \textcolor{red}{{\em $T'$-compatible}} if there exist a projection $\lambda: \gr \mapsto T(\ma')$, from the flag graph $\gr$ of $\ma$ to a quotient graph with $k'$ vertices, isomorphic to the symmetry type graph $T(\ma')$ of a $k'$-orbit map $\ma'$, 
 %regardless to the action of the automorphism group $\Gamma(\ma)$ on the set $\fl(\ma)$. 
%Moreover, a $T'$-compatible map $\ma$ is called \textcolor{red}{{\em $T'$-admissible}} if  
%under the action of a subgroup of $\Gamma(\ma)$, we can obtain a quotient from the flag graph $\gr$ isomorphic to the symmetry type graph with $k'$ vertices of type $T'$.

\section{Operations on maps.}
\label{sec:TruncLeap}

Given a map $\ma$ it is possible to find other maps in the same surface as $\ma$, by applying an operation to $\ma$. One of the most recalled operations on maps is the dual operation. Two maps $\ma$ and $\mathcal{N}$ are said to be dual of each other if there exists a bijection $\delta$ between the set of flags $\fl(\ma)$ of $\ma$ and the set of flags $\fl(\mathcal{N})$ of $\mathcal{N}$ such that for each $\Phi\in\fl(\ma)$ and each $i\in\{0,1,2\}$, $\Phi^i\delta=(\Phi\delta)^{2-i}$. 
%, as in Figure \ref{DualFlags}. 
The {\em dual map} of $\ma$ is denoted by $\ma^*$. Note that $(\ma^*)^* \cong \ma$. 
%\begin{figure}[htbp]
%\begin{center}
%%\vspace{-1.5cm}
%\includegraphics[width=12cm]{DualFlags.pdf}
%%\vspace{-2cm}
%\caption{Local representation of the dual map $\ma^*$ and its flag graph $\mathcal{G}_{\ma^*}$.}
%\label{DualFlags}
%\end{center}
%\end{figure}

If there exist a bijection $\delta$ from a map $\ma$ to itself, then $\ma \cong \ma^*$ and it is called a {\em self-dual map}. In this case we name $\delta$ a {\em duality} of $\ma$. Observe that for a self-dual map $\ma$, if $\delta_1$ and $\delta_2$ are two dualities of $\ma$, then the product of them is an automorphism of $\ma$. In particular, the square of any duality is an automorphism, and hence the order of any duality is even.

In terms of the flag graphs, the bijection $\delta$ can be regarded as a bijection between the vertices of $\gr$ and the vertices of $\mathcal{G_{\ma^*}}$ that sends edges of colour $i$ of $\gr$ to edges of colour $2-i$ of $\mathcal{G_{\ma^*}}$, for each $i \in \{0,1,2\}$. 

In \cite{MedSymTypeGph} was considered the problem of classification of $k$-orbit medial maps. Here we address the same problem for truncation and leapfrog maps, extending the results in \cite{k-orbitM} on truncation of $k$-orbit maps.

\subsection{Truncation.}
\label{subsec:Trunc}

Geometrically, when we apply truncation to a map $\ma$, the vertices of $\ma$ are replaced by faces, and the faces of $\ma$ become faces with twice the number of vertices than the original ones. An example is the truncated octahedron, shown in Figure \ref{OctTrunc}, it has six square faces and eight hexagonal faces that correspond to the vertices and faces of the octahedron, respectively. Hence, there is a correspondence between the set of faces of the truncated map $\Tr(\ma)$ of $\ma$ with the set of vertices and faces of $\ma$. That is, $F(\Tr(\ma)) = V(\ma)\cup F(\ma)$. 

\begin{figure}[htbp]
\begin{center}
%\vspace{-1.5cm}
\includegraphics[width=9cm]{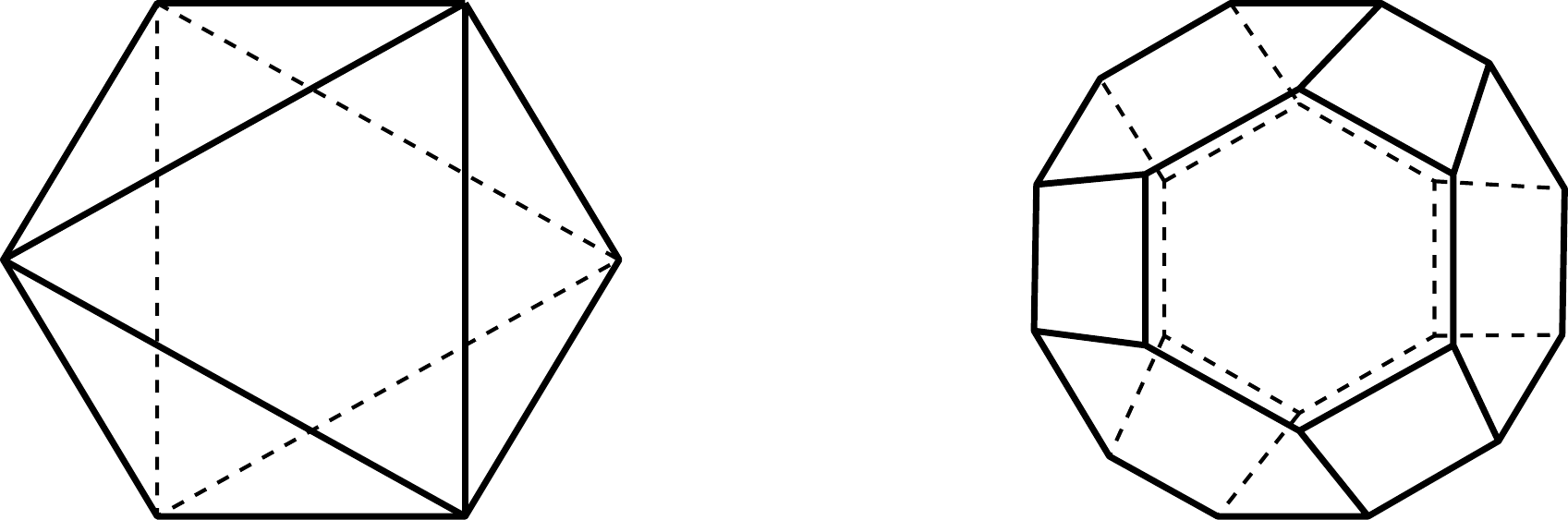}
%\vspace{-2cm}
\caption{Octahedron and truncated octahedron.}
\label{OctTrunc}
\end{center}
\end{figure}
 
Furthermore, for each edge of $\ma$ there are exactly two vertices of $\Tr(\ma)$, two of these vertices have an edge joining them if either both belong to a common edge of $\ma$ or if the corresponding edges in $\ma$  share a vertex and belong to the same face. 
Then, the sets of edges and vertices of $\Tr(\ma)$ are 
 $$E(\Tr(\ma))=E(\ma)\cup\{\{\Phi_0,\Phi_2\}|\Phi\in\fl(\ma)\} \; \text{ and }$$ 
$$V(\Tr(\ma)) =\{\{\Phi_0,\Phi_1\}|\Phi\in\fl(\ma)\},$$
respectively.
%It is straightforward to see that 
Each vertex of $\Tr(\ma)$ has valency 3 and therefore, the truncated map $\Tr(\ma)$ contains $2|E(\ma)|$ vertices and $3|E(\ma)|$ edges.

In the Figure \ref{TruncFlags} is depicted how on the surface every fundamental triangle for $\ma$ is divided into three smaller fundamental triangles for the truncated map $\Tr(\ma)$. 
\begin{figure}[htbp]
\begin{center}
%\vspace{0.5cm}
\includegraphics[width=5cm]{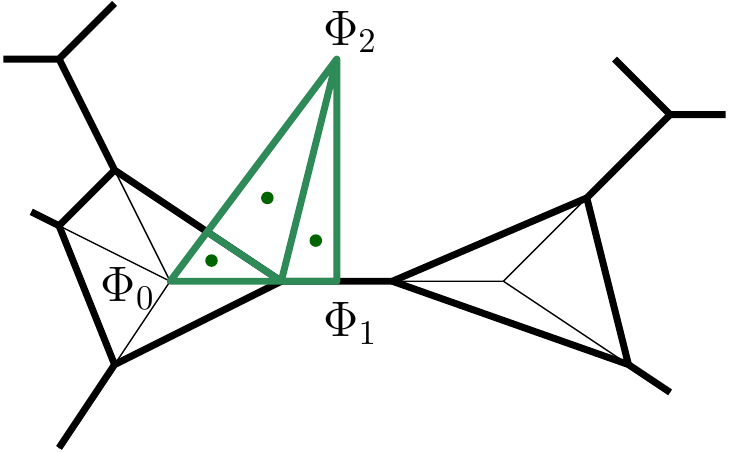}
%\vspace{-2cm}
\caption{The three flags of $\fl(\Tr(\ma))$ corresponding to the flag $\Phi =(\Phi_0,\Phi_1,\Phi_2) \in \fl(\ma)$.}
\label{TruncFlags}
\end{center}
\end{figure}
This is, for each flag $\Phi=(\Phi_0,\Phi_1,\Phi_2)$ in $\fl(\ma)$, 
there are three flags $(\Phi,0):=(\{\Phi_0,\Phi_1\}, \{\Phi_0,\Phi_2\}, \Phi_0)$, $(\Phi,1):=(\{\Phi_0,\Phi_1\}, \Phi_1, \Phi_2)$ and $(\Phi,2):=(\{\Phi_0,\Phi_1\},\{\Phi_0,\Phi_2\}, \Phi_2)$ corresponding to $\Phi$ in $\fl(\Tr(\ma))$.
 The adjacencies between the flags in $\fl(\Tr(\ma))$ are given as follows. 
\begin{align*}
(\Phi,0)\cdot r_0&=(\Phi^{s_1},0), & (\Phi,0)\cdot r_1&=(\Phi^{s_2},0), & (\Phi,0)\cdot r_2&=(\Phi,2); \\
(\Phi,1)\cdot r_0&=(\Phi^{s_0},1), & (\Phi,1)\cdot r_1&=(\Phi,2),             & (\Phi,1)\cdot r_2&=(\Phi^{s_2},1);\\
(\Phi,2)\cdot r_0&=(\Phi^{s_1},2), & (\Phi,2)\cdot r_1&=(\Phi,1),             & (\Phi,2)\cdot r_2&=(\Phi,0).
\end{align*}
Notice that $r_0$, $r_1$ and $r_2$ depend of the adjacency between the flags in $\fl(\ma)$. 
Moreover, $r_0$, $r_1$ and $r_2$ are fixed-point free involutions that generate the monodromy group $\Mon(Tr(\ma))$ of $\Tr(\ma)$, where $r_0r_2=r_2r_0$ and, $(r_1r_2)^3=id$. %(since the vertices of $\Tr(\ma)$ have valency 3).

Consequently, in Figure \ref{TruncAlg} is presented an algorithm to construct the flag graph of $\Tr(\ma)$ from $\gr$.
\begin{figure}[htbp]
\begin{center}
%\vspace{-1.5cm}
\includegraphics[width=10.5cm]{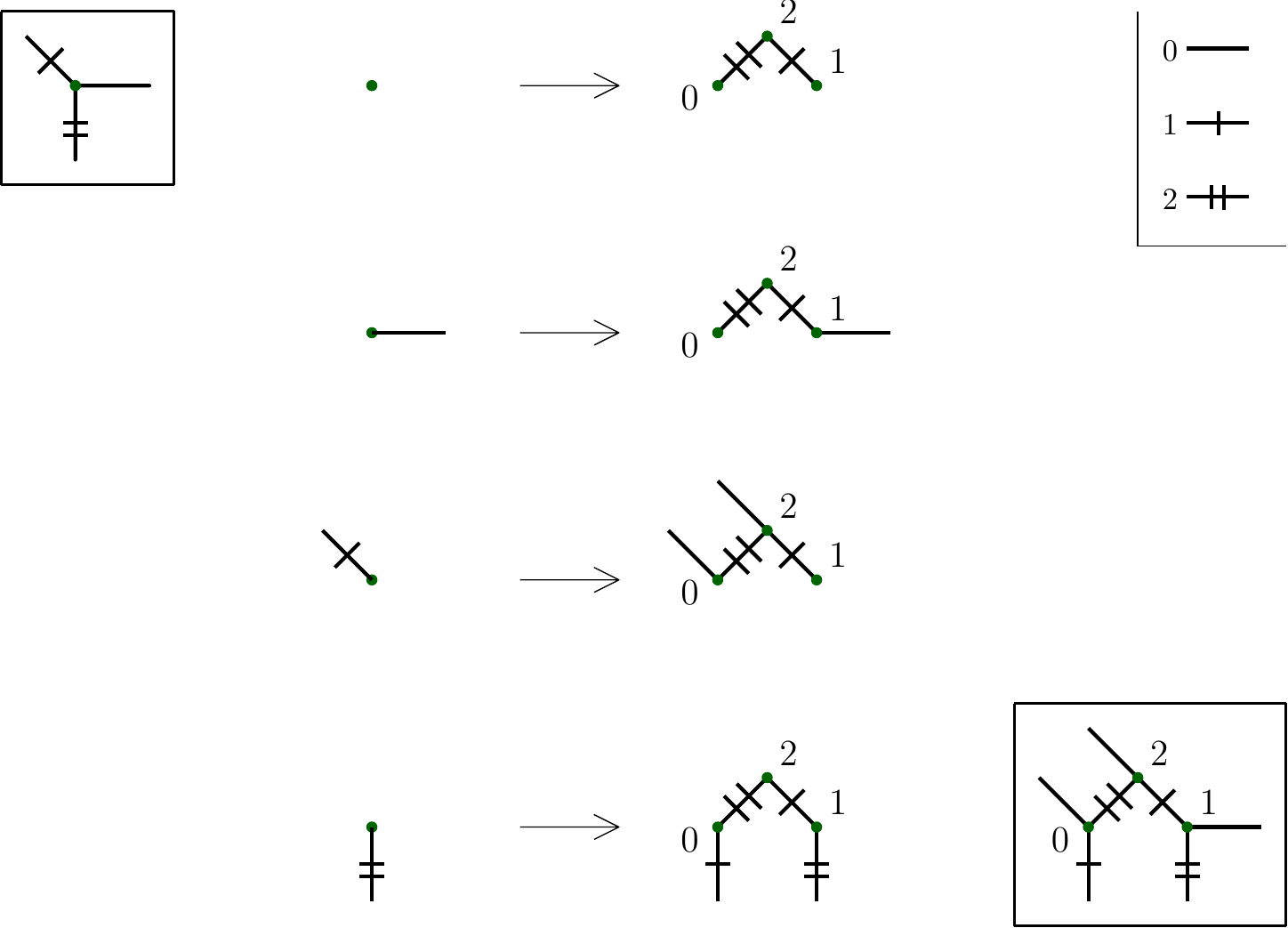}
%\vspace{-2cm}
\caption{Local representation of a flag in $\gr$, in the left. The image under the truncation operation, locally obtained, in the right.}
\label{TruncAlg}
\end{center}
\end{figure}
Such algorithm induces a partition $(\A_0, \A_2, \A_1)$ on the vertices of $\mathcal{G}_{\Tr(\ma)}$, where 
$\A_i:=\{(\Phi,i)|\Phi\in\fl(\ma)\}$ with $i=0,1,2$, and consequently we have Proposition \ref{3-0-comp}.

\begin{prop}\label{3-0-comp}
The flag graph $\mathcal{G}_{\Tr(\ma)}$, of the truncation map $\Tr(\ma)$ of any map $\ma$, can be quotient into a graph as the symmetry type graph $3^0$.
\end{prop}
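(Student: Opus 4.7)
The plan is to verify the claim by a direct computation from the explicit adjacency rules listed above for the flags of $\Tr(\ma)$, checking that the partition $(\A_0, \A_1, \A_2)$ of $V(\mathcal{G}_{\Tr(\ma)})$ is compatible with the three edge-colorings in the sense that for every pair of parts $(\A_i, \A_j)$ and each colour $c \in \{0,1,2\}$, either every colour-$c$ edge of $\mathcal{G}_{\Tr(\ma)}$ with an endpoint in $\A_i$ has its other endpoint in $\A_j$, or none does. Once this compatibility is established, the quotient graph is automatically cubic and properly 3-edge-coloured, since each class contains one vertex $(\Phi, i)$ for every $\Phi \in \fl(\ma)$ and each such vertex is incident to precisely one edge of each colour in $\mathcal{G}_{\Tr(\ma)}$.

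First, I would examine colour $0$. From the three equations $(\Phi,0)\cdot r_0 = (\Phi^{s_1},0)$, $(\Phi,1)\cdot r_0 = (\Phi^{s_0},1)$, and $(\Phi,2)\cdot r_0 = (\Phi^{s_1},2)$ one sees that the $r_0$-neighbour of any vertex in $\A_i$ lies again in $\A_i$, so colour $0$ contributes exactly one semi-edge at each of the three vertices of the quotient. Next, for colour $1$, the rules $(\Phi,0)\cdot r_1 = (\Phi^{s_2},0)$, $(\Phi,1)\cdot r_1 = (\Phi,2)$, and $(\Phi,2)\cdot r_1 = (\Phi,1)$ show that colour $1$ produces a semi-edge at the vertex $\A_0$ and joins $\A_1$ to $\A_2$. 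Finally, for colour $2$, the rules $(\Phi,0)\cdot r_2 = (\Phi,2)$, $(\Phi,1)\cdot r_2 = (\Phi^{s_2},1)$, and $(\Phi,2)\cdot r_2 = (\Phi,0)$ yield a semi-edge of colour $2$ at $\A_1$ together with an edge of colour $2$ joining $\A_0$ to $\A_2$.

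Collecting this information, the quotient graph has three vertices $\A_0, \A_1, \A_2$, with $\A_2$ playing the role of a central vertex that bears one semi-edge of colour $0$ and is joined to $\A_0$ by a colour-$2$ edge and to $\A_1$ by a colour-$1$ edge; the pendant vertex $\A_0$ carries semi-edges of colours $0$ and $1$, while $\A_1$ carries semi-edges of colours $0$ and $2$. This is precisely the description of the symmetry type graph $3^0$ in the notation borrowed from \cite{MedSymTypeGph}, so the quotient is the desired graph.

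The computation involves no real obstacle beyond unpacking the nine adjacency equations; the only point that needs care is observing that the compatibility between the partition and the three colour classes holds for \emph{every} flag $\Phi \in \fl(\ma)$ uniformly, so that the quotient is well-defined independently of $\ma$. This uniformity follows immediately from the fact that the rules for $r_0, r_1, r_2$ depend only on the second coordinate $i \in \{0,1,2\}$ of a vertex $(\Phi, i)$ when deciding whether the resulting edge stays inside $\A_i$ or moves to another part.
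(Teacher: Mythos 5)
Your proof is correct and follows essentially the same route as the paper: the paper defines the same partition $(\A_0,\A_2,\A_1)$ by the second coordinate and then appeals to Figure \ref{TruncAlg} (which encodes exactly the nine adjacency rules you unpack) to read off that the quotient is $3^0$. You have simply made explicit the colour-by-colour compatibility check that the paper leaves as ``straightforward to see.''
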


\begin{proof}
Let $\A_i=\{(\Phi,i)|\Phi\in\fl(\ma)\}$ be the subset of $\fl(\Tr(\ma))$ containing all flags of $\Tr(\ma)$ of the form $(\Phi,i)$ with $i=0,1,2$. 
Then, $\fl(\Tr(\ma))=\A_0\cup\A_1\cup\A_2$ and $\A_0\cap\A_1\cup\A_2=\emptyset$. % whenever $i\neq j$. 
Hence, $(\A_0,\A_2,\A_1)$ is a partition of the set of flags $\fl(\Tr(\ma))$. 
Based on Figure \ref{TruncAlg}, it is straightforward to see that the quotient of $\mathcal{G}_{\Tr(\ma)}$ over such partition, is isomorphic to the symmetry type graph of a map with symmetry type $3^0$ (see Figure \ref{STG-3-0}).
\begin{figure}[htbp]
\begin{center}
%\vspace{-1.5cm}
\includegraphics[width=3.5cm]{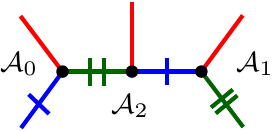}
%\vspace{-2cm}
\caption{Symmetry type graph $3^0$.}
\label{STG-3-0}
\end{center}
\end{figure}
\end{proof}

As the vertices of the map $\Tr(\ma)$ have valency three, there are exactly six flags of $\fl(\Tr(\ma))$ around each vertex in $\Tr(\ma)$. 
Recall that the vertices of the map $\Tr(\ma)$ are identified with the orbits of the subgroup $\langle r_1,r_2 \rangle$ on $\fl(\Tr(\ma))$.
Since the automorphism group of a map partitions its set of flags into orbits of the same size, then 
Theorem \ref{face-orbT(M)} implies that the 2-factors coloured by 1 and 2 in the symmetry type graph $T(\Tr(\ma))$ must be as those in Figure \ref{2-factors(1,2)Tr(M)}.

\begin{figure}[htbp]
\begin{center}
\vspace{0.5cm}
\includegraphics[width=11cm]{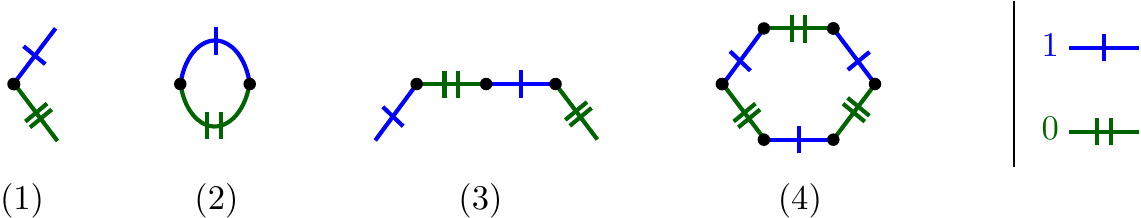}
\vspace{0.5cm}
\caption{Possible quotients of 1-2 coloured 6-cycles of $\mathcal{G}_{\Tr(\ma)}$.}
\label{2-factors(1,2)Tr(M)}
\end{center}
\end{figure}

Orbani\'c, Pellicer and Weiss proved the following Theorem and present examples of when any of this cases is possible, \cite{k-orbitM}.
\begin{thm}\cite{k-orbitM}
\label{orbitsTr(M)}
If $\ma$ is a $k$-orbit map, then the map $\Tr(\ma)$ is either a $k$-orbit, a $\frac{3k}{2}$-orbit (with $k$ even) or a $3k$-orbit map.
\end{thm}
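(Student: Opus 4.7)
The plan is to embed $\Gamma(\ma)$ in $\Gamma(\Tr(\ma))$, reduce the statement to bounding the index of the embedding, and then use the local $(1,2)$-cycle structure at vertices of $\Tr(\ma)$ to carry out that bound.

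First I would verify that for each $\gamma\in\Gamma(\ma)$ the assignment $\hat\gamma(\Phi,i):=(\gamma\Phi,i)$ defines a color-preserving automorphism of $\mathcal{G}_{\Tr(\ma)}$; this is a direct check against the adjacency formulas for $r_0, r_1, r_2$ using the fact that $\gamma$ commutes with each $s_j$. Setting $G:=\{\hat\gamma\mid\gamma\in\Gamma(\ma)\}\leq H:=\Gamma(\Tr(\ma))$, each part $\A_i$ is $G$-stable and is $G$-equivariantly identified with $\fl(\ma)$, so $G$ has exactly $3k$ orbits on $\fl(\Tr(\ma))$. Since both $G$ and $H$ act freely on flags, every $H$-orbit is a disjoint union of exactly $t:=[H:G]$ many $G$-orbits, giving $k_{\Tr(\ma)}=3k/t$. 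The theorem reduces to showing $t\in\{1,2,3\}$.

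The key structural input is that the six flags incident to any vertex $v$ of $\Tr(\ma)$ form a $(1,2)$-alternating $6$-cycle in $\mathcal{G}_{\Tr(\ma)}$: starting at $(\Phi,0)$ and alternating $r_2, r_1$ six times passes through all six flags and returns to $(\Phi,0)$. Because $H$ acts freely on flags, $\mathrm{Stab}_H(v)$ acts faithfully on this colored $6$-cycle, so it embeds in the color-preserving automorphism group of the $6$-cycle, which is the dihedral group $D_3$ of order $6$; hence $|\mathrm{Stab}_H(v)|\in\{1,2,3,6\}$, and these four cases match the four quotient shapes depicted in Figure \ref{2-factors(1,2)Tr(M)}. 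Within $D_3$, the unique nontrivial element that preserves the partition into $\A$-pairs at $v$ is the reflection through the midpoint of the color-$2$ edge joining the two $\A_1$-flags at $v$, and this reflection arises from an edge-reflection automorphism of $\ma$ in $\Gamma(\ma)$. Hence $|\mathrm{Stab}_G(v)|\leq 2$, and the local stabilizer ratio $|\mathrm{Stab}_H(v)|/|\mathrm{Stab}_G(v)|$ is at most $3$ whenever $|\mathrm{Stab}_G(v)|=2$.

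The main obstacle is to rule out the a priori case $t=6$, which would require $|\mathrm{Stab}_H(v)|=6$ together with $|\mathrm{Stab}_G(v)|=1$. The argument I would give is that $\mathrm{Stab}_H(v)=D_3$ forces the three faces of $\Tr(\ma)$ meeting $v$ (one ``new'' face coming from a vertex of $\ma$ and two ``old'' faces coming from faces of $\ma$) to be combinatorially interchangeable; this forces the valence of the underlying vertex of $\ma$ to equal twice the degree of each adjacent face of $\ma$, which is precisely the balanced situation in which the edge-reflection of $\ma$ through the corresponding edge is itself an automorphism of $\ma$, giving $|\mathrm{Stab}_G(v)|=2$ after all. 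With the pointwise bound $|\mathrm{Stab}_H(v)|/|\mathrm{Stab}_G(v)|\leq 3$ established, a global orbit-counting argument (using Theorem \ref{face-orbT(M)} to identify the number of $(1,2)$-components of $T(\Tr(\ma))$ with the number of vertex orbits of $\Tr(\ma)$, and comparing the four shapes of Figure \ref{2-factors(1,2)Tr(M)} with the corresponding $G$-orbit counts) yields $t\leq 3$, whence $k_{\Tr(\ma)}\in\{k,3k/2,3k\}$ as claimed.
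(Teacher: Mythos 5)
First, note that this paper does not actually prove Theorem \ref{orbitsTr(M)}; it is imported from \cite{k-orbitM}, so your argument has to stand on its own. Your set-up is the right one and is correct as far as it goes: $\gamma\mapsto\hat\gamma$ does embed $\Gamma(\ma)$ as a subgroup $G$ of $H=\Gamma(\Tr(\ma))$, $G$ has exactly $3k$ orbits on $\fl(\Tr(\ma))$, and freeness of both actions gives that $\Tr(\ma)$ has $3k/t$ flag-orbits with $t=[H:G]$, so the theorem is exactly the claim $t\le 3$. The gap is in how you bound $t$. By orbit--stabilizer, for any vertex $v$ of $\Tr(\ma)$ one has $[H:G]=\frac{|Hv|}{|Gv|}\cdot\frac{|\mathrm{Stab}_H(v)|}{|\mathrm{Stab}_G(v)|}$, so a pointwise bound of $3$ on the stabilizer ratio controls nothing unless you also show $|Hv|=|Gv|$, i.e.\ that $G$ is transitive on each $H$-orbit of vertices --- which is essentially the statement you are trying to prove. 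The ``global orbit-counting argument'' you appeal to at the end is not supplied, and it cannot be extracted from the local data: an $H$-orbit of vertices may split into many $G$-orbits, and summing $6/|\mathrm{Stab}(v)|$ over vertex-orbits merely reproduces the identities you started from. A second, independent flaw is the step ruling out $|\mathrm{Stab}_H(v)|=6$ with $|\mathrm{Stab}_G(v)|=1$: you infer from a numerical coincidence (vertex valence equal to twice the adjacent face degrees) that the edge-reflection of $\ma$ \emph{is} an automorphism of $\ma$. Degree conditions never force the existence of an automorphism, so this implication is unjustified as written.

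The repair is the one \cite{k-orbitM} uses and that the discussion following the theorem in this paper gestures at. Show that $G$ is exactly the setwise stabilizer in $H$ of the class $\A_0$ (equivalently of the set $V(\ma)\subseteq F(\Tr(\ma))$ of vertex-faces): an $h\in H$ preserving $\A_0$ induces, because it commutes with $r_0,r_1,r_2$, a permutation of $\fl(\ma)\cong\A_0$ commuting with $s_0,s_1,s_2$, hence an automorphism of $\ma$. Then $[H:G]=|H\cdot\A_0|\le 3$, because any ordered tripartition of $\fl(\Tr(\ma))$ whose quotient is the graph $3^0$ is completely determined by the class of one fixed flag: the three vertices of $3^0$ are mutually distinguishable by their semi-edge colours, so the class of each neighbour of a flag is forced by the class of the flag itself, and connectivity of $\mathcal{G}_{\Tr(\ma)}$ propagates a single choice (of which there are only three) to the whole flag set. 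With that in place your local $D_3$ analysis at a vertex becomes a consequence rather than the engine of the proof.
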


With regards to this result, using coset enumeration, the authors in \cite{k-orbitM} show that in particular if a $k$-orbit map $\ma$ which truncation map $\Tr(\ma)$ is either a $\frac{3k}{2}$-orbit or a $k$-orbit map, then we must necessarily find a bi-partition on the vertices of $\gr$ in such way that $\gr$ can be quotient into a graph isomorphic to the symmetry type graph $2_{01}$; i.e. all vertices of $\ma$ must have even valence, and the flags of each two adjacent faces in $\ma$ can be put in different parts.
%However, the truncation $\Tr(\ma)$ of any map $\ma$ is \textcolor{red}{$3^0$-compatible,} \cite{k-orbitM}. 

By Proposition \ref{3-0-comp}, we have that there exists a partition $(\A_0,\A_2,\A_1)$ of the vertices of the flag graph $\mathcal{G}_{\Tr(\ma)}$ of the truncation map $\Tr(\ma)$ in such way that for each vertex $\Psi\in\fl(\Tr(\ma))$ in the partition $\A_0$, the vertices $\Psi^2$ and $\Psi^{2,1}$ correspond to the partitions $\A_2$ and $\A_1$, respectively. This latter implies a {\em local arrangement in the flags of $\Tr(\ma)$: assembling the flags $\Psi,\Psi^2,\Psi^{2,1}\in\fl(\Tr(\ma))$ into a new flag $\Phi_{\Psi}\in\fl(\ma)$ where the face $\Psi_2$ will be considered as an element of $V(\ma)$, \cite{k-orbitM}.}

Finally the authors, in \cite{k-orbitM}, conclude that the flag graph $\mathcal{G}_{\Tr(\ma)}$ can be quotient in to a graph isomorphic to the symmetry type graph $3^0$ under the action of a subgroup $H\leq\Gamma(\Tr(\ma))$, of the automorphism group of $\Tr(\ma)$, if and only if $\ma$ is a regular map. 

In what follows, supported by Proposition \ref{3-0-comp} and the local arrangement on the flags of $\Tr(\ma)$ described above, 
we discuss the results of Orbani\'c, Pellicer and Weiss on truncation of regular, 2-orbit and 3-orbits maps, \cite{k-orbitM}.
Further on, we show results on the truncation of $k$-orbit maps, with $k =4,\dots,7$ and $k=9$, using the same local arrangement on the flags of $\Tr(\ma)$ that Orbani\'c, Pellicer and Weiss used for smaller $k$. 
Due to space, we leave on aside the truncation of 8-orbit maps.

\subsubsection{Truncation of regular, 2-orbit and 3-orbit maps.}

As it was said previously, 
if $\ma$ is a regular map, then $\Tr(\ma)$ is either regular or a 3-orbit map, which has symmetry type graph $3^0$ (see Figure \ref{STG-3-0}).
On another hand, if $\ma$ is a 2-orbit or a 3-orbit map, Orbani\'c, Pellicer and Weiss conclude with the following results on the truncation map of $\ma$, \cite{k-orbitM} .

\begin{prop}\cite{k-orbitM}
\label{2-orbitTr}
If the truncation $\Tr(\ma)$ of a 2-orbit map $\ma$ is a 2-orbit map, then one of the following holds
\begin{itemize}
\item[(i)] $\ma$ and $\Tr(\ma)$ are of type 2,
\item[(ii)] $\ma$ is of type $2_{01}$ and $\Tr(\ma)$ is of type $2_0$, or
\item[(iii)] $\ma$ is of type $2_2$ and $\Tr(\ma)$ is of type $2_{12}$.
\end{itemize}
\end{prop}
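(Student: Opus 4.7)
The plan is to argue by case analysis over the seven types of $2$-orbit maps for $\ma$. Let $B$ and $W$ denote the two $\Gamma(\ma)$-orbits on $\fl(\ma)$. By Proposition \ref{3-0-comp}, the partition $(\A_0,\A_2,\A_1)$ decomposes $\fl(\Tr(\ma))$ into three parts of size $|\fl(\ma)|$, and each is preserved by the natural inclusion $\Gamma(\ma)\leq\Gamma(\Tr(\ma))$ (any $\gamma\in\Gamma(\ma)$ acts on $(\Phi,i)$ only on the first coordinate). This produces six $\Gamma(\ma)$-orbits $B_i:=B\times\{i\}$ and $W_i:=W\times\{i\}$ for $i=0,1,2$ on $\fl(\Tr(\ma))$. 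If $\Tr(\ma)$ is $2$-orbit, orbit counting forces $[\Gamma(\Tr(\ma)):\Gamma(\ma)]=3$, so $\Gamma(\Tr(\ma))/\Gamma(\ma)$ is cyclic of order three and acts on the set $\{B_0,W_0,B_1,W_1,B_2,W_2\}$ as a colour-preserving automorphism of the $\Gamma(\ma)$-quotient graph, with orbits two fixed-point-free $3$-cycles.

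First I compute, for each type $2_I$ with $I\subseteq\{0,1,2\}$, the $\Gamma(\ma)$-quotient graph on those six vertices. From the adjacency table for $r_0,r_1,r_2$ one reads off when an edge becomes a semi-loop (precisely when the relevant $s_j$ preserves the orbits of $\ma$, that is, when $j\in I$) and when it crosses between $B_i$ and $W_i$ or between different $\A_i$'s. This attaches a ``loop signature'' to each of the six vertices, recording the colours of semi-loops attached to it.

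For the four types $2_0$, $2_1$, $2_{02}$ and $2_{12}$, inspection shows that the loop signature isolates some pair $\{B_i,W_i\}$ from the other four vertices; for example, in type $2_0$ only $B_1$ and $W_1$ carry a semi-loop (of colour $0$). Any colour-preserving automorphism must stabilise such a distinguished pair setwise, and a two-element set cannot admit a fixed-point-free action of a group of order three. Hence the required order-three automorphism does not exist, and $\Tr(\ma)$ cannot be $2$-orbit in these four cases. For the remaining three types $2$, $2_{01}$ and $2_2$, the order-three action is induced on the six $\Gamma(\ma)$-orbits by $r_2r_1$, which is well defined on this quotient since $(r_1r_2)^3=\mathrm{id}$. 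A direct calculation from the adjacency table identifies the two $3$-cycles and the resulting two-vertex quotient: in type $2$ all three colours end up between the orbits, giving $\Tr(\ma)$ of type $2$; in type $2_{01}$ colour $0$ becomes a semi-loop at each orbit and colours $1,2$ cross, giving type $2_0$; and in type $2_2$ colours $1,2$ become semi-loops while colour $0$ crosses, giving type $2_{12}$.

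The main obstacle is the bookkeeping for each type $2_I$: tracking which of the six $\Gamma(\ma)$-orbits sit in which $\A_i$, and how each $r_j$ permutes them through the substitutions $s_l$ dictated by the truncation rules. Once the loop signatures are tabulated and the image of $r_2r_1$ on the six orbits is computed, the two halves of the argument, non-existence in four cases and identification in three, become immediate. The only conceptual ingredient beyond the table is that $\Gamma(\Tr(\ma))/\Gamma(\ma)$ must act fixed-point-freely on the six $\Gamma(\ma)$-orbits, which follows from the assumption that $\Tr(\ma)$ has exactly two flag orbits of size $3|\Gamma(\ma)|$ each, together with the freeness of the action of $\Gamma(\Tr(\ma))$ on $\fl(\Tr(\ma))$.
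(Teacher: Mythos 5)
Your overall strategy (run over the seven $2$-orbit types of $\ma$, form the six $\Gamma(\ma)$-orbits $B_i,W_i$ on $\fl(\Tr(\ma))$, and ask when they merge three-and-three) is a legitimate alternative to the paper's route, which instead first restricts $T(\Tr(\ma))$ to the types $2$, $2_0$, $2_{12}$ via the trivalence of $\Tr(\ma)$ (the $(1,2)$ $2$-factor must be a quotient of an alternating hexagon) and then reassembles $\Psi,\Psi^2,\Psi^{2,1}$ into a flag of $\ma$. However, your argument has a genuine gap at its pivot: you assert that $\Gamma(\Tr(\ma))/\Gamma(\ma)$ is a cyclic group of order three acting on the six $\Gamma(\ma)$-orbits. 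An index-$3$ subgroup need not be normal, so this quotient need not be a group, and a general element of $\Gamma(\Tr(\ma))$ sends $\Gamma(\ma)$-orbits to orbits of a conjugate of $\Gamma(\ma)$, not to $\Gamma(\ma)$-orbits; you give no reason why $\Gamma(\ma)\trianglelefteq\Gamma(\Tr(\ma))$. What survives without normality is only this: each of the two $\Gamma(\Tr(\ma))$-orbits is a union of exactly three of the six $\Gamma(\ma)$-orbits (by containment and size count), and for each colour $j$ the monodromy map $u\mapsto u\cdot r_j$ on $\Gamma(\ma)$-orbits must send each $3$-class into a single class. That weaker statement does not support your exclusion step: a distinguished pair such as $\{B_1,W_1\}$ in type $2_0$ is \emph{not} forced to lie in one class, it may perfectly well split one per class. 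Concretely, for $\ma$ of type $2_0$ the colour-$0$ data alone admits the candidate partition $\{B_0,W_0,B_1\}\cup\{B_2,W_2,W_1\}$ (each class $r_0$-invariant, with one semi-edge vertex per class); it is only the colour-$1$ adjacencies ($B_0\cdot r_1=W_0$ stays inside the class while $B_1\cdot r_1=B_2$ leaves it) that kill it. So the ``loop signature plus fixed-point-free $\mathbb{Z}_3$'' shortcut is not available; the four exclusions require checking all three colours against every candidate $3+3$ partition.

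Two smaller points. First, $r_2r_1$ is a monodromy element, not an automorphism; it does act on the six $\Gamma(\ma)$-orbits, but nothing you say guarantees that its orbits coincide with the two $\Gamma(\Tr(\ma))$-classes. In the surviving cases you must enumerate \emph{all} colour-consistent $3+3$ partitions and check that each yields the claimed type (it happens that the partition is unique in each of the three cases, and it does agree with the $\langle r_2r_1\rangle$-orbits, but exhibiting one consistent partition proves only a conditional statement). Second, once these repairs are made the case analysis does close and reproduces (i)--(iii), so the defect is in the justification, not in the conclusions; but as written the proof is incomplete at exactly the step that eliminates the types $2_0$, $2_1$, $2_{02}$, $2_{12}$.
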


The symmetry type graphs 2, $2_{01}$, $2_0$, $2_2$ and $2_{12}$ are depicted in Figure \ref{OPW}.

\begin{prop}\cite{k-orbitM}
\label{3-orbitTr_2-orbitM}
If the truncation $\Tr(\ma)$ of a 2-orbit map is a 3-orbit map, then $\ma$ is of type $2_{01}$ and $\Tr(\ma)$ is of type $3^0$ (see Figure \ref{OPW}).
\end{prop}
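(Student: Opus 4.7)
The plan is to identify $T(\ma)$ and $T(\Tr(\ma))$ separately, then combine. For $T(\ma)$, Theorem~\ref{orbitsTr(M)} applied with $k=2$ tells us the truncation must have $2$, $3$, or $6$ orbits, so a $3$-orbit truncation is exactly the $\frac{3k}{2}$-orbit case. The criterion recalled from \cite{k-orbitM} in the paragraph after Theorem~\ref{orbitsTr(M)} then asserts that $\gr$ must admit a bi-partition quotienting onto the symmetry type graph $2_{01}$, and in the $\frac{3k}{2}$-orbit (rather than $k$-orbit) regime the relevant partition is exactly the one into flag-orbits of $\Gamma(\ma)$. Since $\ma$ has exactly two flag-orbits, $T(\ma)$ has exactly two vertices and coincides with this quotient, yielding $T(\ma)=2_{01}$.

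For $T(\Tr(\ma))$ I would exploit Proposition~\ref{3-0-comp}, which supplies the canonical three-part partition $(\A_0,\A_2,\A_1)$ of $\fl(\Tr(\ma))$ whose quotient is the graph $3^0$; each part has size $|\fl(\ma)|$. Since $\Gamma(\Tr(\ma))$ acts freely on $\fl(\Tr(\ma))$ with exactly three orbits by hypothesis, each of its orbits also has size $|\fl(\Tr(\ma))|/3=|\fl(\ma)|$. Consequently, once one verifies that the partition $(\A_0,\A_1,\A_2)$ is preserved by $\Gamma(\Tr(\ma))$, a counting comparison forces each $\A_i$ to be exactly one $\Gamma(\Tr(\ma))$-orbit, and the symmetry type graph $T(\Tr(\ma))$ coincides with the $3^0$ quotient supplied by Proposition~\ref{3-0-comp}.

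The principal obstacle is thus showing that $(\A_0,\A_1,\A_2)$ is $\Gamma(\Tr(\ma))$-invariant, as the labels $\A_i$ are not a priori detectable from the abstract flag graph. I would argue through a combinatorial distinction between the two types of faces of $\Tr(\ma)$: a vertex-face (coming from $v\in V(\ma)$) has all of its bounding edges of the ``new'' type, i.e., of the sort separating a vertex-face from a face-face, whereas a face-face (coming from $f\in F(\ma)$ with $|f|\geq 3$) has its bounding edges alternating between old and new, containing at least three old edges in its boundary. This asymmetry can be phrased purely in terms of the abstract structure of $\Tr(\ma)$, in the generic case through the face cycle lengths $2\deg(v)$ versus $4|f|$ in $\mathcal{G}_{\Tr(\ma)}$ and otherwise through the all-new-edge characterization of vertex-faces, so it is preserved by every automorphism of $\Tr(\ma)$. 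Consequently such automorphisms send vertex-faces to vertex-faces, face-faces to face-faces, and hence old edges to old edges and new edges to new edges; since each $\A_i$ is defined by the type of its edge and (where relevant) the type of its face, each $\A_i$ is invariant, and the proof concludes as described.
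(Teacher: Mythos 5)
Your overall strategy differs from the paper's: the paper (following \cite{k-orbitM}) first restricts $T(\Tr(\ma))$ to the three-vertex types compatible with the $(1,2)$ $2$-factors of Figure \ref{2-factors(1,2)Tr(M)}, namely $3^0$ and $3^{02}$, and then \emph{assembles} the flags $\Psi$, $\Psi^2$, $\Psi^{2,1}$ into flags $\Phi_\Psi$ of $\ma$ to read off $T(\ma)$ in each case, the $3^{02}$ case being excluded because it forces $\ma$ to be a $3$-orbit map (Proposition \ref{3-orbitTr}). You instead try to establish the two conclusions independently, and the decisive step of your version --- that the canonical tripartition $(\A_0,\A_2,\A_1)$ of $\fl(\Tr(\ma))$ is $\Gamma(\Tr(\ma))$-invariant --- has a genuine gap. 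Observe that your invariance argument nowhere uses the hypothesis that $\ma$ has two flag-orbits; if it were correct it would apply verbatim to any $3$-orbit truncation, in particular to the situation of Proposition \ref{3-orbitTr}, where $\Tr(\ma)$ is a $3$-orbit map of type $3^{02}$. There the quotient by the flag-orbit partition is $3^{02}$ rather than $3^0$, so the orbit partition cannot coincide with the tripartition, and the tripartition is therefore \emph{not} invariant. The invariance you need is genuinely conditional on $\ma$ being a $2$-orbit map, so no purely local argument of the kind you propose can deliver it.

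Concretely, the proposed mechanism fails at two points. The claim that an automorphism of $\Tr(\ma)$ must send vertex-faces to vertex-faces is false in general: the face lengths $2\deg(v)$ and $4|f|$ coincide whenever $\deg(v)=2|f|$ (already for a vertex of valence $6$ and a triangular face; e.g.\ $\Tr(\{3,6\})\cong\{6,3\}$ is face-transitive and mixes the two kinds of faces completely). And the fallback characterization of a vertex-face as ``a face all of whose bounding edges are new'' is circular, since an edge of $\Tr(\ma)$ is \emph{new} precisely when it separates a vertex-face from a face-face; neither notion is available before the other in the abstract map $\Tr(\ma)$. A similar, smaller, gap occurs in your first half: the coset-enumeration criterion recalled after Theorem \ref{orbitsTr(M)} only yields \emph{some} bipartition of $\gr$ with quotient $2_{01}$, and your assertion that this bipartition is the $\Gamma(\ma)$-orbit partition is exactly what has to be proved (a priori an automorphism could interchange the two parts, in which case $T(\ma)$ would be a different two-vertex type). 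The paper's route through the assembled flags avoids both issues by deducing the type of $\ma$ from the already-constrained type of $\Tr(\ma)$.
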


\begin{prop}\cite{k-orbitM}
\label{3-orbitTr}
If the truncation $\Tr(\ma)$ of a 3-orbit map is a 3-orbit map, then $\ma$ and $\Tr(\ma)$ are of type $3^{02}$ (see Figure \ref{OPW}).
\end{prop}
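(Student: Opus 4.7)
The plan is to combine the bi-partition condition on $\gr$ recalled from \cite{k-orbitM} with the local partition of Proposition \ref{3-0-comp}, and then to run a short case analysis on the 3-vertex symmetry type graphs. Because $\Tr(\ma)$ is a $k$-orbit map with $k=3$ equal to the orbit number of $\ma$, the flag graph $\gr$ admits a bi-partition of its vertices whose quotient is isomorphic to $2_{01}$; equivalently, every vertex of $\ma$ has even valence and the faces of $\ma$ admit a proper 2-colouring. Since $\Gamma(\ma)$ either preserves each colour class or globally swaps the two, this bi-partition descends to a 2-vertex quotient of $T(\ma)$ compatible with $2_{01}$.

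I would then enumerate the 3-vertex properly 3-edge-coloured cubic graphs (with semi-edges allowed) satisfying the 0-2 constraint of Figure \ref{2-factors(0,2)}, and retain only those admitting the $2_{01}$-quotient described above. The check is short because each candidate is determined by the colours of its semi-edges and by how its three vertices distribute among the two quotient classes; only the graph $3^{02}$ of Figure \ref{OPW} survives, so $T(\ma) = 3^{02}$.

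For $T(\Tr(\ma))$, I would combine two facts. First, since $\ma$ is 3-orbit and in particular not regular, the remark from \cite{k-orbitM} cited just before Proposition \ref{3-0-comp} gives $T(\Tr(\ma)) \neq 3^0$, while $\Tr(\ma)$ being 3-orbit forces $T(\Tr(\ma))$ to have three vertices. Starting from the known $3^{02}$ structure of $T(\ma)$ and the partition $(\A_0,\A_2,\A_1)$ of Proposition \ref{3-0-comp}, I would trace the three $\Gamma(\Tr(\ma))$-orbits via Figure \ref{TruncAlg} and the formulae $(\Phi,i)\cdot r_j$, verifying that each orbit contains one $\Gamma(\ma)$-orbit from each of $\A_0,\A_1,\A_2$; reading off the quotient adjacencies reproduces exactly $3^{02}$. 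The main obstacle is the case analysis that produces $T(\ma) = 3^{02}$, because one must rule out every other 3-vertex symmetry type graph by exhibiting an incompatibility with either the $2_{01}$-quotient or the 0-2 constraint.
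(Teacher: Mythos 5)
Your first paragraph contains the decisive error. The condition recalled from \cite{k-orbitM} is a bi-partition of the vertices of the flag graph $\gr$ (equivalently: all vertices of $\ma$ have even valence and the faces of $\ma$ admit a proper $2$-colouring); it is a structure on the map, not on $T(\ma)$, and it descends to a bi-partition of the vertices of $T(\ma)$ only when $\Gamma(\ma)$ preserves the two parts. In the case where some automorphism swaps them, a flag-orbit can meet both parts and no $2$-vertex quotient of $T(\ma)$ is induced. That is precisely what happens here: a colour-$2$ semi-edge at a vertex of $T(\ma)$ means some flag $\Phi$ lies in the same $\Gamma(\ma)$-orbit as $\Phi^2$, and since $\Phi$ and $\Phi^2$ lie in opposite parts of the bi-partition, that orbit straddles both parts. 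Each of the three $3$-vertex symmetry type graphs $3^0$, $3^2$, $3^{02}$ carries a colour-$2$ semi-edge somewhere, so none of them ``admits the $2_{01}$-quotient'' in your sense. Run literally, your filter eliminates all three candidates --- including $3^{02}$ --- and would show that no $3$-orbit map has a $3$-orbit truncation, contradicting the very statement being proved. In particular, a type-$3^{02}$ map necessarily has an automorphism interchanging the two face colour classes, which is exactly the situation your descent claim overlooks.

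The argument the paper attributes to \cite{k-orbitM} runs in the opposite direction and avoids this: one constrains $T(\Tr(\ma))$ first, not $T(\ma)$. Since every vertex of $\Tr(\ma)$ has valency $3$, the $(1,2)$-coloured $2$-factor of $T(\Tr(\ma))$ must be assembled from the quotients of $6$-cycles in Figure \ref{2-factors(1,2)Tr(M)}, and together with the $(0,2)$ constraint of Figure \ref{2-factors(0,2)} this leaves only $3^0$ and $3^{02}$ among the $3$-vertex types (Figure \ref{OPW}). If $T(\Tr(\ma))=3^0$, the three parts $\A_0,\A_2,\A_1$ of Proposition \ref{3-0-comp} coincide with the three flag-orbits, so assembling $\Psi,\Psi^2,\Psi^{2,1}$ produces a single orbit of flags $\Phi_\Psi$ and $\ma$ is regular, contradicting that $\ma$ is $3$-orbit; hence $T(\Tr(\ma))=3^{02}$, and assembling the flags with $\Psi_2\in V(\ma)$ and tracing adjacencies yields three orbit representatives in $\fl(\ma)$ whose quotient is again $3^{02}$. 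Your second paragraph is essentially this assembling step, but it takes $T(\ma)=3^{02}$ as input from the flawed first step, and its assertion that each $\Gamma(\Tr(\ma))$-orbit meets each of $\A_0,\A_1,\A_2$ in exactly one $\Gamma(\ma)$-orbit is not automatic --- for $3^{02}$ the orbits do not coincide with the parts --- so it too must be extracted from the hexagon constraint rather than assumed.
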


In other words, if $\ma$ is a regular, a 2-orbit or a 3-orbit map, and $\Tr(\ma)$ has either 1, 2 or 3 flag-orbits, then the symmetry type graph of $\Tr(\ma)$ is one of the six graphs in Figure \ref{OPW}. Which in fact, are the only possible symmetry type graphs with 1, 2 and 3 vertices, consistent with the (1,2) 2-factors in Figure \ref{2-factors(1,2)Tr(M)}.
\begin{figure}[htbp]
\begin{center}
\vspace{0.5cm}
\includegraphics[width=10cm]{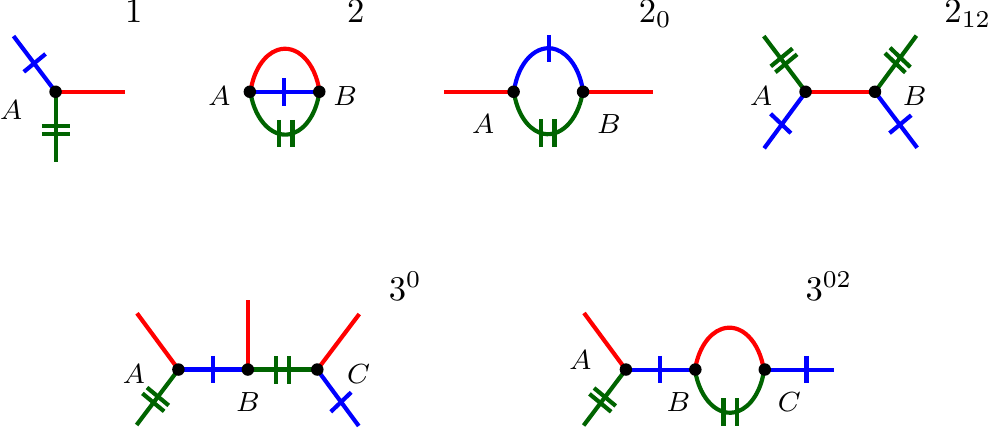}
\vspace{0.5cm}
\caption{Symmetry type graphs of $\Tr(\ma)$, for a regular map or $\ma$ as in Propositions \ref{2-orbitTr}, \ref{3-orbitTr_2-orbitM}, \ref{3-orbitTr}.}
\label{OPW}
\end{center}
\end{figure}

Orbani\'c, Pellicer and Weiss prove the Propositions \ref{2-orbitTr}, \ref{3-orbitTr_2-orbitM} and \ref{3-orbitTr} based on the following facts.
\begin{itemize}
\item[(i)] All the vertices of the map $\Tr(\ma)$ have valency 3.
\item[(ii)] The vertex set of $\ma$ is a proper subset of the set of faces of $\Tr(\ma)$ (recall that $F(\Tr(\ma)) = V(\ma)\cup F(\ma)$).
\item[(iii)] Each flag $\Phi_{\Psi}\in\fl(\ma)$ of $\ma$ is divided in exactly three flags $(\Phi_{\Psi},0)=:\Psi$, $(\Phi_{\Psi},2)=:\Psi^2$, $(\Phi_{\Psi},1)=:\Psi^{2,1} \in \fl(\Tr\ma)$ of $\Tr(\ma)$ (Figures \ref{TruncFlags} and \ref{TruncAlg}), where $$\Phi_{\Psi}:=\{\Psi,\Psi^2,\Psi^{2,1}\}.$$
\end{itemize}

This is, given that we can define a proper partition $(\A_0,\A_2,\A_1)$ of the set $\fl(\Tr(\ma))$ in such way that for each flag $\Psi\in\A_0$, the flags $\Psi^2\in\A_2$ and $\Psi^{2,1}\in\A_1$, %the flag graph $\mathcal{G}_{\Tr(\ma)}$ of the truncation map $\Tr(\ma)$ can be quotient in tois $3^0$-compatible,   
consequently. 
Assume that the flag $\Psi\in\A_0$ represents a particular orbit of the flags of $\Tr(\ma)$, follow its adjacencies determined by the corresponding symmetry type graph $T(\Tr(\ma))$ in Figure \ref{OPW} to define to which flag-orbit  of $\fl(\Tr(\ma))$ belong the other flags in the in the partitions $\A_2$ and $\A_1$. Finally, suppose that the face $\Psi_2$ of $\Tr(\ma)$ corresponds to a vertex in $V(\ma)$, and obtain the representative flag $\Phi_{\Psi}\in\fl(\ma)$. 
Then, determine the symmetry type of $\ma$ when $\Tr(\ma)$ is a 2-orbit or a 3-orbit map, from the types 2, $2_{01}$, $2_2$ and $3^{02}$ of maps that are vertex-transitive and also have vertices of valency 3. % in the following way.

On the other hand, if $\Tr(\ma)$ is a $3k$-orbit map, applying the algorithm presented in the Figure \ref{TruncAlg} to the symmetry type graph $T(\ma)$ with $k$ vertices, we obtain straightforward the symmetry type graph of $\Tr(\ma)$ with $3k$ vertices, and we will refer to this as the \emph{truncated symmetry type graph} of $T(\ma)$. 
For instance, the truncated symmetry type graphs with 6 vertices, that correspond to the seven symmetry type graphs of maps with 2 vertices are 
depicted in Figure \ref{Trunc(2-orbM)STG}. (Recall that the notation of the symmetry types of $k$-orbit maps, with $k\leq5$, follows from \cite{MedSymTypeGph}.)
\begin{figure}[htbp]
\begin{center}
%\vspace{-.3cm}
\includegraphics[width=10cm]{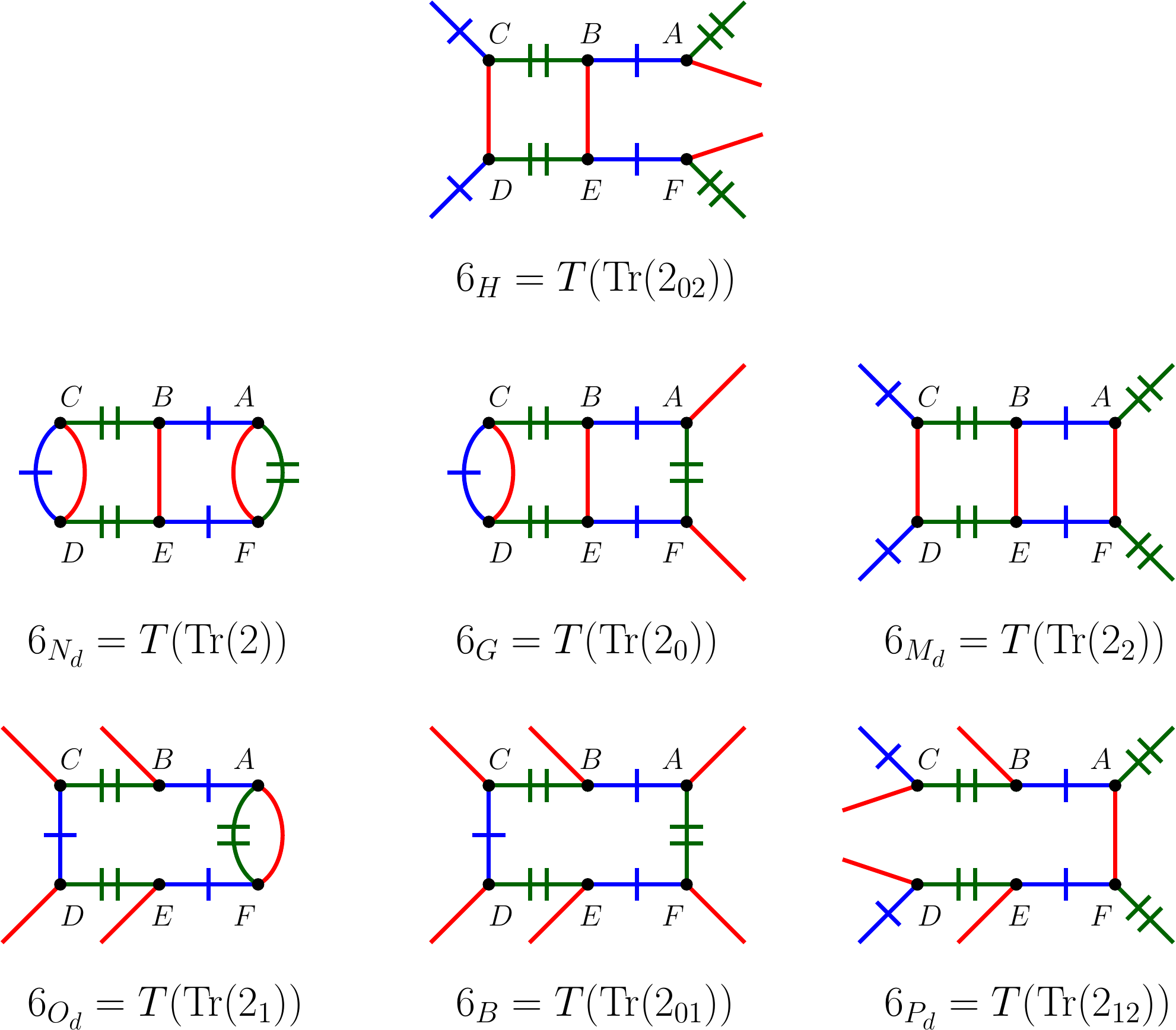}
\vspace{0cm}
\caption{Truncation symmetry type graphs with 6 vertices, of the seven 2-orbit symmetry type graphs.}
\label{Trunc(2-orbM)STG}
\end{center}
\end{figure}

%\newpage 
Now we proceed to our first goal: to find the possible symmetry type graphs with $k$ and $\frac{3k}{2}$ vertices, of maps  that correspond to truncation of a $k$-orbit map $\ma$, with $k=4,5,6,7,9$. Extending the results of \cite{k-orbitM}, for the truncation of these $k$-orbit maps. 
To find the corresponding symmetry type of the map $\ma$, we use the same local arrangement on the flags of $\Tr(\ma)$, assembling the flags $\Psi,\Psi^2,\Psi^{2,1}\in\fl(\Tr(\ma))$ into a new flag $\Phi_{\Psi}\in\fl(\ma)$, given in \cite{k-orbitM},
under the appropriate partition $(\A_0,\A_2,\A_1)$ of the flags in $\fl(\Tr(\ma))$ according to Proposition \ref{3-0-comp}.

%\newpage
\subsubsection{Truncation of 4-orbit maps.}
\label{Tr4-orb}

By Theorem \ref{orbitsTr(M)}, if $\ma$ is a 4-orbit map, then its truncation $\Tr(\ma)$ is a map with either 4, 6 or 12 orbits on its flags.  

Let $\ma$ be a 4-orbit map and suppose that the map $\Tr(\ma)$ also has 4 orbits on its flags. Then, by a proper combination of the (1,2) 2-factors shown in Figure \ref{2-factors(1,2)Tr(M)}, into a symmetry type graph with 4 vertices (and looking at the twenty two symmetry type graphs with 4 vertices, \cite{MedSymTypeGph}), it can be seen that $\Tr(\ma)$
 has either symmetry type $4_{D_p}$, $4_D$ or $4_{G_d}$ (see Figure \ref{STG-4Dp_4D_4Gd}). 
\begin{figure}[htbp]
\begin{center}
%\vspace{-1.5cm}
\includegraphics[width=7.5cm]{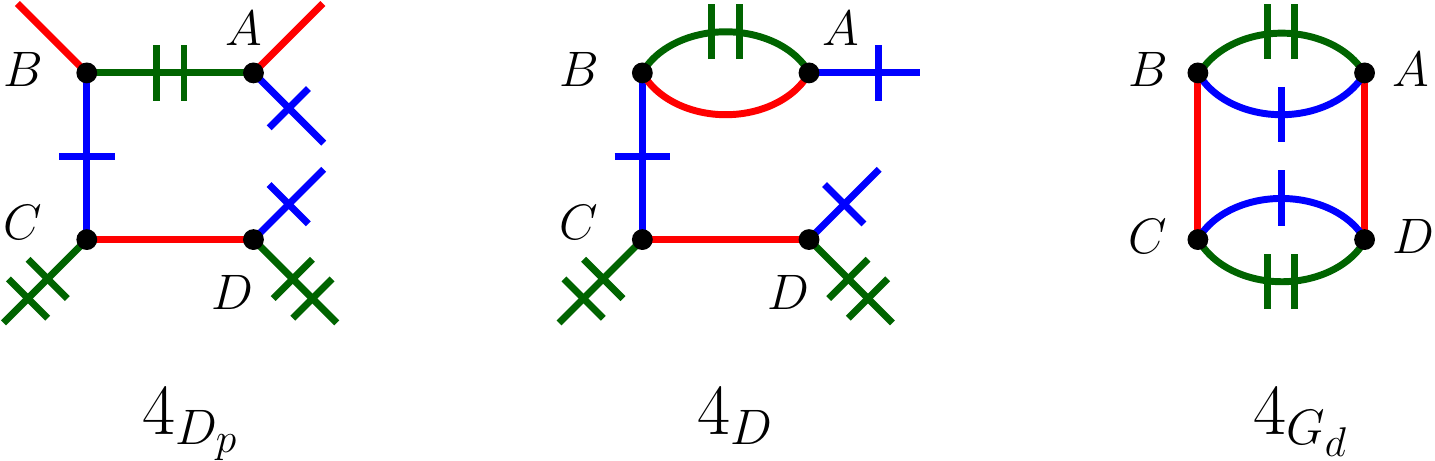}
%\vspace{-2cm}
\caption{Possible symmetry type graphs of the 4-orbit maps that correspond to the truncation of another map.}
\label{STG-4Dp_4D_4Gd}
\end{center}
\end{figure} 

Notice that the maps with the symmetry type $4_{D_p}$ have two orbits on their faces. In fact, one of its flag-orbits is completely contained in one of the face-orbits. However, this latter does not limit the choice of the face $\Psi_2\in V(\ma)$, corresponding to a flag $\Psi \in \fl(\Tr(\ma))$.
Let $(\A_0,\A_2,\A_1)$ be the partition of the set of flags $\fl(\Tr(\ma))$, and suppose that the flag $\Psi\in\A_0$ belongs to the flag-orbit $B$.  
 Then, by assembling the flags $\Psi\in\A_0$, $\Psi^2\in\A_2$, $\Psi^{2,1}\in\A_1$ of a map $\Tr(\ma)$ with symmetry type $4_{D_p}$, we obtain four different flags $\Phi_{\Psi},\Phi_{\Psi^{1}},\Phi_{\Psi^{1,0}}$ and $\Phi_{\Psi^{1,0,2,1,0,1,2}}$ in $\fl(\ma)$, depicted in Figure \ref{flagsTr(4Dp)}. Implying that $\ma$ is a 4-orbit map, with symmetry type $4_{D_p}$, as well.
\begin{figure}[htbp]
\begin{center}
\vspace{-.1cm}
\includegraphics[width=5cm]{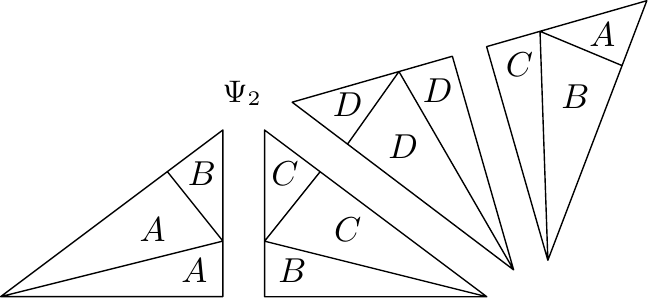}
%\vspace{-2cm}
\caption{Assembled flags in a map $\Tr(\ma)$, with symmetry type $4_{D_p}$ as truncation of another 4-orbit map. The element $\Psi_2 \in V(\ma)$ corresponds to the face of the base flag $\Psi\in\fl(\Tr(\ma))$.}
\label{flagsTr(4Dp)}
\end{center}
\end{figure} 

On the other hand, the maps with symmetry type $4_D$ and $4_{G_d}$ are face-transitive maps. Hence the arrangement of their flags is more natural than for $4_{D_p}$. In case that $\Tr(\ma)$ is of type $4_D$, we suppose that the flag $\Psi\in\A_0$ belongs to the flag-orbit $A$, consequently the flag $\Psi^2\in\A_2$ belongs to the flag-orbit $B$ and $\Psi^{2,1}\in\A_1$ belongs to the flag-orbit $C$. Inducing the local arrangement on the flags of $\Tr(\ma)$ as those in Figure \ref{flagsTr(4D-4Gd)} $(a)$. Where the other assembled flags 
\begin{itemize}
\item $\Phi_{\Psi^0}$, with the flag $\Psi^0\in\A_0$ in the orbit $B$ and the flags $\Psi^{0,2}\in\A_2$ and $\Psi^{0,2,1}\in\A_1$ in the orbit $A$;
\item  $\Phi_{\Psi^{0,1}}$, with the flags $\Psi^{0,1}\in\A_0$ and $\Psi^{0,1,2}\in\A_2$ in the orbit $C$ and the flag $\Psi^{0,1,2,1}\in\A_1$ in the orbit $B$; and
\item  $\Phi_{\Psi^{0,1,0}}$, with the flags $\Psi^{0,1,0}\in\A_0$, $\Psi^{0,1,0,2}\in\A_2$ and the flag $\Psi^{0,1,0,2,1}\in\A_1$ in the orbit $D$. 
\end{itemize}
\begin{figure}[htbp]
\begin{center}
%\vspace{-1.5cm}
\includegraphics[width=10cm]{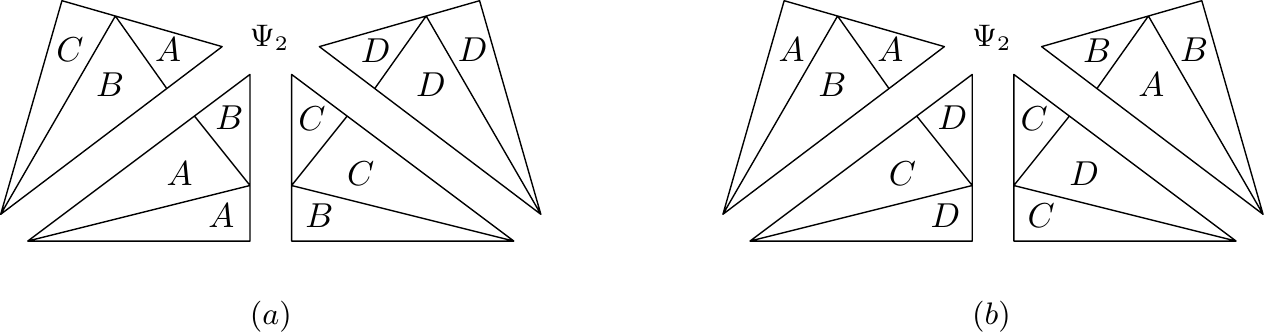}
%\vspace{-2cm}
\caption{Assembled flags in a map $\Tr(\ma)$, with $(a)$ symmetry type $4_D$, and $(b)$ with symmetry type $4_{G_d}$ as truncation of another 4-orbit map. The element $\Psi_2 \in V(\ma)$ corresponds to the face of the base flag $\Psi\in\fl(\Tr(\ma))$.}
\label{flagsTr(4D-4Gd)}
\end{center}
\end{figure} 
Proceeding in similar way with the flags of a map $\Tr(\ma)$ of type $4_{G_d}$. Assume that the flags $\Psi\in\A_0$ and $\Psi^{2,1}\in\A_1$ belong to the flag-orbit $A$, and the flag $\Psi^2\in\A_2$ belongs to the flag-orbit $B$. Then the four flags produced by this local arrangement on the flags of $\Tr(\ma)$ as those in Figure \ref{flagsTr(4D-4Gd)} $(b)$. Where the assembled flags are $\Phi_{\Psi}$;
\begin{itemize}
\item $\Phi_{\Psi^0}$, with the flags $\Psi^0\in\A_0$ and $\Psi^{0,2,1}\in\A_1$ are in the orbit $D$ and the flag $\Psi^{0,2}\in\A_2$ in the orbit $C$;
\item  $\Phi_{\Psi^{0,1}}$, with the flags $\Psi^{0,1}\in\A_0$ and $\Psi^{0,1,2,1}\in\A_1$ in the orbit $C$ and the flag $\Psi^{0,1,2}\in\A_2$ in the orbit $D$; and
\item  $\Phi_{\Psi^{0,1,0}}$, with the flags $\Psi^{0,1,0}\in\A_0$ and $\Psi^{0,1,0,2,1}\in\A_1$ in the orbit $B$ and the flag $\Psi^{0,1,0,2}\in\A_2$ in the orbit $A$. 
\end{itemize}
Therefore, if we assume that $\Tr(\ma)$ and $\ma$ are 4-orbit maps, then we obtain the following proposition.

\begin{prop}
\label{4-orbTr}
If the truncation $\Tr(\ma)$ of a 4-orbit map $\ma$ is a 4-orbit map, then exactly one of the following holds.
\begin{itemize}
\item[(i)] $\ma$ and $\Tr(\ma)$ are of type $4_{D_p}$,
\item[(ii)] $\ma$ is of type $4_E$ and $\Tr(\ma)$ is of type $4_D$, or
\item[(iii)] $\ma$ is of type $4_G$ and $\Tr(\ma)$ is of type $4_{G_d}$.
\end{itemize}
\end{prop}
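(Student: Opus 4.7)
The plan is to combine the structural constraints from Proposition \ref{3-0-comp}, Theorem \ref{face-orbT(M)} and the admissible $(1,2)$-factors of Figure \ref{2-factors(1,2)Tr(M)} to narrow the possibilities for $T(\Tr(\ma))$, and then to apply the local reassembly of triples $\{\Psi,\Psi^{2},\Psi^{2,1}\}$ into flags of $\ma$ to recover $T(\ma)$ in each surviving case.

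First I would observe that every vertex of $\Tr(\ma)$ is trivalent, so by Theorem \ref{face-orbT(M)} the $(1,2)$-factor of $T(\Tr(\ma))$ must be one of the quotients shown in Figure \ref{2-factors(1,2)Tr(M)}, while the $(0,2)$-factor must be one of those in Figure \ref{2-factors(0,2)}. A direct inspection of the twenty-two 4-vertex symmetry type graphs listed in \cite{MedSymTypeGph} shows that the only three satisfying both constraints are $4_{D_p}$, $4_D$ and $4_{G_d}$ of Figure \ref{STG-4Dp_4D_4Gd}.

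Next, for each admissible $T(\Tr(\ma))$ I would fix a base flag $\Psi\in\A_0$, assign it to a chosen orbit of $\Tr(\ma)$, and read the orbits of $\Psi^{2}\in\A_2$ and $\Psi^{2,1}\in\A_1$ off the edges of $T(\Tr(\ma))$. Propagating the labels along the words in $s_0,s_1,s_2$ of length at most eight that are needed to reach the remaining three assembled triples recovers the four flag-orbits of $\ma$ displayed in Figures \ref{flagsTr(4Dp)} and \ref{flagsTr(4D-4Gd)}. Converting every $s_i$-adjacency on $\fl(\ma)$ into the truncation generators $r_0,r_1,r_2$ via the relations of Section \ref{subsec:Trunc} then determines the colour-$i$ edges among these four orbit representatives, and I would match the resulting three-edge-coloured graph on four vertices against the list in \cite{MedSymTypeGph} to identify $T(\ma)$ as $4_{D_p}$, $4_E$ and $4_G$, respectively.

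The main obstacle will be checking that this procedure is well defined. The initial orbit assignment to the base flag is arbitrary, and different starting choices could a priori yield non-isomorphic $T(\ma)$'s, or even produce a contradiction when an orbit label fails to close around some walk. I would handle this by exhibiting, for each of the three candidate graphs, an automorphism of $T(\Tr(\ma))$ that permutes the admissible starting orbits, so that the output STG is unique up to relabelling. For $4_{D_p}$, which is not face-transitive, the auxiliary choice of which face-orbit of $\Tr(\ma)$ plays the role of $V(\ma)$ must also be shown to leave the result unchanged; this follows from the same labelling symmetry. Once these symmetries are identified, the verification that orbit labels close up consistently around every 6-cycle of the $(1,2)$-factor and every 4-cycle of the $(0,2)$-factor becomes a routine finite check.
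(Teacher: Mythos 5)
Your proposal is correct and follows essentially the same route as the paper: first restricting $T(\Tr(\ma))$ to $4_{D_p}$, $4_D$, $4_{G_d}$ via the admissible $(1,2)$- and $(0,2)$-factors, then reassembling the triples $\{\Psi,\Psi^{2},\Psi^{2,1}\}$ under the partition $(\A_0,\A_2,\A_1)$ with $\Psi_2\in V(\ma)$ to read off $T(\ma)$ in each case. Your explicit attention to well-definedness of the base-orbit and face-orbit choices (notably for the non-face-transitive type $4_{D_p}$) is a point the paper only remarks on in passing, but it does not change the argument.
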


Recall that given a 4-orbit map $\ma$, its truncated map $\Tr(\ma)$ might be a 6-orbit map. 
To find the possible symmetry types of maps with six orbits on its flags that would correspond to the symmetry type of a map $\Tr(\ma)$, when $\ma$ is a 4-orbit map, it is necessary to observe the following.

The symmetry type graph of a 6-orbit map $\Tr(\ma)$, constructed by a proper combination of the (1,2) 2-factors shown in Figure \ref{2-factors(1,2)Tr(M)}, contains either exactly one copy of the 2-factor (4), two copies of the 2-factor (3), or exactly one copy of the 2-factors (1), (2) and (3); joined consistently with the $(0,2)$ 2-factors in Figure \ref{2-factors(0,2)}. 
 In fact, there are sixteen symmetry type graphs with six vertices that satisfy any of the three later cases. 
Recall that in the local arrangement on the flags $\Psi, \Psi^2, \Psi^{2,1}\in \fl(\Tr(\ma))$ representing elements of the tripartition $(\A_0,\A_2,\A_1)$, we consider that the face $\Psi_2$ is an element of $V(\ma)$.

Thus, by looking at the symmetry type graphs with 6 vertices in Figures \ref{Trunc(2-orbM)STG}, \ref{STG6-orbV-trans} and \ref{STG6-orbNV-trans}, we deduce the following observations on the sixteen symmetry types, that follow the latter paragraph.

\begin{itemize}
\item[i)] $6_B$, has three face-orbits, each containing 2 flag-orbits (this corresponds to the truncation symmetry type graph of $2_{01}$);
\item[ii)] $6_{P_d}$, has three face-orbits, one with 4 flag-orbits and the others with one flag-orbit each;
\item[iii)] $6_G$, $6_H$, $6_{M_d}$, $6_{N_d}$, $6_{O_d}$, have two face-orbits, one with 4 flag-orbits and the other with 2 flag-orbits;  
\item[iv)] $6_{H_p}$, has two face-orbits, both with 3 flag-orbits;
\item[v)] $6_{J_d}$,  has two face-orbits, one with 5 flag-orbits and the other with one flag-orbit;  and
\item[vi)] $6_{B_p}$ $6_{F_d}$, $6_{G_p}$, $6_{M_{dp}}$, $6_{N_{dp}}$, $6_{O_{dp}}$, $6_{P_{dp}}$, that are transitive on their faces.
\end{itemize}

Therefore, the possible cases to be consider in manner to find whenever the map $\ma$ is a 4-orbit map such that the truncation map $\Tr(\ma)$ is a 6-orbit map are the cases $ii)$ and $iii)$ above. This is, suppose that the map $\Tr(\ma)$ has symmetry type either $6_G$, $6_H$, $6_{M_d}$, $6_{N_d}$, $6_{O_d}$, or $6_{P_d}$ (Figure \ref{Trunc(2-orbM)STG}). First consider the face-orbit in $\Tr(\ma)$ containing four flag-orbits, then
\begin{itemize}
\item[(a)] If $\Tr(\ma)$ is a 6-orbit map with symmetry type graph $6_G$, $6_{N_d}$ or $6_{O_d}$, and a partition $(\A_0,\A_2,\A_1)$ of its set of flags $\fl(\Tr(\ma))$. Let $\Psi \in \A_0$ be a flag in the orbit $A$ of $\fl(\Tr(\ma))$, then $\Psi^2\in\A_2$, and $\Psi^{2,1}\in\A_1$ are flags in the orbits $F$ and $E$, respectively in $\Tr(\ma)$. Hence, the induced flag $\Phi_{\Psi}\in\fl(\ma)$ corresponds to the first flag shown in Figure \ref{flagsTr(4)_6-orb} $(a)$.  
\item[(b)] Similarly, if $\Tr(\ma)$ is a 6-orbit map with symmetry type graph $6_H$, $6_{M_d}$ or $6_{P_d}$, and a partition $(\A_0,\A_2,\A_1)$ of its set of flags $\fl(\Tr(\ma))$. Let $\Psi \in \A_0$ be a flag in the orbit $A$ of $\fl(\Tr(\ma))$, then $\Psi^2\in\A_2$, and $\Psi^{2,1}\in\A_1$ are flags in the orbits $A$ and $B$, respectively in $\Tr(\ma)$. Hence, the induced flag $\Phi_{\Psi}\in\fl(\ma)$ corresponds to the first flag shown in Figure \ref{flagsTr(4)_6-orb} $(b)$.
\end{itemize}
\begin{figure}[htbp]
\begin{center}
%\vspace{-.3cm}
\includegraphics[width=8.5cm]{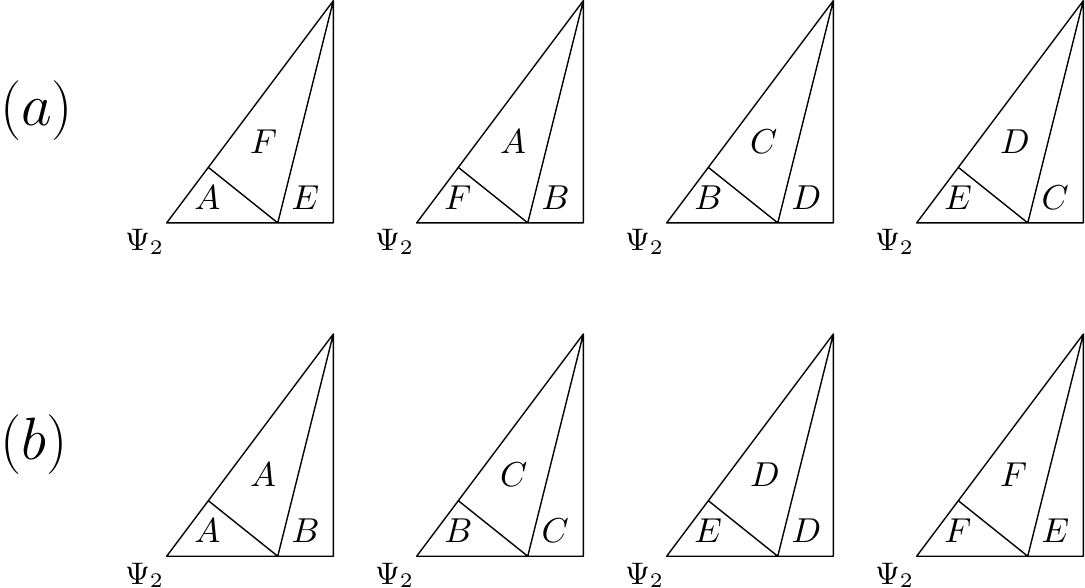}
%\vspace{-.3cm}
\caption{Assembled flags in a 6-orbit map $\Tr(\ma)$, $(a)$ with symmetry type graph $6_G$, $6_{N_d}$ or $6_{O_d}$, and $(b)$ with symmetry type graph $6_H$, $6_{M_d}$ or $6_{P_d}$,as truncation of a 4-orbit map. The element $\Psi_2 \in V(\ma)$ corresponds to face of the base flag $\Psi\in\fl(\Tr(\ma))$.}
\label{flagsTr(4)_6-orb}
\end{center}
\end{figure} 
The remaining three different assembled flags in both cases: $(a)$ and $(b)$, that represent other orbits of the elements in $\fl(\ma)$, different from $\Phi_{\Psi}$, are obtained straight-forward form the adjacencies given by the symmetry type graph of $\Tr(\ma)$ and the corresponding partition $(\A_0,\A_2,\A_1)$.

Consequently, we obtain the Proposition \ref{6-orbTr_4-orbM}, and the remaining symmetry type graphs will be analysed in Section \ref{Tr6-orb}.

\begin{prop}
\label{6-orbTr_4-orbM}
If the truncation $\Tr(\ma)$ of a 4-orbit map $\ma$ is a 6-orbit map, then exactly one of the following holds.
\begin{itemize}
\item[(i)] $\ma$ is of type $4_B$ and $\Tr(\ma)$ are of type $6_{P_d}$,
\item[(ii)] $\ma$ is of type $4_C$ and $\Tr(\ma)$ is of type $6_{O_d}$, 
\item[(iii)] $\ma$ is of type $4_G$ and $\Tr(\ma)$ is either of type $6_{N_d}$ or of type $6_{M_d}$, or
\item[(iv)] $\ma$ is of type $4_H$ and $\Tr(\ma)$ is either of type $6_G$ or of type $6_H$.
\end{itemize}
\end{prop}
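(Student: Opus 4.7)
The plan is to finish the case analysis initiated in the paragraphs preceding the statement: the discussion there has already reduced the possible symmetry types of a 6-orbit $\Tr(\ma)$ coming from a 4-orbit $\ma$ to exactly the six candidates $6_{P_d}$, $6_G$, $6_H$, $6_{M_d}$, $6_{N_d}$, $6_{O_d}$ (the cases labelled $ii)$ and $iii)$ in the enumeration), because the face $\Psi_2$ in the assembling $\Phi_{\Psi}=\{\Psi,\Psi^2,\Psi^{2,1}\}$ must sit in the face-orbit of $\Tr(\ma)$ identified with $V(\ma)$, and Theorem \ref{face-orbT(M)} forces that face-orbit to contain exactly four flag-orbits of $\Tr(\ma)$. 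For each of the six candidate types, I would exhibit the assembled flags explicitly and read off the symmetry type of $\ma$.

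Concretely, for each symmetry type of $\Tr(\ma)$ I would fix a base flag $\Psi\in\A_0$ in the orbit marked $A$ of the relevant face-orbit of size $4$, then use the $1$- and $2$-coloured edges of the symmetry type graph of $\Tr(\ma)$ to identify the orbits of $\Psi^2\in\A_2$ and $\Psi^{2,1}\in\A_1$; this produces the base assembled flag $\Phi_\Psi\in\fl(\ma)$, as already depicted in Figure \ref{flagsTr(4)_6-orb}. Next, using the adjacency rules $(\Phi,0)\cdot r_0=(\Phi^{s_1},0)$, $(\Phi,1)\cdot r_0=(\Phi^{s_0},1)$, $(\Phi,2)\cdot r_0=(\Phi^{s_1},2)$ together with $r_1,r_2$, I would compute the three remaining representative assembled flags $\Phi_{\Psi^0}$, $\Phi_{\Psi^{0,1}}$, $\Phi_{\Psi^{0,1,0}}$ by following $r_0, r_1, r_0$ in $\A_0$, each time reading off from the symmetry type graph which of the six orbits of $\Tr(\ma)$ the three constituent flags in $\A_0,\A_2,\A_1$ belong to. The triple of orbit-labels attached to each assembled flag then encodes a vertex of the symmetry type graph $T(\ma)$, and the $r_0,r_1,r_2$-adjacencies translate to $s_0,s_1,s_2$-adjacencies between these four vertices.

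For the three types $6_{N_d}$, $6_{O_d}$, $6_G$ the base flag $\Psi$ sits in orbit $A$ with $\Psi^2\in F$ and $\Psi^{2,1}\in E$ (case $(a)$ in Figure \ref{flagsTr(4)_6-orb}), and the four assembled flags separate into four distinct orbits; matching the resulting $3$-edge-coloured graph on four vertices against the list of symmetry type graphs of 4-orbit maps in \cite{MedSymTypeGph} gives types $4_G$, $4_C$, $4_H$, respectively. For $6_{M_d}$, $6_{P_d}$, $6_H$ one has $\Psi^2\in A$ and $\Psi^{2,1}\in B$ (case $(b)$), producing the graphs $4_G$, $4_B$, $4_H$. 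Combining the six identifications yields the four items $(i)$--$(iv)$; note in particular the two bifurcations for $\ma$ of type $4_G$ (giving either $6_{N_d}$ or $6_{M_d}$) and $\ma$ of type $4_H$ (giving either $6_G$ or $6_H$), which reflect the two distinct ways the $1$- and $2$-coloured semi-edges can attach to the orbits of a $4_G$- or $4_H$-symmetric map after truncation.

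The main obstacle is the bookkeeping: for each of the six candidate symmetry type graphs of $\Tr(\ma)$, one must chase the short $r$-words $0$, $0,1$, $0,1,0$, $0,2$, $0,1,2$, $0,1,0,2$, $0,2,1$, $0,1,2,1$, $0,1,0,2,1$ through the edge-colouring of the graph without mislabelling any of the six orbits, and then check that the resulting graph on four vertices is indeed isomorphic (as a properly $3$-edge-coloured cubic graph respecting the $(0,2)$ 2-factor in Figure \ref{2-factors(0,2)}) to the claimed $4_B$, $4_C$, $4_G$ or $4_H$. I would carry this out using a single diagram per case, analogous to Figures \ref{flagsTr(4Dp)} and \ref{flagsTr(4D-4Gd)}, so that the identification of $T(\ma)$ becomes visually transparent; the other, possibly subtler, check is verifying that the assignment of orbit-labels to the flags of $\Tr(\ma)$ is forced up to the symmetries of $T(\Tr(\ma))$, so that no case is accidentally missed.
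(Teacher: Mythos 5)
Your proposal is correct and takes essentially the same route as the paper: it restricts to the six candidate $6$-vertex types singled out in the preceding discussion, assembles $\Phi_{\Psi}=\{\Psi,\Psi^{2},\Psi^{2,1}\}$ and its three companions by following $r_0,r_1,r_0$ inside $\A_0$, and reads off $T(\ma)$ as a properly $3$-edge-coloured $4$-vertex graph, with orbit assignments and conclusions matching the paper's cases $(a)$ and $(b)$ and Figure \ref{flagsTr(4)_6-orb}. The only soft spot is your one-line justification of the reduction step: Theorem \ref{face-orbT(M)} by itself does not force the face-orbit containing $\Psi_2\in V(\ma)$ to carry exactly four flag-orbits (a priori $\A_0$, having $|\fl(\ma)|=2|\Gamma(\Tr(\ma))|$ elements, could meet as few as two of the six flag-orbits, and the partition $(\A_0,\A_2,\A_1)$ need not be $\Gamma(\Tr(\ma))$-invariant, as case $(b)$ with $\Psi,\Psi^2$ in the same orbit $A$ shows); the exclusion of $6_B$, $6_{H_p}$, $6_{J_d}$ and the face-transitive types really rests on the itemized inspection i)--vi) of the sixteen admissible $6$-vertex graphs, which your write-up in any case inherits from the paper's preceding paragraphs.
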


To conclude with the truncation of 4-orbit maps. There are twenty two truncated symmetry type graphs with 12 vertices that correspond to the twenty two symmetry type graphs with four vertices, determined by applying the algorithm in Figure \ref{TruncAlg} to each.

\subsubsection{Truncation of 5-orbit maps.}

Similarly to the previous section, if $\ma$ is a 5-orbit map, its truncation map $\Tr(\ma)$ is either a 5-orbit or a 15-orbit map, Theorem \ref{orbitsTr(M)}.

Notice that out of the thirteen different symmetry type graphs of 5-orbit maps, \cite{MedSymTypeGph}, the only possible combination of the (1,2) 2-factors  with 2 and 3 vertices in Figure \ref{2-factors(1,2)Tr(M)} is the symmetry type $5_{B_d}$ (Figure \ref{STG_5Bd}). 
This is, a 5-orbit map $\Tr(\ma)$ is the truncation of a 5-orbit map $\ma$, if $\Tr(\ma)$ is of type $5_{B_d}$. 
\begin{figure}[htbp]
\begin{center}
%\vspace{-.5cm}
\includegraphics[width=3cm]{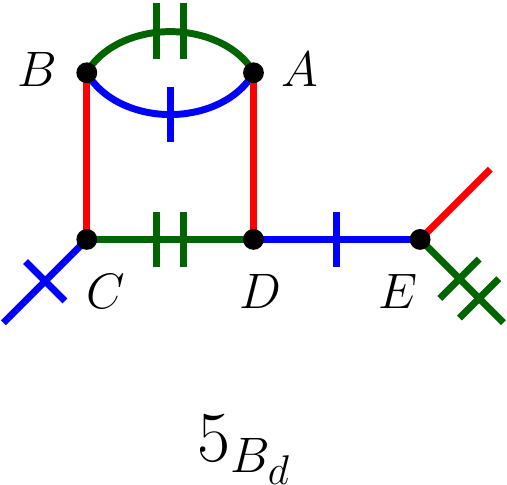}
%\vspace{-2cm}
\caption{Only possible symmetry type graph of a 5-orbit maps $\Tr(\ma)$, derived from another 5-orbit map.}
\label{STG_5Bd}
\end{center}
\end{figure} 

Let $(\A_0,\A_2,\A_1)$ be the partition on the flags of a map $\Tr(\ma)$ with symmetry type graph $5_{B_d}$, such that the flags $\Psi\in\A_0$, $\Psi^2\in\A_2$ and $\Psi^{2,1}\in\A_1$ induce the flag $\Phi_{\Psi}\in\fl(\ma)$.
 Clearly, the maps of type $5_{B_d}$ are face-transitive. Then, without lost of generality we can assume that the flag $\Psi\in\A_0$ is in the flag-orbit $A$ of $\Tr(\ma)$, where the face $\Psi_2$ is an element in $V(\ma)$. Consequently, the flags $\Psi^2\in\A_2$ and $\Psi^{2,1}\in\A_1$ are flags in the orbits $B$ and $A$, respectively. Therefore, $\Phi_{\Psi}\in\fl(\ma)$ represents an orbit of the flags of $\ma$, and the flags $\Phi_{\Psi^{1}},\Phi_{\Psi^{1,0}},\Phi_{\Psi^{0}},\Phi_{\Psi^{0,1}}\in\fl(\ma)$, with $\Psi^{1}$ a flag in the orbit $B$, $\Psi^{1,0}$ a flag in the orbit $C$, $\Psi^{0}$ a flag in the orbit $D$ and $\Psi^{0,1}$ a flag in the orbit $E$ of the elements in $\fl(\Tr(\ma))$, represent four other different flag-orbits of the set $\fl(\ma)$ as those in Figure \ref{flagsTr5-orb}.
\begin{figure}[htbp]
\begin{center}
%\vspace{-1.5cm}
\includegraphics[width=5cm]{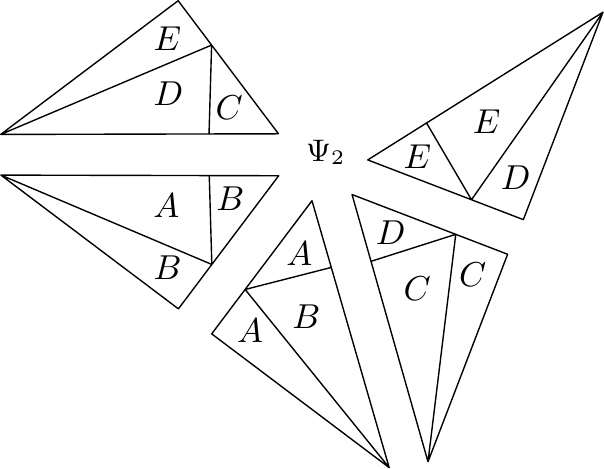}
%\vspace{-2cm}
\caption{Assembled flags of a 5-orbit map $\Tr(\ma)$ as truncation of another 5-orbit map, where $\Psi_2 \in V(\ma)$ is the corresponding face in $\Tr(\ma)$ in a base flag $\Psi\in\fl(\Tr(\ma))$.}
\label{flagsTr5-orb}
\end{center}
\end{figure} 
Obtaining the following Proposition.

\begin{prop}
If the truncation $\Tr(\ma)$ of a 5-orbit map is a 5-orbit map, then $\ma$ is of type $5_{B_p}$ and $\Tr(\ma)$ is of type $5_{B_d}$.
\end{prop}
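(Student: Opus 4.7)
The plan is to combine Theorem \ref{orbitsTr(M)} with the constraints imposed by Proposition \ref{3-0-comp} and Figure \ref{2-factors(1,2)Tr(M)} to narrow down the symmetry type of $\Tr(\ma)$, and then to use the local assembly of flags $\Psi,\Psi^2,\Psi^{2,1}\in\fl(\Tr(\ma))$ into $\Phi_{\Psi}\in\fl(\ma)$ to recover the symmetry type of $\ma$.

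First, I would observe that since $\frac{3\cdot 5}{2}$ is not an integer, Theorem \ref{orbitsTr(M)} forces $\Tr(\ma)$ to be either a 5-orbit or a 15-orbit map. Assuming the former, the symmetry type graph $T(\Tr(\ma))$ must be a cubic graph on 5 vertices whose $(1,2)$ 2-factors are combinations of the pieces in Figure \ref{2-factors(1,2)Tr(M)}; the only way to partition 5 vertices into such pieces is one copy of the 2-vertex piece together with one copy of the 3-vertex piece. Consulting the list of thirteen symmetry type graphs on 5 vertices from \cite{MedSymTypeGph} and discarding those incompatible with the $(0,2)$ 2-factors of Figure \ref{2-factors(0,2)}, the unique survivor is $5_{B_d}$.

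Next, since maps of type $5_{B_d}$ are face-transitive, I may assume without loss of generality that a base flag $\Psi\in\A_0$ lies in orbit $A$, with the face $\Psi_2$ corresponding to a vertex of $\ma$. Reading off the adjacencies in $T(\Tr(\ma))=5_{B_d}$ gives $\Psi^2\in\A_2$ in orbit $B$ and $\Psi^{2,1}\in\A_1$ in orbit $A$, so $\Phi_{\Psi}\in\fl(\ma)$ represents one orbit of $\fl(\ma)$. I would then apply the generators $r_0,r_1$ to obtain the flags $\Psi^1,\Psi^{1,0},\Psi^0,\Psi^{0,1}$ in orbits $B,C,D,E$ respectively, and assemble these into four additional flags $\Phi_{\Psi^1},\Phi_{\Psi^{1,0}},\Phi_{\Psi^0},\Phi_{\Psi^{0,1}}\in\fl(\ma)$, exactly as depicted in Figure \ref{flagsTr5-orb}; these five flags represent five distinct orbits of $\fl(\ma)$.

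Finally, I would read off the $\ma$-adjacencies between these five assembled flags by tracing how the generators $s_0,s_1,s_2$ of $\Mon(\ma)$ act in terms of $r_0,r_1,r_2$ and the partition $(\A_0,\A_2,\A_1)$; comparing the resulting quotient graph with the list of 5-orbit symmetry types in \cite{MedSymTypeGph} identifies it as $5_{B_p}$. The main obstacle, and the only step that is not a formal check, is the enumeration/elimination step that singles out $5_{B_d}$ among the thirteen 5-orbit symmetry type graphs; once that is done, the face-transitivity of $5_{B_d}$ makes the assembly of flags essentially canonical, and the identification of $\ma$ as type $5_{B_p}$ becomes a direct bookkeeping exercise.
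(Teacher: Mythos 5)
Your proposal is correct and follows essentially the same route as the paper: restrict to 5 or 15 orbits via Theorem \ref{orbitsTr(M)}, single out $5_{B_d}$ as the only 5-vertex symmetry type graph compatible with the $(1,2)$ 2-factors of Figure \ref{2-factors(1,2)Tr(M)}, then use face-transitivity to fix a base flag and assemble $\Psi,\Psi^2,\Psi^{2,1}$ into flags of $\ma$ lying in five distinct orbits, recovering type $5_{B_p}$. The only cosmetic difference is that you phrase the elimination step as a partition-counting argument before falling back on the explicit list of thirteen graphs, which is exactly the check the paper performs.
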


On the other hand, there are thirteen truncated symmetry type graphs with 15 vertices that correspond to the all symmetry type graphs with five vertices shown in \cite{MedSymTypeGph}, determined by applying the algorithm in Figure \ref{TruncAlg} to each.

\subsubsection{Truncation of 6-orbit maps.}
\label{Tr6-orb}

Given a 6-orbit map $\ma$, by Theorem \ref{orbitsTr(M)} we know that its truncation map $\Tr(\ma)$ has either 6, 9 or 18 orbits on its flags. In this section we go through the remaining nine possible symmetry type of 6-orbit maps:  $6_{H_p}$, $6_{J_d}$, $6_{B_p}$, $6_{F_d}$, $6_{G_p}$, $6_{M_{dp}}$, $6_{N_{dp}}$, $6_{O_{dp}}$ and $6_{P_{dp}}$ (mentioned in Section \ref{Tr4-orb}); that might correspond to the truncation map $\Tr(\ma)$ of a map $\ma$. Further on, we find the possible symmetry type graphs with 9 vertices that correspond to the truncation $\Tr(\ma)$ of a map $\ma$ and determine which of them are related to the truncation of a 6-orbit or 9-orbit map.

%In fact, 
Let $\Tr(\ma)$ be a map with symmetry type graph either $6_{B_p}$, $6_{G_p}$, $6_{N_{dp}}$, $6_{O_{dp}}$ (Figure \ref{STG6-orbV-trans}), $6_{F_d}$, $6_{H_p}$, $6_{J_d}$, $6_{M_{dp}}$, or $6_{P_{dp}}$ (Figure \ref{STG6-orbNV-trans}). Consider, the partition $(\A_0,\A_2,\A_1)$ on the flags of a map $\Tr(\ma)$, such that the flags $\Psi\in\A_0$, $\Psi^2\in\A_2$ and $\Psi^{2,1}\in\A_1$ induce the flag $\Phi_{\Psi}\in\fl(\ma)$.

Observe that a map $\Tr(\ma)$ with symmetry type graph $6_{B_p}$, $6_{G_p}$, $6_{N_{dp}}$, or $6_{O_{dp}}$ is transitive on its sets of faces and vertices, under the action of its automorphism group, Figure \ref{STG6-orbV-trans}.
\begin{figure}[htbp]
\begin{center}
%\vspace{-1.5cm}
\includegraphics[width=7cm]{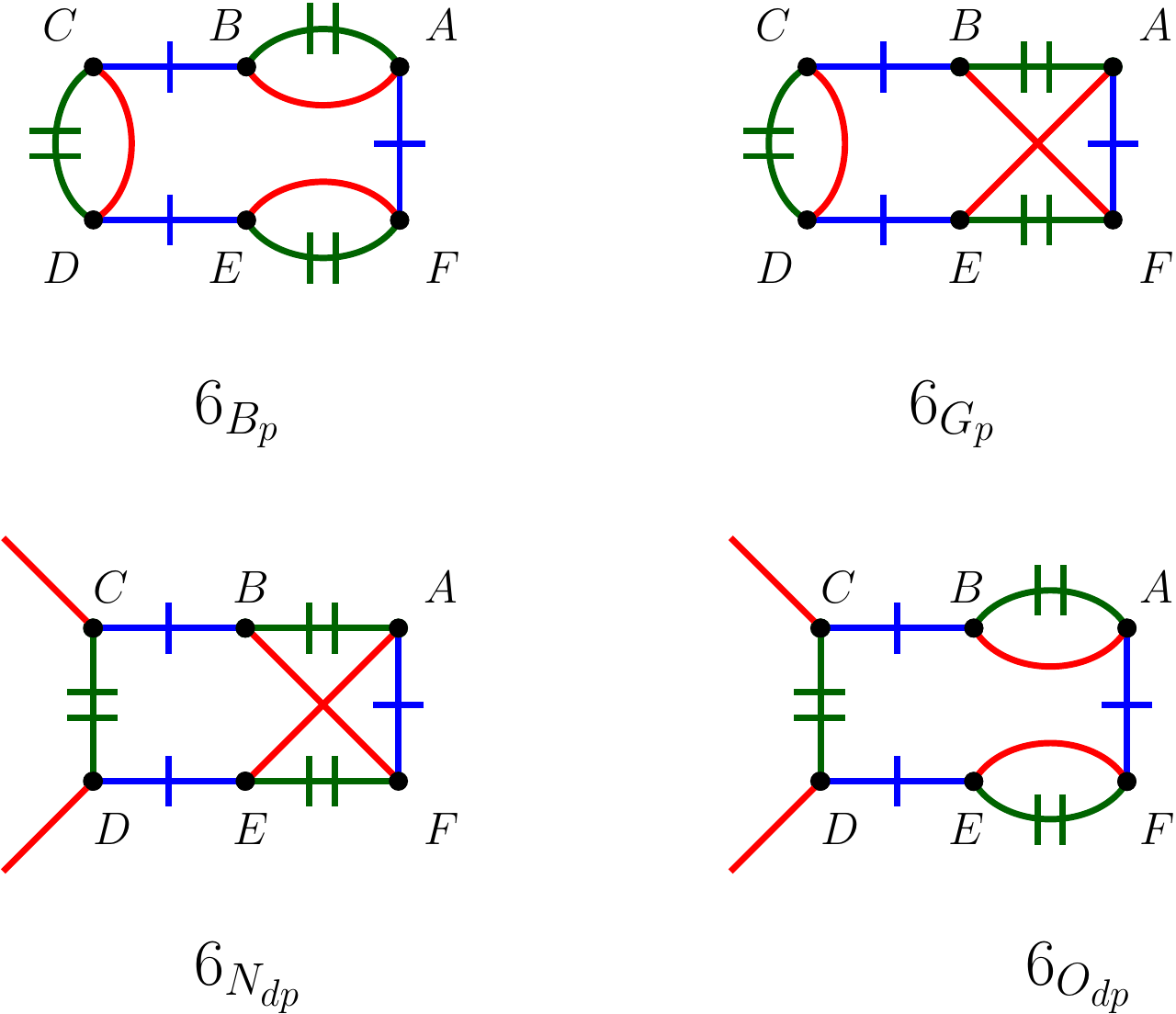}
%\vspace{-2cm}
\caption{Possible symmetry type graphs of 6-orbit vertex-transitive maps $\Tr(\ma)$, where $\ma$ is a 6-orbit map.}
\label{STG6-orbV-trans}
\end{center}
\end{figure}  
Then, we can choose any flag $\Psi$ in the flag-orbit $A$ of $\fl(\Tr(\ma))$, such that the face $\Psi_2$ is an element  in $V(\ma)$. Thus, the flags $\Psi^2\in\A_2$ and $\Psi^{2,1}\in\A_1$ are in the flag-orbits $B$ and $C$ of $\fl(\Tr(\ma))$, respectively. Hence, the flag $\Phi_{\Psi}\in\fl(\ma)$ is assembled as the very left flag in Figure \ref{flagsTr6-orb_V-trans}.
\begin{figure}[htbp]
\begin{center}
%\vspace{-1.5cm}
\includegraphics[width=10.5cm]{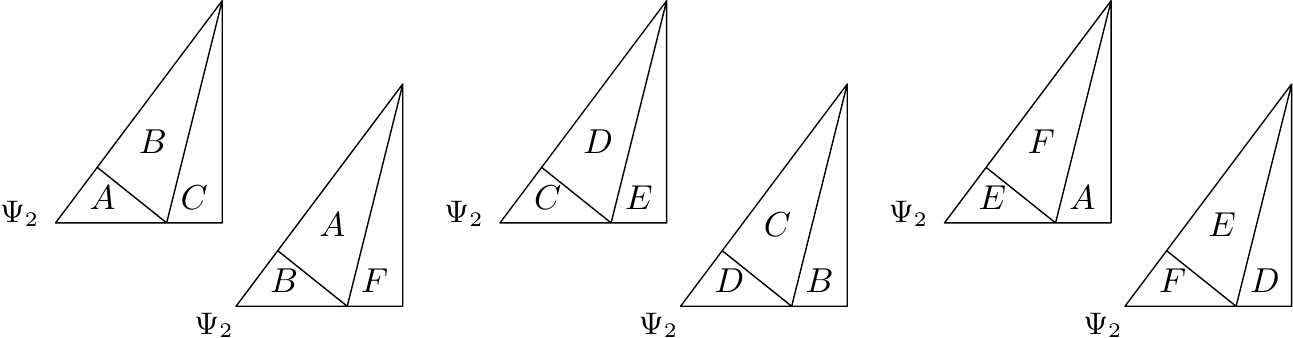}
%\vspace{-2cm}
\caption{Assembled flags of a vertex- and face-transitive 6-orbit map $\Tr(\ma)$ as truncation of a 6-orbit map, where $\Psi_2 \in V(\ma)$ is the corresponding face in $\Tr(\ma)$ in a base flag $\Psi\in\fl(\Tr(\ma))$.}
\label{flagsTr6-orb_V-trans}
\end{center}
\end{figure} 
The remaining five orbits are determined by the partition $(\A_0,\A_2,\A_1)$ of the flags in $\fl(\Tr(\ma))$ and the adjacency of the flag orbits given by each symmetry type graph in Figure \ref{STG6-orbV-trans}.

Furthermore, note that the maps with symmetry graph $6_{F_d}$ have three orbits on its vertices, while those with symmetry types $6_{H_p}$, $6_{J_d}$, $6_{M_{dp}}$ and $6_{P_{dp}}$, have two orbits on their vertices, see Figure \ref{STG6-orbNV-trans}.
\begin{figure}[htbp]
\begin{center}
%\vspace{-1cm}
\includegraphics[width=7cm]{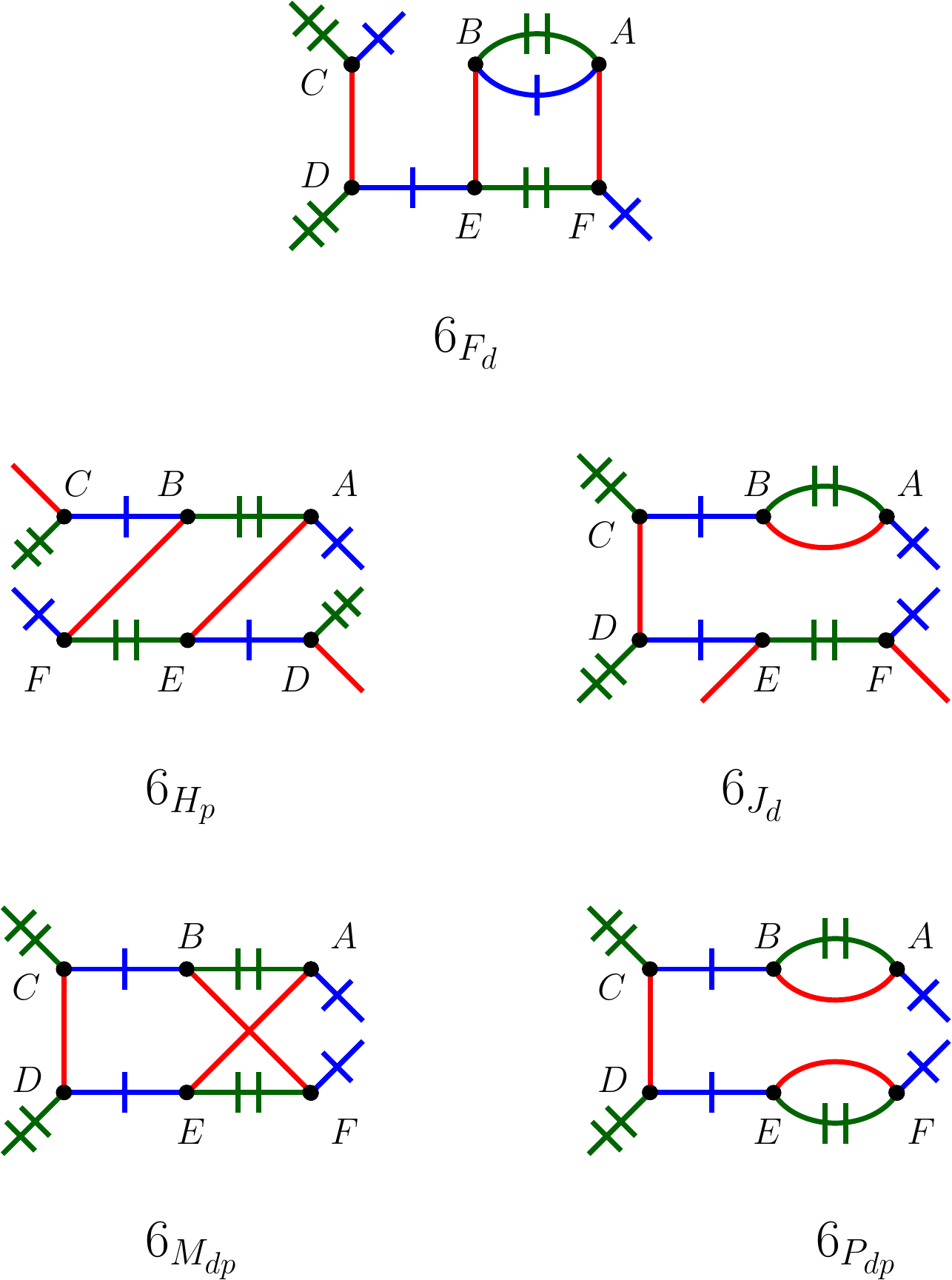}
%\vspace{-2cm}
\caption{Possible symmetry type graphs of 6-orbit maps $\Tr(\ma)$ which are not vertex-transitive, derived from 6-orbit maps $\ma$.}
\label{STG6-orbNV-trans}
\end{center}
\end{figure} 
Then, given the partition $(\A_0,\A_2,\A_1)$ of the flags of $\Tr(\ma)$. Suppose that the flag $\Psi\in\A_0$  is a flag in the flag-orbit $F$ in $\Tr(\ma)$. Assume that the face $\Psi_2$ is an element in $V(\ma)$ and $\Tr(\ma)$ has symmetry type graph either
\begin{itemize}
\item[(a)] $6_{F_d}$. Then, the flags $\Psi^2\in\A_2$ and $\Psi^{2,1}\in\A_1$ are in the flag-orbits $E$ and $B$ of $\fl(\Tr(\ma))$, respectively. Inducing a flag $\Phi_{\Psi}\in\fl(\ma)$ (as the very right one of the Figure \ref{flagsTr6-orb_NV-trans} $(a)$), that represents a flag-orbit of $\ma$. Furthermore, is not hard to see that the assembled flags $\Phi_{\Psi^{0}},\Phi_{\Psi^{0,1}},\Phi_{\Psi^{0,1,0}},\Phi_{\Psi^{0,1,0,1}},\Phi_{\Psi^{0,1,0,1,0}}\in\fl(\ma)$, with $\Psi^{0},\Psi^{0,1},\Psi^{0,1,0},\Psi^{0,1,0,1},\Psi^{0,1,0,1,0}\in\A_0$, represent the other five flag-orbits of $\ma$ following the adjacency on the flags of $\Tr(\ma)$, given by its symmetry type graph; or
\item[(b)] $6_{H_p}$, $6_{J_d}$, $6_{M_{dp}}$ or $6_{P_{dp}}$. Then, the flags $\Psi^2\in\A_2$ and $\Psi^{2,1}\in\A_1$ are in the flag-orbits $E$ and $D$ of $\fl(\Tr(\ma))$, respectively. Inducing a flag $\Phi_{\Psi}\in\fl(\ma)$ (as the very right one of the Figure \ref{flagsTr6-orb_NV-trans} $(b)$), that represents a flag-orbit of $\ma$. Respecting the partition $(\A_0,\A_2,\A_1)$, and following the adjacency on the flags in $\fl(\Tr(\ma))$ given by the respective symmetry type graph of $\Tr(\ma)$, it is not hard to find that the remaining five different assembled flags in the Figure \ref{flagsTr6-orb_NV-trans} $(b)$ represent other flag-orbits of $\fl(\ma)$, different of $\Phi_{\Psi}$.
\end{itemize}

\begin{figure}[htbp]
\begin{center}
%\vspace{-1.5cm}
\includegraphics[width=10.5cm]{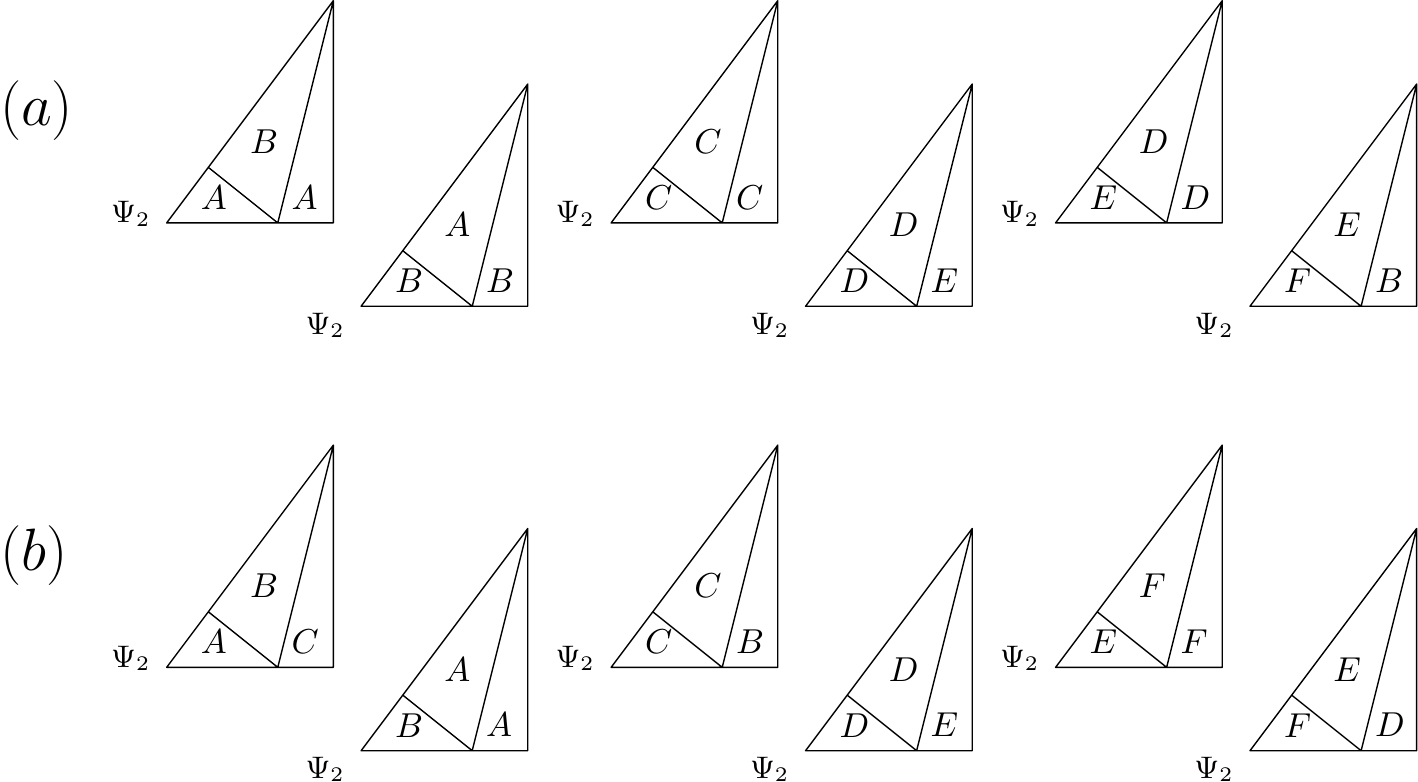}
%\vspace{-2cm}
\caption{Assembled flags of a no vertex-transitive 6-orbit map $\Tr(\ma)$ with symmetry type $6_{F_d}$ in (a), $6_{H_p}$, $6_{J_d}$, $6_{M_{dp}}$ or $6_{P_{dp}}$ in (b), as truncation of another 6-orbit map, where $\Psi_2 \in V(\ma)$ is the corresponding face in $\Tr(\ma)$ in a base flag $\Psi\in\fl(\Tr(\ma))$.}
\label{flagsTr6-orb_NV-trans}
\end{center}
\end{figure} 

Concluding with Proposition \ref{6-orbTr}. 

\begin{prop}
\label{6-orbTr}
If the truncation $\Tr(\ma)$ of a 6-orbit map is a 6-orbit map, then one of the following holds.
\begin{itemize}
\item[(i)] $\ma$ and $\Tr(\ma)$ are of type $6_{B_p}$, 
\item[(ii)] $\ma$ and $\Tr(\ma)$ are of type $6_{G_p}$, 
\item[(iii)] $\ma$ and $\Tr(\ma)$ are of type $6_{H_p}$,
\item[(iv)] $\ma$ is of type $6_{J_p}$ and $\Tr(\ma)$ is of type $6_{J_d}$,
\item[(v)] $\ma$ is of type $6_{M_{opp}}$ and $\Tr(\ma)$ is of type $6_{N_{dp}}$, 
\item[(vi)] $\ma$ is of type $6_{N_{opp}}$ and $\Tr(\ma)$ is of type $6_{F_d}$ or of type $6_{M_{dp}}$,
\item[(vii)] $\ma$ is of type $6_{O_{opp}}$ and $\Tr(\ma)$ is of type $6_{P_{dp}}$, or
\item[(viii)] $\ma$ is of type $6_{P_{opp}}$ and $\Tr(\ma)$ is of type $6_{O_{dp}}$.
\end{itemize}
\end{prop}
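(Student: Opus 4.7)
The plan is to check, for each of the nine candidate symmetry types of $\Tr(\ma)$ that survive the constraints of the beginning of Section \ref{Tr6-orb}, which 6-vertex symmetry type graph is forced on the underlying map $\ma$. The machinery is already in place: by Proposition \ref{3-0-comp} we have the partition $(\A_0,\A_2,\A_1)$ of $\fl(\Tr(\ma))$, and the local rule $\Phi_{\Psi}=\{\Psi,\Psi^{2},\Psi^{2,1}\}$ (with $\Psi\in\A_0$) assembles the flags of $\Tr(\ma)$ back into flags of $\ma$. The core computation, repeated once for each candidate, is to pick a base flag $\Psi\in\A_0$, read from the symmetry type graph of $\Tr(\ma)$ which orbits $\Psi^{2}$ and $\Psi^{2,1}$ live in, then propagate through the $0$-coloured matching to label the five other assembled flags $\Phi_{\Psi^{0}},\Phi_{\Psi^{0,1}},\ldots$ and determine their pairwise $0$-, $1$-, $2$-adjacencies.

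First I would split the nine candidates into the vertex- and face-transitive block $\{6_{B_p},6_{G_p},6_{N_{dp}},6_{O_{dp}}\}$ shown in Figure \ref{STG6-orbV-trans}, and the face-transitive but not vertex-transitive block $\{6_{F_d},6_{H_p},6_{J_d},6_{M_{dp}},6_{P_{dp}}\}$ shown in Figure \ref{STG6-orbNV-trans}. In the first block the base flag $\Psi\in\A_0$ may be taken in orbit $A$; the assembly of Figure \ref{flagsTr6-orb_V-trans} then exhibits the six flag-orbits of $\ma$, and reading off the $0$-, $1$- and $2$-adjacencies among the assembled flags directly yields the symmetry types $6_{B_p}$, $6_{G_p}$, $6_{M_{opp}}$, $6_{O_{opp}}$ claimed in items (i), (ii), (v), (vii). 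For the second block, because $\Tr(\ma)$ has two (resp.\ three) vertex-orbits, the base flag must be chosen in the vertex-orbit whose representative face $\Psi_2$ is compatible with being an element of $V(\ma)$; this forces $\Psi$ into orbit $F$ in each case, and Figure \ref{flagsTr6-orb_NV-trans}(a), (b) produces the six assembled flags for $6_{F_d}$ and for the common case $\{6_{H_p},6_{J_d},6_{M_{dp}},6_{P_{dp}}\}$. Reading off adjacencies gives items (iii), (iv), (vi), (viii).

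The main obstacle is showing that in each of these nine symbolic computations the six assembled flags really represent six \emph{distinct} orbits of $\Gamma(\ma)$, so that $\ma$ has exactly six flag-orbits and not fewer, and that the 6-vertex cubic $3$-edge-coloured graph thus produced is uniquely identified among the list of $6$-orbit symmetry type graphs. The distinctness uses the fact that every $\gamma\in\Gamma(\ma)$ commutes with the generators $s_0,s_1,s_2$ of $\Mon(\ma)$ (Section \ref{sec:Maps}) and therefore lifts to a colour-preserving automorphism of $\mathcal{G}_{\Tr(\ma)}$ that preserves the partition $(\A_0,\A_2,\A_1)$; conversely, any such partition-preserving element of $\Gamma(\Tr(\ma))$ descends to an automorphism of $\ma$. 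Hence two assembled flags $\Phi_{\Psi}$ and $\Phi_{\Psi'}$ lie in the same $\Gamma(\ma)$-orbit iff $\Psi$ and $\Psi'$ lie in the same $\Gamma(\Tr(\ma))$-orbit, and their distinctness is read straight from the symmetry type graph of $\Tr(\ma)$.

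Once the six orbits are separated, the coloured adjacencies of $T(\ma)$ are the ones already recorded in Figures \ref{flagsTr6-orb_V-trans} and \ref{flagsTr6-orb_NV-trans}, and matching against the catalogue of symmetry type graphs of $6$-orbit maps (in particular the ``opposite-dual'' types $6_{M_{opp}}, 6_{N_{opp}}, 6_{O_{opp}}, 6_{P_{opp}}$ appearing in items (v)--(viii)) identifies $T(\ma)$ uniquely in each case, completing the eight cases of the proposition.
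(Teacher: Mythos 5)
Your proposal follows essentially the same route as the paper: the partition $(\A_0,\A_2,\A_1)$ from Proposition \ref{3-0-comp}, the assembly rule $\Phi_{\Psi}=\{\Psi,\Psi^{2},\Psi^{2,1}\}$ with $\Psi_2\in V(\ma)$, the same split of the nine candidate types into the vertex-transitive block of Figure \ref{STG6-orbV-trans} and the non-vertex-transitive block of Figure \ref{STG6-orbNV-trans}, the same base-flag choices (orbit $A$, resp.\ orbit $F$), and the same read-off of $T(\ma)$ from the assembled flags of Figures \ref{flagsTr6-orb_V-trans} and \ref{flagsTr6-orb_NV-trans}.

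One caution on the extra distinctness lemma you add (which the paper leaves implicit): the biconditional ``$\Phi_{\Psi}$ and $\Phi_{\Psi'}$ lie in the same $\Gamma(\ma)$-orbit iff $\Psi$ and $\Psi'$ lie in the same $\Gamma(\Tr(\ma))$-orbit'' is too strong in the ``if'' direction. When both $\ma$ and $\Tr(\ma)$ are $6$-orbit maps one has $|\Gamma(\Tr(\ma))|=3|\Gamma(\ma)|$, so the lift of $\Gamma(\ma)$ is the index-$3$ subgroup of partition-preserving automorphisms, and a $\Gamma(\Tr(\ma))$-orbit may meet $\A_0$ in more than one $\Gamma(\ma)$-orbit; an automorphism of $\Tr(\ma)$ carrying $\Psi$ to $\Psi'$ need not preserve $(\A_0,\A_2,\A_1)$ and hence need not descend. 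Only the valid direction (distinct $\Gamma(\Tr(\ma))$-orbits of base flags force distinct $\Gamma(\ma)$-orbits of assembled flags) is needed, and it suffices here because in each of the nine cases the six base flags in $\A_0$ can be checked from the symmetry type graph of $\Tr(\ma)$ to lie in pairwise distinct orbits; you should state the lemma in that one-directional form.
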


The truncation of a 6-orbit map $\ma$ can be a 9-orbit or a 18-orbit map. We shall say that there are seventy possible symmetry type graphs with 18 vertices, obtained by applying once more the so called algorithm in Figure \ref{TruncAlg} to each symmetry type graph $T(\ma)$ with 6 vertices. (Forty-two of the symmetry type graphs with 6 vertices are depicted together in this paper and in \cite{MedSymTypeGph}, the other twenty-two can easily be founded by applying operations described in \cite{MedSymTypeGph}, as well.)

Finally, to determine which of the many symmetry types of 9-orbit maps correspond to be the symmetry type of the truncation $\Tr(\ma)$ of a 6-orbit map $\ma$, we recall the necessary 2-factors in Figures \ref{2-factors(0,2)} and \ref{2-factors(1,2)Tr(M)}, that the symmetry type graph of the truncation of a map must contain. There are exactly ten possible symmetry type graphs with 9 vertices that satisfy the conditions to correspond to the symmetry type of a 9-orbit map $\Tr(\ma)$, as the truncation on either a 3-orbit, a 6-orbit or a 9-orbit map $\ma$, see Figure \ref{STG9-orbTr}.

\begin{figure}[htbp]
\begin{center}
%\vspace{-1.5cm}
\includegraphics[width=11cm]{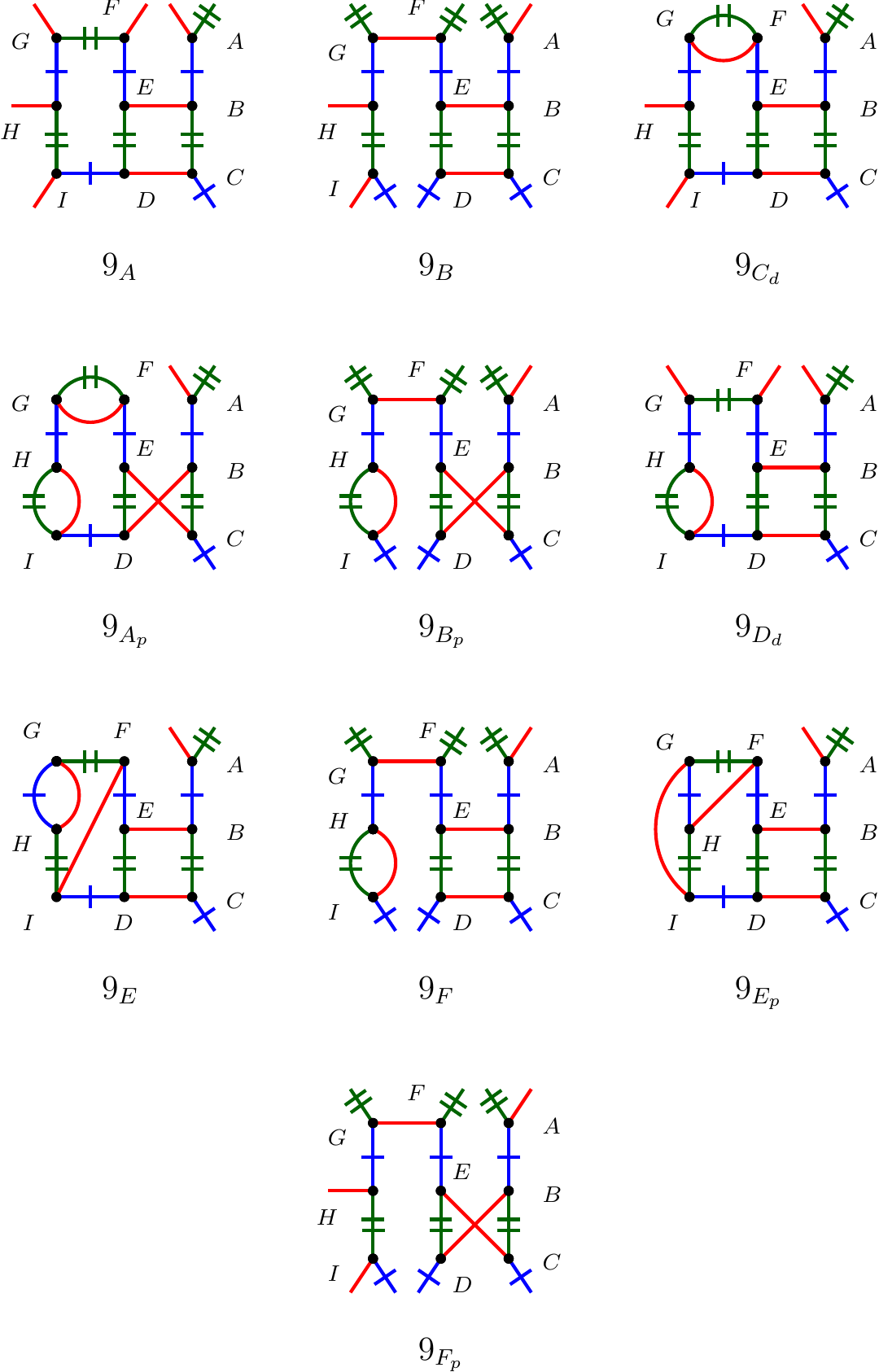}
%\vspace{-2cm}
\caption{Symmetry type graphs with 9 vertices of truncated 3-orbit, 6-orbit or 9-orbit maps.}
\label{STG9-orbTr}
\end{center}
\end{figure}

In fact, applying the algorithm in Figure \ref{TruncAlg} to the three symmetry type graphs of 3-orbit maps, it can be seen that the three graphs labelled by $9_A$, $9_B$ y $9_{C_d}$ in Figure \ref{STG9-orbTr} are isomorphic to the truncated symmetry type graphs of the symmetry type graphs $3^0$, $3^2$ and $3^{02}$, respectively. 

Moreover, if a map $\Tr(\ma)$ of type $9_A$, $9_B$ or $9_{C_d}$ has the partition $(\A_0,\A_2,\A_1)$ of the set $\fl(\Tr(\ma))$ such that a flag $\Psi\in\A_0$, is in the flag-orbit $A$ and the face $\Psi_2$ is an element of $V(\ma)$. Then, the flags $\Psi^2\in\A_2$ and $\Psi^{2,1}\in\A_1$ are in the flag-orbits $A$ and $B$. Consequently, respecting the partition $(\A_0,\A_2,\A_1)$, and following the adjacency on the flags in $\fl(\Tr(\ma))$ given by the symmetry type graphs $9_A$ or $9_{C_d}$, or $9_B$, 
we obtain six the different flags that represent the flag-orbits of $\ma$, as those shown in Figure \ref{flagsTr9-orb} $(a)$ or $(b)$, respectively.
\begin{figure}[htbp]
\begin{center}
%\vspace{-1.5cm}
\includegraphics[width=10.5cm]{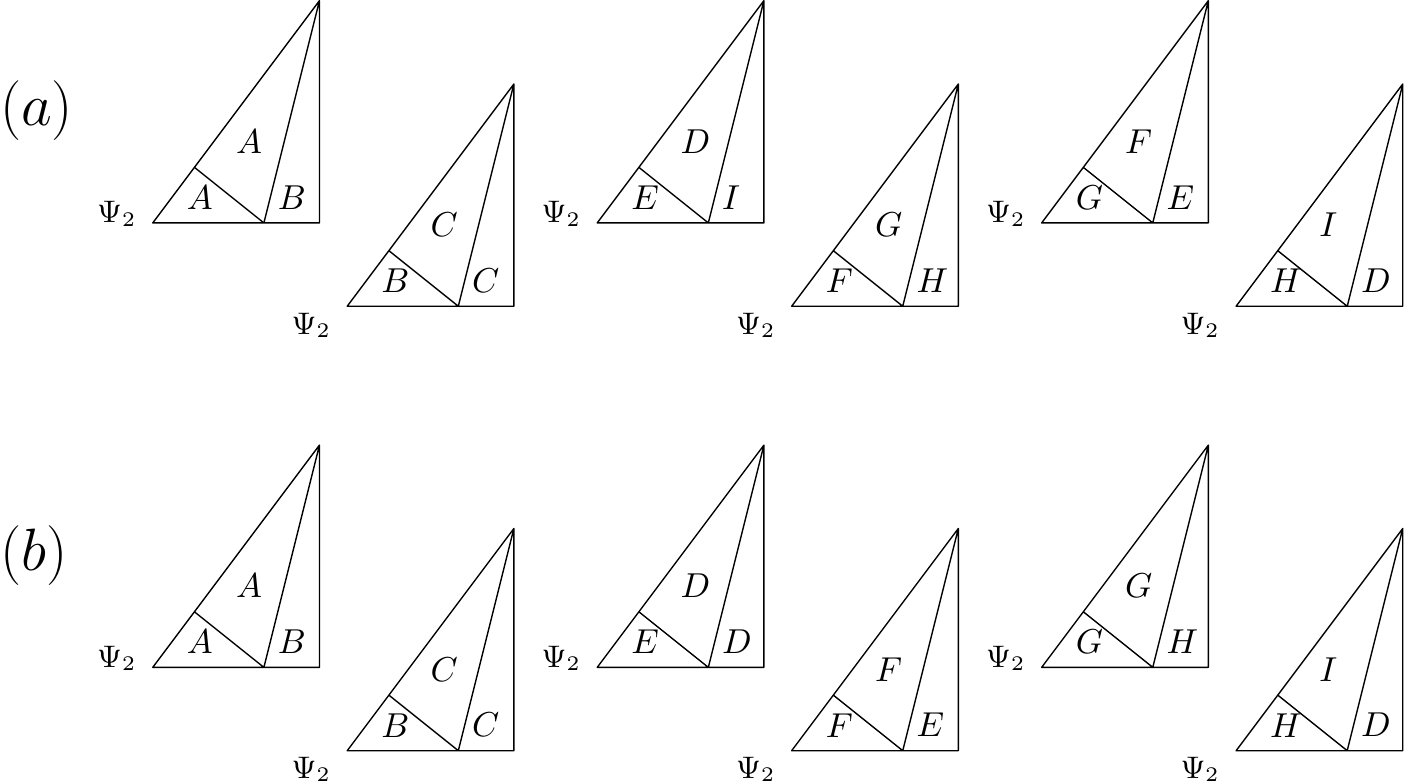}
%\vspace{-2cm}
\caption{Assembled flags of maps $\Tr(\ma)$ with $(a)$ symmetry type graph $9_A$ or $9_{C_d}$, and $(b)$ with symmetry type graph $9_B$, as truncation of a 6-orbit map, where $\Psi_2 \in V(\ma)$ is the corresponding face in $\Tr(\ma)$ in a base flag $\Psi\in\fl(\Tr(\ma))$.}
\label{flagsTr9-orb}
\end{center}
\end{figure} 
Inducing the results in Proposition \ref{9-orbTr(6)}.

\begin{prop}
\label{9-orbTr(6)}
If the truncation $\Tr(\ma)$ of a 6-orbit map is a 9-orbit map, then one of the following holds.
\begin{itemize}
\item[(i)] $\ma$ is of type $6_{D}$ and $\Tr(\ma)$ is of type $9_A$,
\item[(ii)] $\ma$ is of type $6_{F}$ and $\Tr(\ma)$ is of type $9_B$, or
\item[(iii)] $\ma$ is of type $6_{M_{opp}}$ and $\Tr(\ma)$ is of type $9_{C_d}$.
\end{itemize}
\end{prop}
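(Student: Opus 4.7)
The plan is a case analysis over the ten candidate symmetry type graphs displayed in Figure \ref{STG9-orbTr}, which by the constraints on the 2-factors (Figures \ref{2-factors(0,2)} and \ref{2-factors(1,2)Tr(M)}) are the only 9-vertex graphs that can arise as $T(\Tr(\ma))$ for any map $\ma$ whose truncation is a 9-orbit map. By Proposition \ref{3-0-comp} the flag graph $\mathcal{G}_{\Tr(\ma)}$ admits a tripartition $(\A_0,\A_2,\A_1)$ quotienting onto $3^0$, and each triple $(\Psi,\Psi^2,\Psi^{2,1})$ with $\Psi\in\A_0$ assembles into a single flag $\Phi_\Psi\in\fl(\ma)$ under the local arrangement of \cite{k-orbitM}. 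Since $\ma$ is assumed to be a 6-orbit map, the 9 flag-orbits of $\Tr(\ma)$ must collapse into exactly 6 distinct assembled $\ma$-orbits.

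For each candidate $9_A$, $9_B$, $9_{C_d}$, I would fix a base flag $\Psi\in\A_0$, assign its orbit label in accordance with the partition structure, and then trace the induced orbit labels of $\Psi^2\in\A_2$ and $\Psi^{2,1}\in\A_1$ from the adjacencies of the candidate symmetry type graph. The construction illustrated in Figure \ref{flagsTr9-orb} shows that the six assembled flags $\Phi_\Psi$, $\Phi_{\Psi^0}$, $\Phi_{\Psi^{0,1}}$, $\Phi_{\Psi^{0,1,0}}$, $\Phi_{\Psi^{0,1,0,1}}$, $\Phi_{\Psi^{0,1,0,1,0}}$ occupy six pairwise distinct $\ma$-orbits; for each of the other seven graphs in Figure \ref{STG9-orbTr} the analogous tracing instead yields nine distinct assembled orbits, so those cases belong to Proposition \ref{9-orbTr(9)} rather than to this one.

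The remaining step is to identify each of the three resulting 6-orbit structures with a named 6-vertex symmetry type graph. The $s_i$-adjacencies between assembled flags of $\ma$ are read off from the $r_i$-adjacencies of $\Tr(\ma)$ via the formulas that precede Figure \ref{TruncAlg}. Performing this translation, the six assembled flags for $9_A$ reproduce the adjacency pattern of $6_D$, for $9_B$ the pattern of $6_F$, and for $9_{C_d}$ the pattern of $6_{M_{opp}}$, giving the three cases listed. The main obstacle is the bookkeeping of orbit labels through the assembly: a misassigned label on $\Psi^2$ or $\Psi^{2,1}$ would produce a partition inconsistent with the $3^0$ quotient of Proposition \ref{3-0-comp}, so the correct assignment must be cross-checked against both the 0-2 and 1-2 2-factor constraints of the candidate graph and against Theorem \ref{face-orbT(M)} applied to the vertex-orbits of $\ma$ (equivalently, the face-orbits of $\Tr(\ma)$ corresponding to truncated vertices).
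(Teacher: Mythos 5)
Your proposal follows essentially the same route as the paper: restrict to the ten candidate 9-vertex symmetry type graphs compatible with the $(0,2)$ and $(1,2)$ 2-factor constraints, use the tripartition $(\A_0,\A_2,\A_1)$ of Proposition \ref{3-0-comp} to assemble triples $\Psi,\Psi^2,\Psi^{2,1}$ into flags of $\ma$, observe that only $9_A$, $9_B$ and $9_{C_d}$ yield six distinct assembled orbits (the other seven yielding nine, hence falling under Proposition \ref{9-orbTr(9)}), and read off the types $6_D$, $6_F$, $6_{M_{opp}}$ from the induced adjacencies as in Figure \ref{flagsTr9-orb}. This matches the paper's argument; your added cross-check against Theorem \ref{face-orbT(M)} is a harmless extra verification.
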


It can be seen that the remaining symmetry type graphs in Figure \ref{STG9-orbTr} of 9-orbit maps, different from $9_A$, $9_B$ and $9_{C_d}$, correspond to the truncation of other 9-orbit maps, as it is shown in the following section.

\newpage
\subsubsection{Truncation of 7-orbit and 9-orbit maps.}

In this section we complete the study on truncation of $k$-orbit maps with $k \leq 7$ and $k=9$. 
%Due to space, we leave on aside the truncation of 8-orbit maps. 
%However, 
In what follows we will study the symmetry types of 7-orbit and 9-orbit truncation maps $\Tr(\ma)$,
when $\ma$ is a 7-orbit or a 9-orbit map, respectively.

Once again, one can find all truncated symmetry type graphs with 21 and with 27 vertices, associated to each symmetry type graphs with 7 and 9 vertices, respectively, by applying the algorithm in Figure \ref{TruncAlg}. 

By a proper combination of the (1,2) 2-factors in Figure \ref{2-factors(1,2)Tr(M)}, it can be seen that there are exactly two different symmetry type graphs with 7 vertices: $7_J$ and $7_{J_p}$, that correspond to the symmetry type of the truncation of a 7-orbit map, depicted in the Figure \ref{STG_7J-7Jp}.
\begin{figure}[htbp]
\begin{center}
%\vspace{-1.5cm}
\includegraphics[width=10cm]{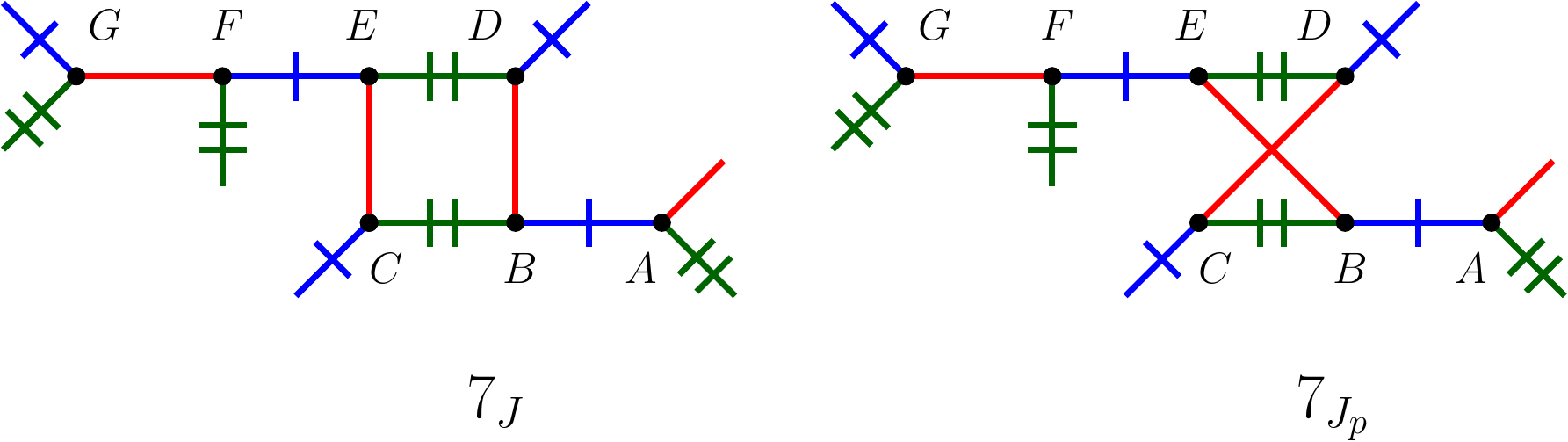}
%\vspace{-2cm}
\caption{Symmetry type graphs with 7 vertices of truncated 7-orbit maps.}
\label{STG_7J-7Jp}
\end{center}
\end{figure}
Let $\Tr(\ma)$ be a map with symmetry type either $7_J$ or $7_{J_p}$, and $(\A_0,\A_2,\A_1)$ the partition on the flags of $\Tr(\ma)$, such that the the face $\Psi_2$ of the flag $\Psi\in\A_0$ is an element of $V(\ma)$. Then, if $\Psi$ is a flag in the orbit $A$, the flags $\Psi^2\in\A_2$ and $\Psi^{2,1}\in\A_1$ are in the orbits $A$ and $B$, respectively. Thus, the assembled flag $\Phi_{\Psi}\in\fl(\ma)$ represents the very left flag of the 7 different flags in Figure \ref{flagsTr7-orb}. 
\begin{figure}[htbp]
\begin{center}
%\vspace{-1.5cm}
\includegraphics[width=12.5cm]{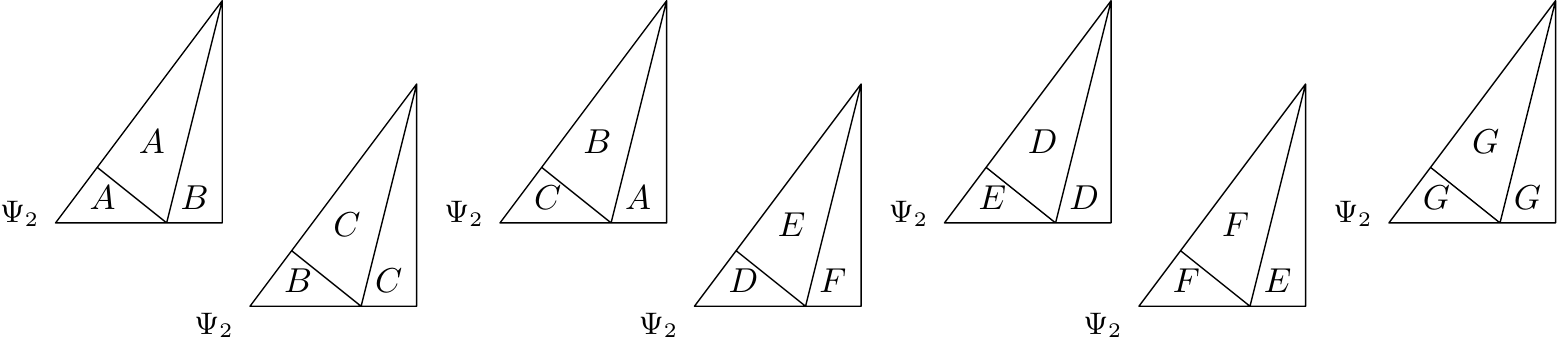}
%\vspace{-2cm}
\caption{Assembled flags of 7-orbit maps $\Tr(\ma)$ as truncation of another 7-orbit map.}
\label{flagsTr7-orb}
\end{center}
\end{figure} 
Moreover, if we respect the partition $(\A_0,\A_2,\A_1)$ of the set $\fl(\Tr(\ma))$, the adjacency of the flags given by the symmetry type graph, and the previous assumption on the flag $\Psi\in\A_0$, we can easily find the remaining six flags in Figure \ref{flagsTr7-orb} that will correspond to representing flags in $\fl(\ma)$, according to the symmetry type graph of $\Tr(\ma)$. In this way it follows Proposition \ref{7-orbTr(7)}.

\begin{prop}
\label{7-orbTr(7)}
If the truncation $\Tr(\ma)$ of a 7-orbit map is again a 7-orbit map, then one of the following holds.
\begin{itemize}
\item[(i)] $\ma$ is of type $7_{K}$ and $\Tr(\ma)$ is of type $7_{J}$, or
\item[(ii)] $\ma$ is of type $7_{L}$ and $\Tr(\ma)$ is of type $7_{J_p}$.
\end{itemize}
\end{prop}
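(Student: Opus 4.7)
The plan is to follow the same template used for the earlier propositions (\ref{4-orbTr}--\ref{9-orbTr(6)}): first restrict the symmetry type of $\Tr(\ma)$, then use the canonical tripartition from Proposition \ref{3-0-comp} to reconstruct the symmetry type of $\ma$. I would proceed in three steps.

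First, I would enumerate the possible symmetry types of $\Tr(\ma)$ with $7$ vertices. Any such graph must be a cubic, properly $3$-edge-coloured graph whose $(0,2)$-coloured $2$-factor is built from the pieces in Figure \ref{2-factors(0,2)} and whose $(1,2)$-coloured $2$-factor is built from the pieces in Figure \ref{2-factors(1,2)Tr(M)}. Since $7$ is odd, the $(1,2)$ factor must combine at least one piece of odd order; a direct combinatorial check (matching the three pieces of Figure \ref{2-factors(1,2)Tr(M)} of total vertex-count $7$ and then adding a consistent $0$-coloured perfect matching) shows that the only connected realisations are the two graphs $7_J$ and $7_{J_p}$ displayed in Figure \ref{STG_7J-7Jp}. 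This is the step I expect to be the most tedious, because one has to be sure no $7$-vertex candidate is overlooked; the argument is a finite case check driven entirely by the two factor catalogues.

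Second, for each of the two candidates I would run the local arrangement exactly as in the preceding subsections. Namely, fix the partition $(\A_0,\A_2,\A_1)$ of $\fl(\Tr(\ma))$ supplied by Proposition \ref{3-0-comp}, choose a base flag $\Psi\in\A_0$ in the orbit $A$, and declare $\Psi_2\in V(\ma)$. Reading off the symmetry type graph of $\Tr(\ma)$ at the vertex $A$, one finds that $\Psi^{2}\in\A_2$ lies in orbit $A$ and $\Psi^{2,1}\in\A_1$ lies in orbit $B$ in both cases, so the triple $\{\Psi,\Psi^{2},\Psi^{2,1}\}$ assembles the flag $\Phi_{\Psi}\in\fl(\ma)$ shown as the leftmost flag of Figure \ref{flagsTr7-orb}. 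Applying the monodromy generators $s_0=r_0$ and $s_1=r_1r_2r_1$ of $\ma$ (recovered from the adjacency rules for $r_0,r_1,r_2$ in the truncation) and respecting the partition, I would then trace the other six representatives $\Phi_{\Psi^{0}},\Phi_{\Psi^{0,1}},\Phi_{\Psi^{0,1,0}},\ldots$, checking at each step which of the orbits $A,\ldots,G$ of $\Tr(\ma)$ the assembled pieces land in.

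Third, from this bookkeeping I would read off the induced $s_0$- and $s_1$-adjacencies between the seven reconstructed flag-orbits of $\ma$, together with the $s_2$-adjacencies that are forced by the face identification $\Psi_2\in V(\ma)$. Comparing the resulting quotient graph with the catalogue of $7$-orbit symmetry type graphs from \cite{MedSymTypeGph}, one case yields exactly the graph $7_K$ and the other yields $7_L$, giving conclusions (i) and (ii) respectively. The only real obstruction is bookkeeping discipline: once the partition and the base orbit are fixed, each adjacency is forced, so the classification drops out directly, matching the six assembled flags in Figure \ref{flagsTr7-orb} beyond $\Phi_\Psi$.
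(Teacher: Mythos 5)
Your proposal follows essentially the same route as the paper: restrict the $7$-vertex symmetry type graphs of $\Tr(\ma)$ to $7_J$ and $7_{J_p}$ via the $(0,2)$ and $(1,2)$ $2$-factor catalogues, then use the tripartition $(\A_0,\A_2,\A_1)$ of Proposition \ref{3-0-comp} and the local arrangement to assemble the seven flags of Figure \ref{flagsTr7-orb} and read off the types $7_K$ and $7_L$. One small correction to your monodromy dictionary: with the base flag $\Psi=(\Phi,0)\in\A_0$, the adjacency rules give $\Psi\cdot r_0=(\Phi^{s_1},0)$ and $\Psi\cdot r_1=(\Phi^{s_2},0)$, so $r_0$ and $r_1$ restricted to $\A_0$ realise $s_1$ and $s_2$ (not $s_0$), while $s_0$ is realised by the word $r_2r_1r_0r_1r_2$; this does not affect your conclusion, since your actual tracing of $\Phi_{\Psi^{0}},\Phi_{\Psi^{0,1}},\ldots$ orbit by orbit is exactly the paper's bookkeeping.
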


On the other hand, recall the symmetry type graphs of 9-orbit maps, different of $9_A$, $9_B$ and $9_{C_d}$, in Figure \ref{STG9-orbTr}.
Let $\Tr(\ma)$ be a map with symmetry type $9_{A_p}$, $9_{B_p}$, $9_{D_d}$, $9_E$, $9_{E_p}$, $9_F$, or $9_{F_p}$, and consider the partition $(\A_0,\A_2,\A_1)$ of the set $\fl(\Tr(\ma))$ such that a flag $\Psi\in\A_0$ implies that $\Psi^2\in\A_2$ and $\Psi^{2,1}\in\A_1$.
Suppose that the face $\Psi_2$ is an element of $V(\ma)$, and that the flags $\Psi$ and $\Psi^2$ are flags in the flag-orbit $A$ of $\Tr(\ma)$, so as $\Psi^{2,1}$ is a flag in the flag-orbit $B$. In particular, respecting the partition $(\A_0,\A_2,\A_1)$ and following the adjacencies of the flag with respect to $\Psi$, $\Psi^2$ and $\Psi^{2,1}$ as before. If $\Tr(\ma)$ has symmetry type graph 
\begin{itemize}
\item[(a)] $9_{A_p}$, $9_{D_d}$, $9_E$, or $9_{E_p}$; or
\item[(b)] $9_{B_p}$, $9_F$, or $9_{F_p}$.
 \end{itemize}
Then we can obtain two sets of 9 different flags of the truncation map $\Tr(\ma)$ as those in Figure \ref{flags9Tr9-orb}.
\begin{figure}[htbp]
\begin{center}
%\vspace{-1.5cm}
\includegraphics[width=10cm]{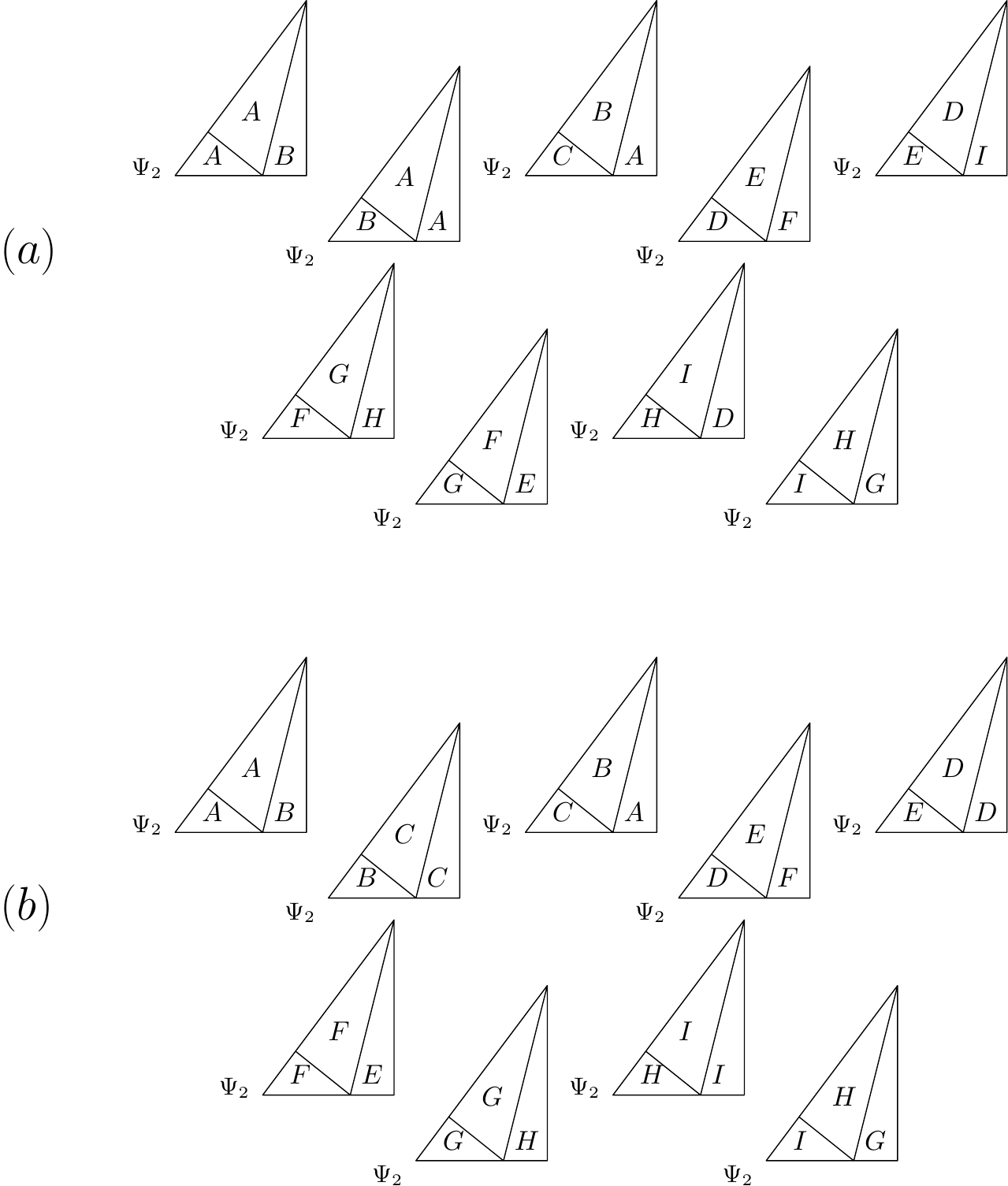}
%\vspace{-2cm}
\caption{Assembled flags of maps $\Tr(\ma)$ with $(a)$ symmetry type graph $9_{A_p}$, $9_{D_d}$, $9_E$, or $9_{E_p}$, and (b) with symmetry type graph $9_{B_p}$, $9_F$, or $9_{F_p}$, as truncation of a 9-orbit map, where $\Psi_2 \in V(\ma)$ is the corresponding face in $\Tr(\ma)$ in a base flag $\Psi\in\fl(\Tr(\ma))$.}
\label{flags9Tr9-orb}
\end{center}
\end{figure} 
Inducing the following Proposition. 

\begin{prop}
\label{9-orbTr(9)}
If the truncation $\Tr(\ma)$ of a 9-orbit map is again a 9-orbit map, then one of the following holds.
\begin{itemize}
\item[(i)] $\ma$ is of type $9_{C}$ and $\Tr(\ma)$ is of type $9_{B_p}$,
\item[(ii)] $\ma$ is of type $9_{G_p}$ and $\Tr(\ma)$ is of type $9_{E_p}$, 
\item[(iii)] $\ma$ is of type $9_{H_p}$ and $\Tr(\ma)$ is of type $9_{A_p}$.
\item[(iv)] $\ma$ is of type $9_{I}$ and $\Tr(\ma)$ is of type $9_{D_d}$,
\item[(v)] $\ma$ is of type $9_{J}$ and $\Tr(\ma)$ is of type $9_{E}$,
\item[(vi)] $\ma$ is of type $9_{K}$ and $\Tr(\ma)$ is of type $9_{F}$, or
\item[(vii)] $\ma$ is of type $9_{L}$ and $\Tr(\ma)$ is of type $9_{F_p}$.
\end{itemize}
\end{prop}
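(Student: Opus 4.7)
The plan is to repeat the local-arrangement strategy used in Propositions \ref{4-orbTr}--\ref{9-orbTr(6)}. By Proposition \ref{3-0-comp}, the flag graph $\mathcal{G}_{\Tr(\ma)}$ admits a tripartition $(\A_0,\A_2,\A_1)$ such that every flag $\Psi\in\A_0$ has $\Psi^2\in\A_2$ and $\Psi^{2,1}\in\A_1$, and the three flags $\Psi,\Psi^2,\Psi^{2,1}$ assemble into a single flag $\Phi_{\Psi}\in\fl(\ma)$ whose face $\Psi_2$ is chosen to be an element of $V(\ma)$. Since the symmetry type graph of any truncation must be built from the $(0,2)$ 2-factors of Figure \ref{2-factors(0,2)} and the $(1,2)$ 2-factors of Figure \ref{2-factors(1,2)Tr(M)}, the only ten candidate graphs with 9 vertices are those displayed in Figure \ref{STG9-orbTr}. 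Three of them, $9_A$, $9_B$ and $9_{C_d}$, were already shown in Proposition \ref{9-orbTr(6)} to arise as truncations of 6-orbit maps (and of 3-orbit maps, via the algorithm of Figure \ref{TruncAlg}). Hence the only remaining candidates for a 9-orbit $\Tr(\ma)$ coming from a 9-orbit map $\ma$ are exactly the seven graphs $9_{A_p}$, $9_{B_p}$, $9_{D_d}$, $9_E$, $9_{E_p}$, $9_F$, $9_{F_p}$ listed in the statement.

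For each of these seven candidate types I would fix a base flag $\Psi\in\A_0$ sitting in a convenient orbit (the orbit labelled $A$ in the figures), read off from the corresponding symmetry type graph the orbits of $\Psi^2\in\A_2$ and $\Psi^{2,1}\in\A_1$, and then walk along the generators $r_0$ and $r_1$ to generate nine distinct assembled flags $\Phi_{\Psi},\Phi_{\Psi^0},\Phi_{\Psi^{0,1}},\ldots\in\fl(\ma)$. The adjacencies between these assembled flags, induced by $s_0$, $s_1$ and by the three-step path $r_2r_1r_2$ that encodes $s_2$ in $\ma$, determine a cubic properly 3-edge-coloured graph on 9 vertices, which must be the symmetry type graph $T(\ma)$. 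Matching this graph against the catalogue of 9-vertex symmetry type graphs produces the symmetry type of $\ma$ claimed in the proposition.

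As shown in Figure \ref{flags9Tr9-orb}, the seven candidate graphs split into two geometric configurations. In configuration $(a)$, corresponding to $\Tr(\ma)$ of type $9_{A_p}$, $9_{D_d}$, $9_E$ or $9_{E_p}$, the assembly yields respectively $T(\ma)=9_{H_p}$, $9_I$, $9_J$, $9_{G_p}$. In configuration $(b)$, corresponding to $\Tr(\ma)$ of type $9_{B_p}$, $9_F$ or $9_{F_p}$, the assembly yields $T(\ma)=9_C$, $9_K$, $9_L$ respectively. Each case reduces to an essentially mechanical verification: trace out the nine assembled flags respecting the partition $(\A_0,\A_2,\A_1)$ and the adjacencies dictated by the given $T(\Tr(\ma))$, then identify the resulting 3-edge-coloured cubic graph.

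The main obstacle is purely bookkeeping. The seven candidate graphs on 9 vertices differ only in subtle local features (whether certain semi-edges appear, or whether a given pair of flag-orbits is connected by a $0$- or a $1$-edge), and the induced 9-vertex graphs for $\ma$ are drawn from a similarly dense catalogue, so one must be meticulous in verifying that the assembled adjacencies really pick out a unique type and are not isomorphic to another candidate. Since Proposition \ref{3-0-comp} forces the tripartition and the requirement that $\Psi_2\in V(\ma)$ fixes the base orbit choices, each candidate $T(\Tr(\ma))$ uniquely determines $T(\ma)$, so once the case analysis is carried through carefully no ambiguity remains.
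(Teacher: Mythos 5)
Your proposal follows the paper's own argument essentially verbatim: restrict to the ten 9-vertex candidates compatible with the $(0,2)$ and $(1,2)$ 2-factors, discard $9_A$, $9_B$, $9_{C_d}$ as arising from 6-orbit (or 3-orbit) maps, and for the remaining seven types use the tripartition $(\A_0,\A_2,\A_1)$ and the local assembly of $\Psi,\Psi^2,\Psi^{2,1}$ into $\Phi_\Psi$ with $\Psi_2\in V(\ma)$, split into the same two configurations $(a)$ and $(b)$ of Figure \ref{flags9Tr9-orb}. The resulting correspondences you list agree with the proposition, so the proposal is correct and takes the same route as the paper.
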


In this way, way conclude with the results obtained so far on truncation of $k$-orbit maps, with $k =1, \dots,6, 7, 9$ are listed in the Tables \ref{Tr(M)1} and \ref{Tr(M)}.

%\newpage
\begin{table}[h!]
\centering
%\vspace{-2cm}
\begin{tabular}{|c|c|c|c|}
\hline
Sym type  & \multicolumn{3}{|c|}{Sym type of $\Tr(\ma)$ with}     \\
 of $\ma$  &  $k$ orbits & $\frac{3k}{2}$ orbits &      $3k$-orbits    \\  
\hline
        1        &       1          &              ---                  &        $3^0$        \\ 
        2        &       2          &              ---                  &  $6_{N_d}$       \\
    $2_0$    &      ---         &              ---                  &   $6_G$             \\
    $2_2$    & $2_{12}$   &              ---                  & $6_{M_d}$       \\
    $2_1$    &      ---         &              ---                  &  $6_{O_d}$       \\ 
 $2_{01}$  &  $2_0$       &           $3^0$              &   $6_B$              \\ 
 $2_{12}$  &      ---         &              ---                  &    $6_{P_d}$     \\
 $2_{02}$  &      ---         &              ---                  &   $6_H$             \\
   $3^0$     &      ---         &              ---                  &  $9_A$             \\
   $3^2$     &      ---         &              ---                  &  $9_B$             \\
 $3^{02}$  & $3^{02}$  &              ---                  & $9_{C_d}$             \\
    $4_B$     &     ---          &        $6_{P_d}$          &      $12_B$        \\
    $4_C$     &     ---          &        $6_{O_d}$         &      $12_C$        \\
$4_{D_p}$ & $4_{D_p}$ &               ---                &      $12_D$        \\
 $4_E$        & $4_D$        &               ---                &      $12_E$        \\ 
 $4_G$        &     ---          & $6_{N_d}$, $6_{M_d}$ & $12_G$        \\
$4_{G_d}$  &   $4_G$     &                ---               &      $12_H$        \\
    $4_H$     &     ---         &  $6_G$, $6_H$            &      $12_C$        \\      
$5_{B_p}$  & $5_{B_d}$  &               ---               &      $15_A$        \\
 \hline
 \end{tabular}
\vspace{0.5cm}
\caption{Truncation symmetry types of $k$-orbit maps, with $1 \leq k \leq5$.}
%\end{center}
\label{Tr(M)1}
\end{table}
\begin{table}[h!]
\centering
%\vspace{-2cm}
\begin{tabular}{|c|c|c|c|}
\hline
Sym type  & \multicolumn{3}{|c|}{Sym type of $\Le(\ma)$ with}     \\
 of $\ma$  &  $k$ orbits & $\frac{3k}{2}$ orbits &      $3k$-orbits    \\  
\hline
$6_{B_p}$   & $6_{B_p}$  &              ---               &      $18_B$        \\ 
 $6_D$        &    ---           &        $9_A$                 &      $18_D$        \\
 $6_F$        &    ---           &        $9_B$                 &      $18_F$        \\
$6_{G_p}$  & $6_{G_p}$  &              ---               &      $18_G$         \\ 
$6_{H_p}$  & $6_{H_p}$  &              ---               &      $18_H$         \\ 
$6_{J_p}$  & $6_{J_d}$  &              ---                &      $18_J$         \\ 
$6_{M_{opp}}$ & $6_{N_{dp}}$  & $9_{C_d}$         &       $18_M$        \\
$6_{N_{opp}}$  & $6_{F_d}$, $6_{M_{dp}}$  &    ---    &      $18_N$         \\ 
$6_{O_{opp}}$  & $6_{P_{dp}}$  &   ---             &      $18_O$         \\  
$6_{P_{opp}}$  & $6_{O_{dp}}$  &    ---            &      $18_P$         \\ 
$7_{K}$  & $7_{J}$  &              ---               &      $21_K$         \\ 
$7_{L}$  & $7_{J_p}$  &              ---               &      $21_L$         \\  
$9_{C}$  & $9_{B_p}$  &              ---                &      $27_C$         \\ 
$9_{G_{p}}$ & $9_{E_{p}}$  & ---         &       $27_G$       \\
$9_{H_{p}}$  & $9_{A_p}$  &    ---    &     $27_H$         \\ 
$9_{I}$  & $9_{D_{d}}$  &   ---             &      $27_I$         \\  
$9_{J}$  & $9_{E}$  &    ---            &      $27_J$         \\ 
$9_{K}$  & $9_{F}$  &              ---               &      $27_K$         \\ 
$9_{L}$  & $9_{F_p}$  &              ---               &      $27_L$         \\  
\hline
 \end{tabular}
\vspace{0.5cm}
\caption{Truncation symmetry types of $k$-orbit maps, with $6 \leq k \leq7$ and $k=9$.}
\label{Tr(M)}
\end{table}

\newpage
\subsection{Composition of a dual and truncation.}
\label{subsec:Lp}
For a given map $\ma$, the vertices of its truncation map $\Tr(\ma)$ have valency 3. Then, the dual map of $\Tr(\ma)$ is a map with triangular faces. Hence, there is a correspondence between the sets of vertices and faces of $\ma$ with the vertex set of the dual map $(\Tr(\ma))^*$, also known as the {\em two-dimensional subdivision} of $\ma$, \cite{PisanskiRandic}. Its symmetry type graph can easily be founded by exchanging the colours $i$ and $2-i$, with $i \in \{0,1,2\}$, on the edges of the graphs $\mathcal{G}_{\Tr(\ma)}$ and $T(\Tr(\ma))$ mentioned through the past section.

However, if we proceed in the other order and consider the truncation of the dual of a map $\ma^*$, then we produce a map isomorphic to the \emph{leapfrog map} $\Le(\ma)$ of a map $\ma$, \cite{PisanskiRandic,FowlerPisanski,Leapfrog}. In other words, $\Le(\ma) \cong \Tr(\ma^*)$. And this gives a completely different result than the map $(\Tr(\ma))^*$ as one can see below.

\subsubsection{Leapfrog map.}

As we said before, the leapfrog map $\Le(\ma)$ of $\ma$ is isomorphic to the truncation of the dual map $\ma^*$ of $\ma$. One way to construct the leapfrog map $\Le(\ma)$ of the map $\ma$ is by drawing, on the surface, a perpendicular edge to each edge of $\ma$  and joining by an edge the two end points of two edges if the corresponding edges in $\ma$ share a vertex and belong to the same face. 
In this way, we obtain a one-to-one correspondence between the faces of $\Le(\ma)$ and the set of faces and vertices of $\ma$. In Figure \ref{OctLeap} is shown the image of the octahedron after apply the leapfrog operation.
\begin{figure}[htbp]
\begin{center}
%\vspace{-1.5cm}
\includegraphics[width=3cm]{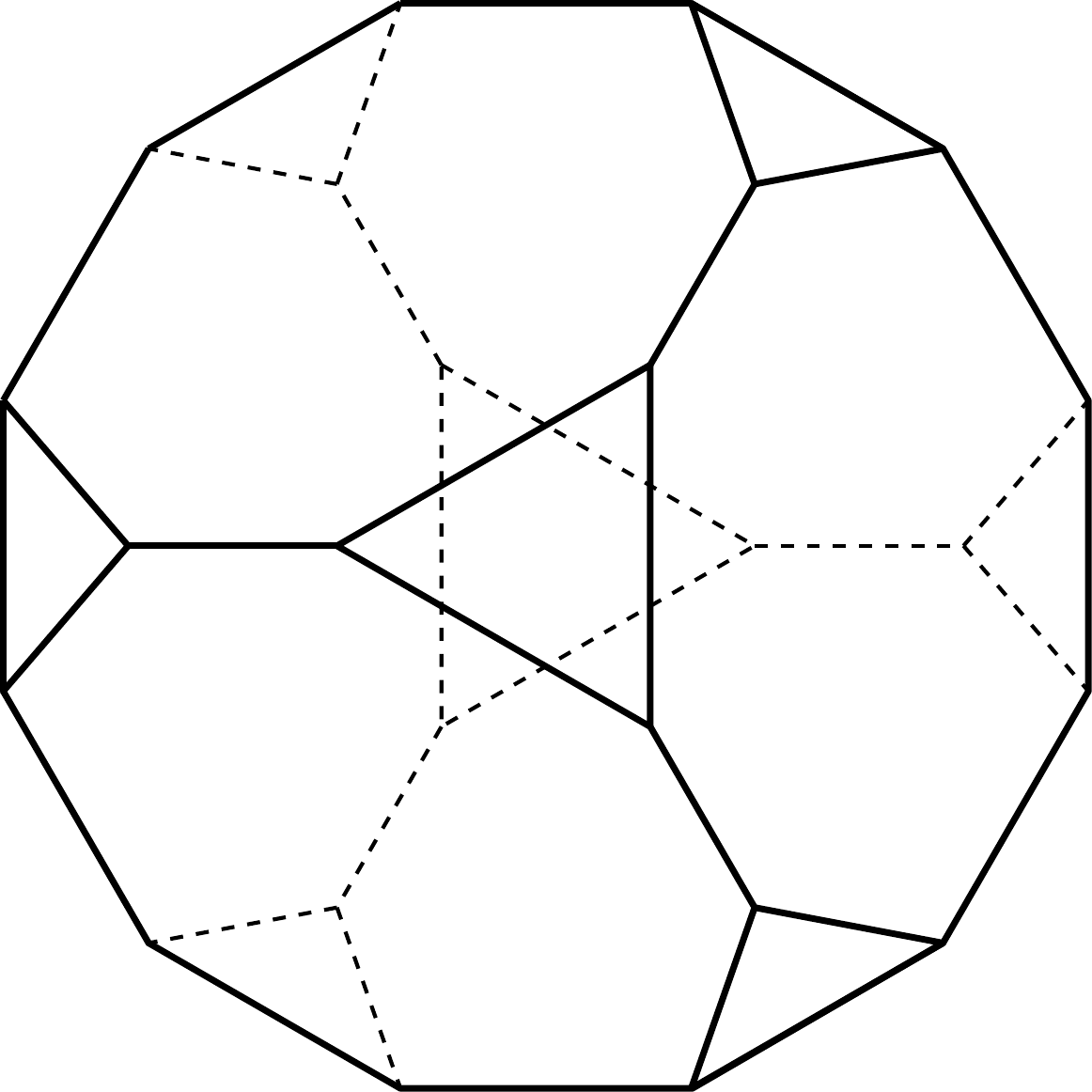}
%\vspace{-2cm}
\caption{Leapfrog image of the octahedron, isomorphic to the truncated cube.}
\label{OctLeap}
\end{center}
\end{figure}

 Notice that all the vertices of the map $\Le(\ma)$ have valence three. The faces of $\Le(\ma)$ that are in correspondence with the faces of $\ma$ remain of the same length, while the faces of $\Le(\ma)$ that correspond to the vertices of $\ma$ are of length two times the valence of its corresponding vertex. It is not hard to see that %if $\ma$ contains $m$ edges, then 
the map $\Le(\ma)$ contains $3|E(\ma)|$ edges. %$2m$ faces...

As is depicted in the Figure \ref{LeapfrogFlags}, every flag in $\fl(\ma)$ is divided into three different flags of the leapfrog map $\Le(\ma)$. Let $\Phi =(\Phi_0,\Phi_1,\Phi_2) \in \fl(\ma)$ be a flag in $\fl(\ma)$, 
\begin{figure}[htbp]
\begin{center}
%\vspace{-1.5cm}
\includegraphics[width=6cm]{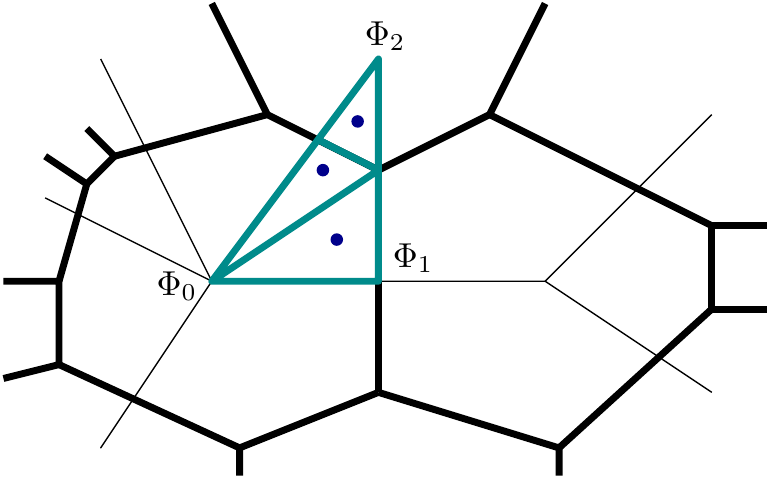}
%\vspace{-2cm}
\caption{The three respective flags of $\fl(\Le(\ma))$ to the flag $\Phi =(\Phi_0,\Phi_1,\Phi_2)  \in \fl(\ma)$.}
\label{LeapfrogFlags}
\end{center}
\end{figure}
then $(\Phi,0):=(\{\Phi_1,\Phi_2\},\Phi_1,\Phi_0\}$, $(\Phi,1):=\{\{\Phi_1,\Phi_2\},\{\Phi_0,\Phi_2\},\Phi_0\}$ and $(\Phi,2):=\{\{\Phi_1,\Phi_2\},\{\Phi_0,\Phi_2\},\Phi_2\}$ denote the three corresponding flags of $\Phi$ in $\fl(\Le(\ma))$. The adjacency between these is given as follows. 
\begin{align*}
(\Phi,0)\cdot r'_0&=(\Phi^{s_2},0), & (\Phi,0)\cdot r'_1&=(\Phi,1),             & (\Phi,0)\cdot r'_2&=(\Phi^{s_0},0); \\
(\Phi,1)\cdot r'_0&=(\Phi^{s_1},1), & (\Phi,1)\cdot r'_1&=(\Phi,0),             & (\Phi,1)\cdot r'_2&=(\Phi,2); \\
(\Phi,2)\cdot r'_0&=(\Phi^{s_1},2), & (\Phi,2)\cdot r'_1&=(\Phi^{s_0},2), & (\Phi,2)\cdot r'_2&=(\Phi,1).
\end{align*}
Note that once again $r'_0$, $r'_1$ and $r'_2$ depend of the adjacency between the flags in $\fl(\ma)$. Thus, we also present the algorithm shown in Figure \ref{LeapfrogAlg} to construct, from $\gr$, the flag graph of $\Le(\ma)$. 

\begin{figure}[htbp]
\begin{center}
%\vspace{-1.5cm}
\includegraphics[width=10cm]{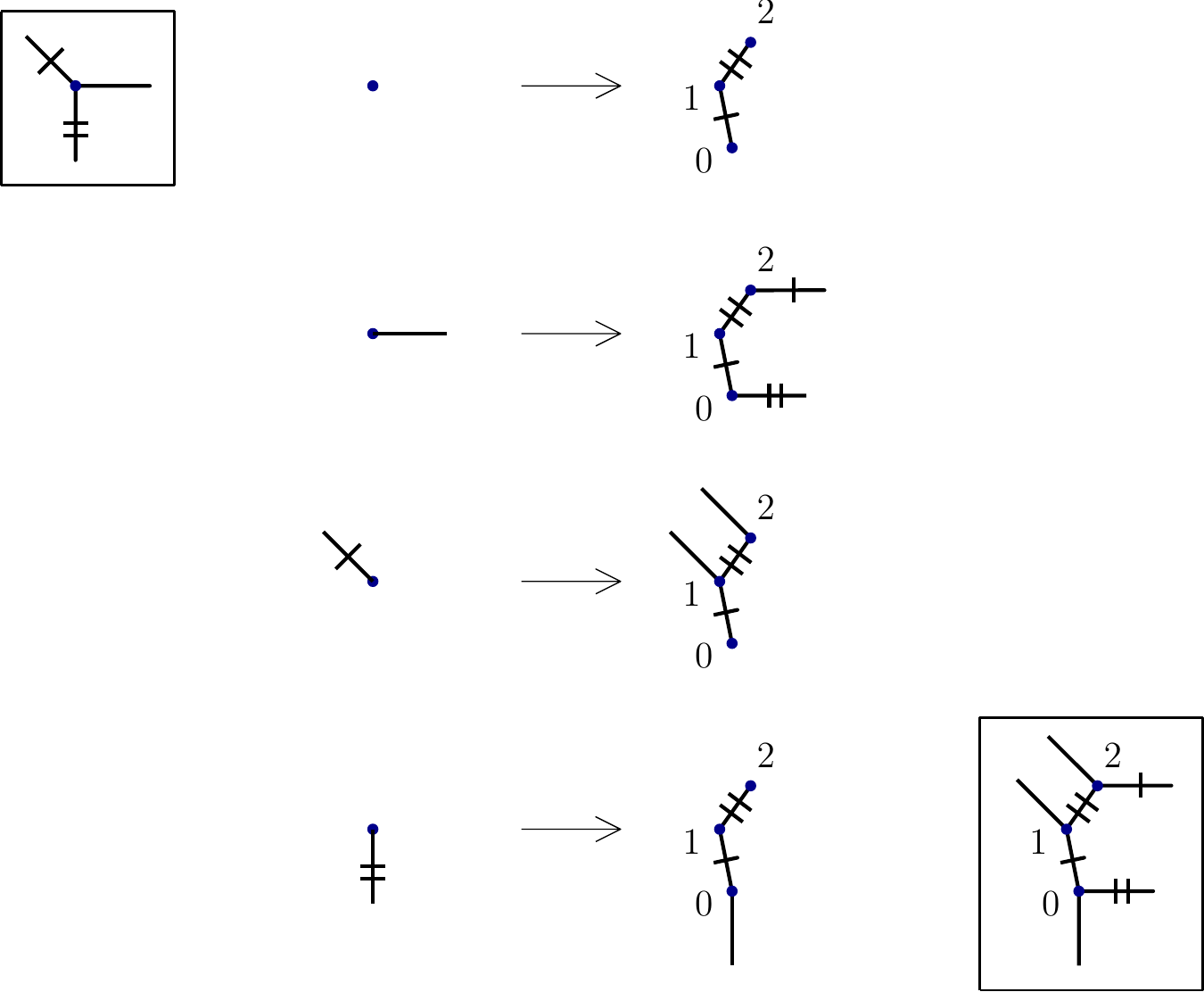}
%\vspace{-2cm}
\caption{Local representation of a flag in $\gr$, in the left. The image under the leapfrog operation, locally obtained, in the right.}
\label{LeapfrogAlg}
\end{center}
\end{figure}

Recall that the flag graph $\gr$ of a $k$-orbit map $\ma$ which truncation map $\Tr(\ma)$ is a $k$-orbit or a $\frac{3k}{2}$-orbit map, can be quotient into a graph isomorphic to $2_{01}$, \cite{k-orbitM}. Meaning that there exist a bipartition $(A,B)$ on the vertices of $\gr$ such that each vertex in the partition $A$ is adjacent to a vertex in the partition $B$ by and edge of colour 2. In addition, recall that there is a bijection $\delta: \fl(\ma) \to \fl(\ma^*)$ such that for each $\Phi \in \fl(\ma)$ and each $i \in \{0,1,2\}$, $\Phi^i\delta=(\Phi\delta)^{2-i}$. Since $\Le(\ma) \cong \Tr(\ma^*)$, we shall say that the flag graph $\gr$ of a $k$-orbit map $\ma$ such that its leapfrog map is a $k$-orbit or a $\frac{3k}{2}$-orbit map, can be quotient in to a graph isomorphic to $2_{12}$, i.e. there exist a bipartition $(A',B')$ on the vertices of $\gr$ such that each vertex in the partition $A'$ is adjacent to a vertex in the partition $B'$ by and edge of colour 0. 

However, the flag graph $'\mathcal{G}_{\Le(\ma)}$ of the leapfrog map $\Le(\ma)$, of any map $\ma$, cn be quotient into a graph isomorphic to $3^0$, with the corresponding partition $(\A_2,\A_1,\A_0)$ of the vertices of the flag graph $\mathcal{G}_{\Le(\ma)}$, in such way that for each vertex $\Psi\in\fl(\Le(\ma))$ in the partition $\A_2$, the vertices $\Psi^2$ and $\Psi^{2,1}$ correspond to the partitions $\A_1$ and $\A_2$, respectively. Thus, the assembling these flags we obtain a new flag $\Phi_{\Psi}:=\{\Psi,\Psi^2,\Psi^{2,1}\}\in\fl(\ma)$, where the face $\Psi_2$ will be considered as an element of $F(\ma)$.
 
Therefore, by the results on truncation, shown in the Tables \ref{Tr(M)1} and \ref{Tr(M)}, we can obtain the classification given in the Tables \ref{Le(M)1} and \ref{Le(M)} where are listed the symmetry types that the map $\Le(\ma)$ can have.

\begin{table}[h!]
\centering
%\vspace{-1cm}
\begin{tabular}{|c|c|c|c|}
\hline
Sym type  & \multicolumn{3}{|c|}{Sym type of $\Le(\ma)$ with}     \\
 of $\ma$  &  $k$ orbits & $\frac{3k}{2}$ orbits &      $3k$-orbits    \\  
\hline
        1        &       1          &              ---                  &        $3^0$        \\ 
        2        &       2          &              ---                  &  $6_{N_d}$       \\
    $2_0$    & $2_{12}$   &              ---                  & $6_{M_d}$       \\
    $2_2$    &      ---         &              ---                  &   $6_G$             \\
    $2_1$    &      ---         &              ---                  &  $6_{O_d}$       \\ 
 $2_{01}$  &      ---         &              ---                  &    $6_{P_d}$     \\
 $2_{12}$  &  $2_0$       &           $3^0$              &   $6_B$              \\ 
 $2_{02}$  &      ---         &              ---                  &   $6_H$             \\
   $3^0$     &      ---         &              ---                  &  $9_B$             \\
   $3^2$     &      ---         &              ---                  &  $9_A$             \\
 $3^{02}$  & $3^{02}$  &              ---                  & $9_{C_d}$             \\
$4_{B_d}$  &     ---          &        $6_{P_d}$          &      $12_B$        \\
$4_{C_d}$  &     ---          &        $6_{O_d}$         &      $12_C$        \\
$4_{D_p}$ & $4_{D_p}$ &               ---                &      $12_D$        \\
 $4_{E_d}$ & $4_D$        &               ---                &      $12_E$        \\ 
 $4_G$        &   $4_G$     &                ---               &      $12_H$        \\
$4_{G_d}$  &     ---          & $6_{N_d}$, $6_{M_d}$ & $12_G$        \\
$4_{H_d}$  &     ---         &  $6_G$, $6_H$            &      $12_C$        \\      
$5_{B_p}$  & $5_{B_d}$  &               ---               &      $15_A$        \\
 \hline
 \end{tabular}
\vspace{0.5cm}
\caption{Leapfrog symmetry types $k$-orbit maps, with $1 \leq k \leq 5$.}
%\end{center}
\label{Le(M)1}
\end{table}
\begin{table}[h!]
\centering
%\vspace{-2cm}
\begin{tabular}{|c|c|c|c|}
\hline
Sym type  & \multicolumn{3}{|c|}{Sym type of $\Le(\ma)$ with}     \\
 of $\ma$  &  $k$ orbits & $\frac{3k}{2}$ orbits &      $3k$-orbits    \\  
\hline
$6_{B_p}$  & $6_{B_p}$  &              ---                 &      $18_B$        \\ 
$6_{D_d}$  &    ---           &        $9_A$                 &      $18_D$        \\
$6_{F_d}$  &    ---           &        $9_B$                 &      $18_F$        \\
$6_{G_p}$  & $6_{G_p}$  &              ---                 &      $18_G$         \\ 
$6_{H_p}$  & $6_{H_p}$  &              ---                &      $18_H$         \\ 
$6_{J_p}$  & $6_{J_d}$  &              ---                  &      $18_J$         \\ 
$6_{M_{dp}}$ & $6_{N_{dp}}$   & $9_{C_d}$          &       $18_M$        \\
$6_{N_{dp}}$  & $6_{F_d}$, $6_{M_{dp}}$  &    ---                &      $18_N$         \\ 
$6_{O_{dp}}$  & $6_{P_{dp}}$  &    ---                &      $18_O$         \\  
$6_{P_{dp}}$  & $6_{O_{dp}}$  &    ---                &      $18_P$         \\ 
$7_{K_d}$  & $7_{J}$  &              ---               &      $21_K$         \\ 
$7_{L_d}$  & $7_{J_p}$  &              ---               &      $21_L$         \\  
$9_{C_d}$  & $9_{B_p}$  &              ---                &      $27_C$         \\ 
$9_{G_{pd}}$ & $9_{E_{p}}$  & ---         &       $27_G$       \\
$9_{H_{pd}}$  & $9_{A_p}$  &    ---    &     $27_H$         \\ 
$9_{I_d}$  & $9_{D_{d}}$  &   ---             &      $27_I$         \\  
$9_{J_d}$  & $9_{E}$  &    ---            &      $27_J$         \\ 
$9_{K_d}$  & $9_{F}$  &              ---               &      $27_K$         \\ 
$9_{L_d}$  & $9_{F_p}$  &              ---               &      $27_L$         \\  
 \hline
 \end{tabular}
\vspace{0.5cm}
\caption{Leapfrog symmetry types $k$-orbit maps, with $6 \leq k \leq7$ and $k=9$.}
%\end{center}
\label{Le(M)}
\end{table}

\section{Conclusion and acknowledgements}
As in \cite{MedSymTypeGph}, the symmetry type graph is used to find the possible symmetry types of the medial $k$-orbit map, with $k\leq5$. 
Here, we complete a classification of possible symmetry types for the truncation and leapfrog of $k$-orbit maps for $k\leq7$ and $k=9$, extending the results in \cite{k-orbitM}, using the symmetry type graph of a map as a tool. 
Hence, with the help of the symmetry type graph, we can continue classifying $k$-orbit maps with their image under other operations on maps.

The author would like to thank  Isabel Hubard and Toma\v{z} Pisanki for many valuable discussions, as well as their support and orientation for the completion of this work.
 I further acknowledge the support from the Slovenian Research Agency (ARRS) for the scholarship granted for my PhD.

%\newpage

\end{document}